\journalname{manuscript}
\newcommand{\subfloat}{\subfigure}
\newcommand{\T}{\mathcal{T}}
\newcommand{\LP}{l\mathcal{P}}
\newcommand{\LF}{l\mathcal{F}}
\newcommand{\I}{\mathcal{I}}
\newcommand{\Oc}{\mathcal{O}}
\newtheorem{ass}{Assuption}
\newtheorem{fact}{Fact}
\begin{document}
\large

\title{Computing Overlaps of Two Quadrilateral Mesh Of the Same Connectivity \thanks{The project is supported by the National Natural Science Foundation of China (NSFC No. 11501044,11571047, 11571002).
}
}

\titlerunning{ Overlaps of Two Quadrilateral Mesh}

\author{
Xihua Xu \and Shengxin Zhu
}


\institute{
Shengxin Zhu \at
             Laboratory of Computational Physics \\ Institute of Applied Physics and Computational Mathematics. \\
             P.O.Box 8009, Beijing 10088, China.
              \email{zhu\_shengxin@iapcm.ac.cn}           
}

\date{ \today }

\maketitle

\begin{abstract}

An exact conservative remapping scheme requires overlaps between two tessellations and a reconstruction scheme on the old cells (Lagrangian mesh). While the are intensive discussion on reconstruction schemes, there are relative sparse discussion on how to calculate overlaps of two grids. Computing the exact overlaps was believed complicated and often be avoided. This paper introduces the mathematical formulation of such a problem and tools to solve the problem. We propose methods to calculate the overlaps of two dismissable general quadrilateral mesh of the same logically structure in a planar domain. The quadrilateral polygon intersection problem is reduced to as a problem that how an edge in a new mesh intersects with a local frame which consists at most 7 connected edges in the old mesh. As such, locality of the method is persevered. The alternative direction technique is applied to reduce the dimension of the searching space. It reduces more than 256 possible intersections between a new cell with the old tessellation to 34 (17 when considering symmetry) programmable intersections between an edge and an local frame whenever the intersection between the old and new cell does not degenerate. At the same time, we shall how the computational amount of the overlaps depends on the underlying problem in terms of singular intersection points. A simple and detailed classification on the type of overlaps is presented, according to classification, degeneracy of an overlap can be easily identified.

\keywords{ALE \and remapping \and intersection \and quadrilateral mesh}
\subclass{52C05 \and 52C35 \and 52C45 \and 68U99}
\end{abstract}

\section{Introduction}
Consider the following problem: suppose $\rho \in L_1(\Omega)$ is an unknown function, given observations of the form
$\int \varphi_i^a \rho \, d x = m_i^a$, where $\{ \varphi_i \}_{i=1}^N$ is a set of independent functions with the property of partition of unity(POU),
i.e., $\sum_{i=1}^N\varphi_i^a=1$, we want to estimate the value
$m_i^b=\int \varphi_j^b \rho \, dx$ with respect to a different set of independent functions $\{ \varphi_j^b\}_i^M$.
If we require $f$ to have a better regularity, then the classical pointwise interpolation problem is just a special case of such a formulation by setting $\varphi_i^a=\delta(x-x_i)$ and $\varphi_j^b=\delta(x-y_j)$ for two different point sets $\{ x_i\}_{i=1}^N$ and $\{ y_j\}_{j=1}^{M}$, where $\delta(\cdot)$ is the Dirac delta function.

In this paper, we focus on the case that $\varphi_i^a=\chi(T_i^a)$ is the characteristic function for $T_i^a$ and $T_i^a$ is an element of a tessellation/mesh $\T^a=\{T_1^a,\ldots,T_N^a \}$ of $\Omega \subset R^d$. Similarly, $\varphi_i^b=\chi(T_i^b)$ is the characteristic function on $T_i^b$, and $T_i^b $ is an element of another tessellation $ \T^b= \{ T_1^b, \ldots, T_M^b\}$ of $\Omega$. Such a generalised interpolation problem arises when it is required to transfer information from the old tessellation to a new one. For example, in the Arbitrary Lagrangian-Eulerian(ALE) method \cite{H74,L05}, or a moving mesh method, when the old tessellation becomes poor or severely distorted, one has to rezoning the tessellation and to map physical data on an old tessellation to a new one. The process of transfer the physical variables based on the observations $m_i^a=\int \varphi_i^a \rho \,dx$, $i=1,\ldots,N$, on an old mesh $\T^a$ to $m_i^b=\int \varphi_i^b \rho dx$, $i=1,\ldots,M$, on a new mesh $\T^b$, is referred to as \textit{remapping}.

As one of the three main steps in the ALE framework, remapping has kept researchers busy in the past decades. Currently there are two main streams of remapping schemes: one is the \textit{Cell-Intersection-Based Donor Cell(CIB/DC)}. Such an approach is conceptually simple; the information $\int \varphi_i^b \rho dx $ is aggregated from intersections between old cells and the new cells:
\begin{linenomath}
\begin{equation}
\int \varphi_i^b \rho dx = \sum_{j}\int_{\sup \varphi_i^b  \cap \sup \varphi_j^a} \varphi_j^a \rho dx.    \label{eq:ai}
\end{equation}
\end{linenomath}
Once this information is aggregated exactly, then the conservative property of the generalised interpolation follows due to the property of partition of unity:
\begin{linenomath}
$$
\sum_{i}^N \int_\Omega \varphi_i^a \rho dx = \int \sum_{i=1}^N \varphi_i^a \rho dx = \int_{\Omega} \rho dx =\int_{\Omega } \sum_{j=1}^M \varphi_j^b \rho dx = \sum_{j=1}^M \int_\Omega \varphi_j^b \rho dx.
$$
\end{linenomath}
Such an approach based on set or support intersection operation is also referred to as \textit{aggregated intersection Based Donor Cell(AIB/DC)} method.
The remapping error depends only on the approximation/reconstruction of the density function on the old cells whenever the integral over the overlays between the old and new cells is computed exactly. Precisely, piecewise constant reconstruct of the density function on the old cell grantees first order accuracy and convergence and piecewise linear construction grantees second order accuracy according to the standard approximation theory. Dukowicz in \cite{D84} considered applying the Gauss divergence theory to calculate the overlap(s) between an old cell and a new one. His procedure requires to transform a physical general quadrilateral meshes to the standard regular grid. Ramshaw in \cite{Ramshaw1985,Ramshaw1986} improves Dukowicz' result, the improved method works on the physical mesh directly without any coordinate transform. A first order three dimensional remapping scheme based on intersecting arbitrary polyhedra was consider by Grandy \cite{grandy1999} and Azarenok \cite{Azarenok2009}. These approaches are first order methods. Dukowicz pointed out that the first order CIB/DC based remmapping scheme is equivalent to upwind difference of the advective term  of a continuous equation \cite[p.412]{D84} \footnote{It originally reads "It can be easily shown that the continuous rezone with constant cell density corresponds to donor cell(or upwind) differencing of the advective terms" }. In \cite{dukowicz2000}, Dukowicz and Baumgardner articulated this equivalence. Second order CIB/DC methods were constructed by using piecewise linear reconstruction of the density function on the old cells \cite{dukowicz1987accurate}.   When the mass in a new cell are calculated by mass transfer from an old cell through the overlapped regions, the CIB/DC approach is also referred to as the \textit{flux-intersection-based} approach (generalised flux in fact). This approach surfers from programming difficulties as we shall demonstrate, but it works for all kinds of tessellations. In particular, it is still used when \textit{mixed cells}(cells contains several materials) appear \cite{B11}.

 The other method is the \textit{Faced-Based Donor-Cell(FB/DC) method} or the \textit{swept-region based methods}. In such an approach, the mass transfer between \textit{a home/departing cell} and a \textit{target/arriving cell} is calculated by surface integral(line integral in $R^2$) according to the Green formula or the divergence theory. The transfer is based on the \textit{fluxing area or swept region} between the arriving/target cell and neighbouring cells of the home cell. Dukowicz and Baumgardner shows how the mass exchanges between a home cell and a target cell \cite{dukowicz2000}. In such an approach, there is flexibility to approximate the the transferred mass, the mass in the swept region is not necessarily calculated exactly, simplification and neat approximation to the flux bring efficient and high order methods, see \cite{L11}\cite{margolin03} for details. And it is easier to generalize into three dimensional space compared with the CIB/DC approach. See \cite{Garimella2007} for example. Such an approach is also referred to the \textit{flux-based/face-based} method. It is valid when the Lagrangian and rezoned mesh have the same connectivity and are close to each other.

  The FB/DC approach is simpler for the CIB/DC methods due to the \textit{degeneracy of the intersections}. Increasing the robustness of a CIB/DC methods requires a special procedure to handle such degeneracies.  Some authors believe the CIB/DC approach is very complicated. This argument is basically true but is mathematical imprecise, because as we shall see similar degeneracy also arise for the FB/DC method. For both method, neglecting such degeneracy will result flawed procedure. The ability to handle such degeneracy decides the robustness of a remapping procedure.

  While the techniques vary between the two class methods, the essence of these mapping schemes is to compute or approximate the transfer of mass between a \textit{home cell or departing cell} in the old tessellation and a \textit{target cell or arriving cell} in the new tessellation. In the CIB/DC approach, this mass transfer is equivalent to computing the integration/quadrature on the intersections. Calculating the cell intersections and a proper quadrature rule (in terms of volumetric or surface integral) severs as the mathematical foundation of this approach. In the FB/DC method, calculating the \textit{fluxing/swept area} between the old faces and a good reconstruction on the faces and vertices is critical. Since the density function $\rho$ is usually approximated by piecewise functions on the old tessellation. Lower regularity appears on the faces of the old tessellation. Therefore, accurate computing the remapped mass or other physical variables requires summarize the underlying quantity piecewisely on the overlapped regions in the old tessellation to avoid singularity. That is why an exact remapping scheme requires to calculate the intersections or the swept/fluxing area between the old and new mesh. We shall mention that calculating or approximation the overlapped regions between the old and new tessellations and a reconstruction scheme (for the density function or flux) are two aspects of an remapping scheme. There are intensive discussion on reconstruction scheme, see \cite{Cheng2008,ms03,Z11} and reference therein, while there are relative sparse discussion on how to calculate the overlapped regions except a couple of noticeable examples \cite{Azarenok2009}, \cite{grandy1999} and \cite{dukowicz2000}.

 The aim of this paper is to articulate the mathematical formulation of these problems, present detailed classification of intersection types between a quadrilateral and another, and provide simple method to calculate the intersections between two quadrilateral mesh of the same connectivity based on the classification. The classification is simper and clearer than that in \cite[p.327, Table I]{dukowicz2000}. According the classification, one can quickly identify a degeneracy when it may arise. This helps to develop a robust software. When there is non degeneracy of the overlapped region, we demonstrate that the present approach only requires 34 programming cases(17 when considering symmetry), while the CIB/DC approach used as in \cite{Ramshaw1985,Ramshaw1986} requires 98 programming cases(Neither Dukowicz nor Ramshaw gave this quantitative result, the number is derived from Fig.~\ref{fig:cswap}, Fig.~\ref{fig:branch} in this paper  and Fig.1 in \cite{Ramshaw1986}).  When consider degeneracy, more benefit can be obtained. We shall demonstrates how the degeneracy of the intersection depends the so called \textit{singular intersection points}, and how the computational amount depends on the singular intersection points. As far as we known this is the first result on how the computational complexity depends on an underlying problem.

The paper is enlightened by the fact that calculating the intersection of a line and a polygon only takes $O(\log(p))$ complexity \cite[p.2, Fig.1]{CD87}, where $p$ is the number of vertices of a polygon. When considering how a continues piecewise linear curve intersects with a mesh, the computations can be further reduced. In either the CIB/DC approach or the FB/DC approach, we will reduce cell intersections or the swept region to a case that an edge/face intersects with a \textit{local frame}(see Definition \ref{def:lf}). By focusing on how an edge intersects with old cells and exploring the structure of the mesh,  the intersection area between the new and old tessellation, and the union of the fluxing/swept area can be traveled in $\mathcal{O}(n)$ time (Theorem \ref{thm:1}),where $n$ is the number of elements in the underlying mesh or tessellation. Our starting point is a direct cell-intersection formula based on purely set operation. It is conceptually simple and serves as the basis for the proposed algorithms. In parallel, we shall show the basic formula for the FB/DC method based on the \textit{fluxing/swept region}. It is derived from the Green formula. Here are some preliminary for presenting our results.

\

\section{Preliminary}

A \textit{tessellation} $\T=\{T_1,T_2,\ldots,T_N\}$ of a domain $\Omega$ is a partition of $\Omega$ such that  $\bar{\Omega} =\cup_{i=1}^N T_i$  and $\dot{T}_i \cap \dot{T}_j =\emptyset $, where $\dot{T}_i$ is the interior of the \textit{element} or \text{cell} $T_j$ and $\bar{\Omega}$ is the closure of $\Omega$.
A \textit{admissible} quadrilateral tessellation has no hanging node on any edge of the tessellation. Precisely, if $T_i\cap T_j \neq \emptyset$, then $T_i $ and $T_j$ can only share a common vertex or a common edge. We shall also refer to such a tessellation as a quadrilateral mesh.

A admissible quadrilateral tessellation can be indexed only by its vertices.  We use the conversional notation  as follows. The first three terms are identical to that used in \cite{margolin03}
 \begin{itemize}
 \item  $P_{i,j}$, for $i=1:M, j=1:N$ are the vertices;
 \item   $F_{i+ \frac{1}{2},j }$ for $i=1:M-1,j=1:N$  and $F_{i, j+\frac{1}{2}}$ for $i=1:M,j=1:N-1$ are the edges with vertices $P_{i,j}$ and $P_{i+1,j}$;
 \item  $C_{i+\frac{1}{2},j+\frac{1}{2}}$ stands for the quadrilateral cell $P_{i,j} P_{i,j+1} P_{i+1,j+1}P_{i,j+1}$ for $1\le i \le M-1$ and $1\le j \le N+1$. $C_{i+\frac{1}{2},j+\frac{1}{2}}$ is an element of $\T$, we shall also referred to it as $T_{i,j}$ with integer subindex for convenience;
\item $x_i(t)$ is the piecewise curve which consists all the face of $F_{i,j+\frac{1}{2}}$ for $j=1:N-1$, $y_j(t)$ is the piece wise curve which consists all the faces of $F_{i+\frac{1}{2},j}$ for $i=1:M-1$.
 \end{itemize}

Let $\T^a$ and $\T^b$ be two admissible quadrilateral mesh. The vertices of $\T^a$ and $\T^b$ are denoted as $P_{i,j}$ and $Q_{i,j}$ respectively, if there is a one-to-one map between $P_{i,j}$ and $Q_{ij}$, we shall say that the two tessellation share the same \textit{logical structure} or \textit{connectivity}.
Here we shall also assume the two admissible of the same logical structure of the same domain have the following property:
\begin{ass}[A1]
For the two admissible quadrilateral mesh $\T^a$ and $\T^b$, each vertex $Q_{ij}$ of $\T^b$ can only lie in the interior of the union of the cells  $C^a_{i\pm \frac{1}{2},j\pm \frac{1}{2} }$ of $\T^a$.
\label{ass:1}
\end{ass}

\begin{ass}[A2]
For the two admissible quadrilateral mesh $\T^a$ and $\T^b$, each curve $x_i^b(t)$ has at most one intersection point with $y_j^a(t)$, so does for $y_j^b(t)$ and $x^a_i(t)$.
\label{ass:2}
\end{ass}
\begin{ass}[A3]
The intersection point of an new face and an old face lies in the middle of the two faces.
\end{ass}

 For convenience, we also introduce the following notation and definition.  They will be frequently refereed to in the remaining of this paper.
\begin{definition}
A local patch $\LP_{i,j}$ of an admissible quadrilateral mesh consists an element $ T_{i,j}$ and its neighbours in $\T$. Take an interior element of $T_{i,j}$ as example,
\begin{linenomath}
 \begin{equation}
 \LP_{i,j}:=\{ T_{i,j}, T_{i\pm1,j}, T_{i,j\pm1}, T_{i\pm1,j\pm1} \}.
 \end{equation}
 \end{linenomath}
The index set of $\LP_{i,j}$ are denoted as
\begin{linenomath}
\begin{equation}
\mathcal{J}_{i,j}=\{ (k,s): T_{k,s} \in \LP_{i,j} \}.
\end{equation}
\end{linenomath}
\end{definition}
The local patch is similar to the notation $ \mathscr{C_i}= \cup_k C_k$ such that $\tilde{C}_i \in \mathscr{C}(C_i)$ in previous publication like \cite[eq.2.1]{ms03} and \cite[eq.1]{B11}, which is the smallest patch in the old tessellation which contains $C_i$, and it depends on the how a new cell intersect with the old cells. Here, the local patch is a fixed patch and is independent of how the cell intersects.  Under the Assumption~\ref{ass:1}, $\mathscr{C}(T_{i,j}) \subset \LP_{i,j}$. The introduce of a local patch for quadrilateral mesh is for programming simplicity.

Let $\T^a$ and $\T^b$ be two admissible quadrilateral mesh of the same domain with the Assumption~\ref{ass:1}.

\begin{definition}
An \textit{invading set} of the element $T_{i,j}^b$ with respect to the local patch $\LP^a_{i,j}$ is defined as
\begin{linenomath}
 \begin{equation}
 \mathcal{I}_{i,j}^b=(T^b_{i,j} \cap \LP^a_{i,j}) \backslash (T^b_{i,j}\cap T^a_{i,j}). \label{eq:I}
 \end{equation}
 \end{linenomath}
\end{definition}
\begin{definition}
 An \textit{occupied set} of the element $T^a_{i,j}$ with respect to the local patch $\LP^b_{i,j}$ is defined as
 \begin{linenomath}
 \begin{equation}
 \Oc_{i,j}^a=(T^a_{i,j} \cap \LP^b_{i,j}) \backslash (T^a_{i,j}\cap T^b_{i,j}). \label{eq:O}
 \end{equation}
 \end{linenomath}
\end{definition}
\begin{definition}
A \textit{swept area or fluxing area} is the area which is enclosed by a quadrilateral polygon with an edge in $\T^a$ and its counterpart edge in $\T^b$. For example, the swept area enclosed by the quadrilateral polygon with edges $F^a_{i+\frac{1}{2},j}$ and $F^b_{i+\frac{1}{2},j}$ will be denoted as $\partial F_{i+\frac{1}{2},j}$.  $\partial^{b+} F_{k,s}$ stands for boundary of the fluxing/swept area $\partial F_{k,s}$ is ordered such that direction of edges of the cell $T^b_{i,j}$ are counterclockwise in the cell $T^b_{i,j}$. Precisely
\begin{linenomath}
\begin{align*}
\partial^{b+}F_{i+\frac{1}{2},j}  &=Q_{i,j}Q_{i,j+1}P_{i,j+1}P_{i,j}, \\
\partial^{b+} F_{i+1, j+\frac{1}{2}}&=Q_{i+1,j}Q_{i+1,j+1}P_{i+1,j+1}P_{i+1,j}, \\
\partial^{b+}F_{i+\frac{1}{2},j+1} &=Q_{i+1,j+1}Q_{i,j+1}P_{i+1,j+1}P_{i,j+1},  \\
\partial^{b+} F_{i, j+\frac{1}{2}} &=Q_{i,j+1}Q_{i,j}P_{i,j}P_{i,j+1}.
\end{align*}
\end{linenomath}
where ${Q_{i,j}Q_{i+1,j}Q_{i+1,j+1}Q_{i,j+1}}$ are list in the counterclockwise order.
 \end{definition}
The invading set $\I^b_{i,j}$ has no interior intersection with the occupied set $\Oc^a_{i,j}$, while the fluxing areas associated to two connect edges of can be overlapped.
The invading and occupied sets consist of the whole intersection between an old cell and a new cell, while corners of a fluxing area may only be part of an intersection between an old cell and a new cell.
Fig.~\ref{fig:IO}, Fig.~\ref{fig:swepta} and Fig.~\ref{fig:swept2} illustrate such differences. In a local patch, the union of the occupied and invading set is a subset of the union of the swept/fluxing area of a home cell $T^a_{i,j}$. However, the difference (extra corner area), if any,  will be self-canceled when summing all the \textit{signed} fluxing area, see the north-west and south-east color region in Fig.~\ref{fig:swepta} and Fig.~\ref{fig:swept2}.

\begin{figure}[!t]
\centering
\subfloat[\label{fig:IO}]{\includegraphics[width=0.22\textwidth]{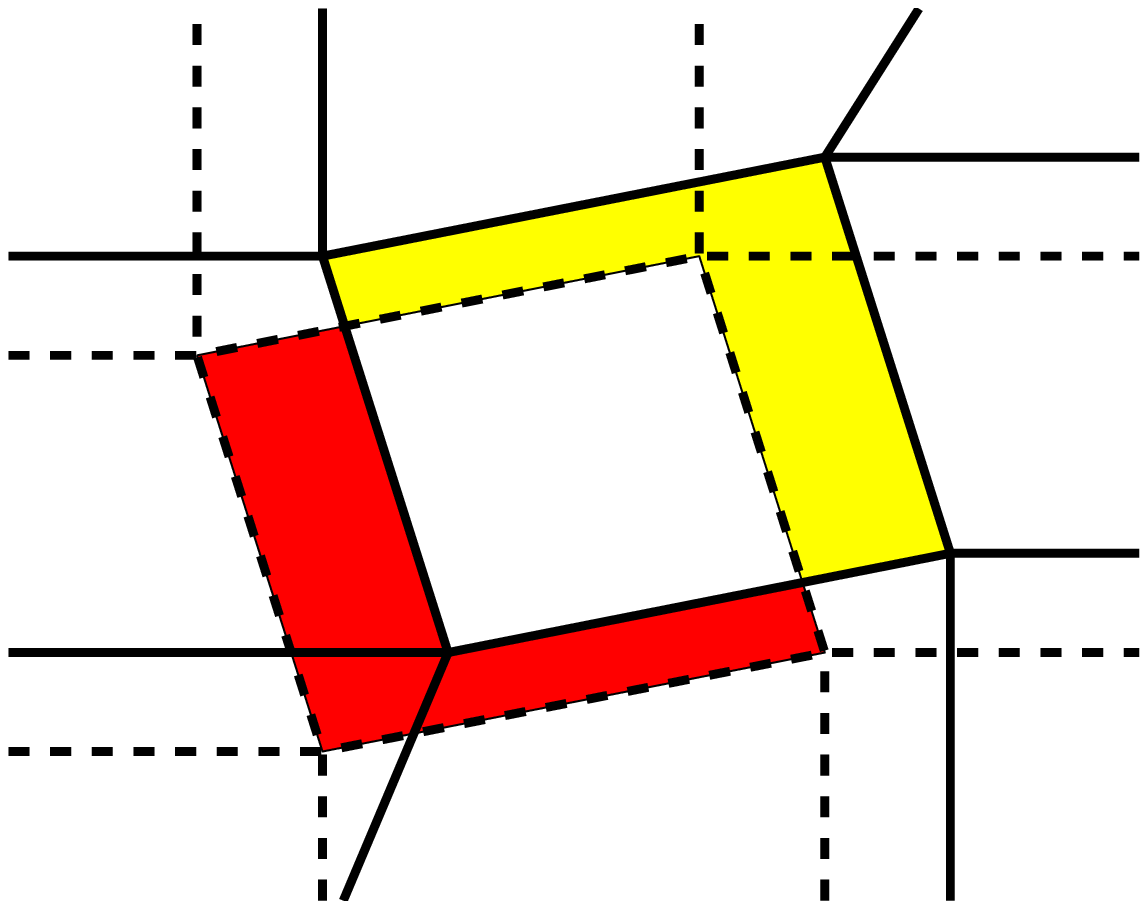}}
\subfloat[\label{fig:swepta}]{\includegraphics[width=0.22\textwidth]{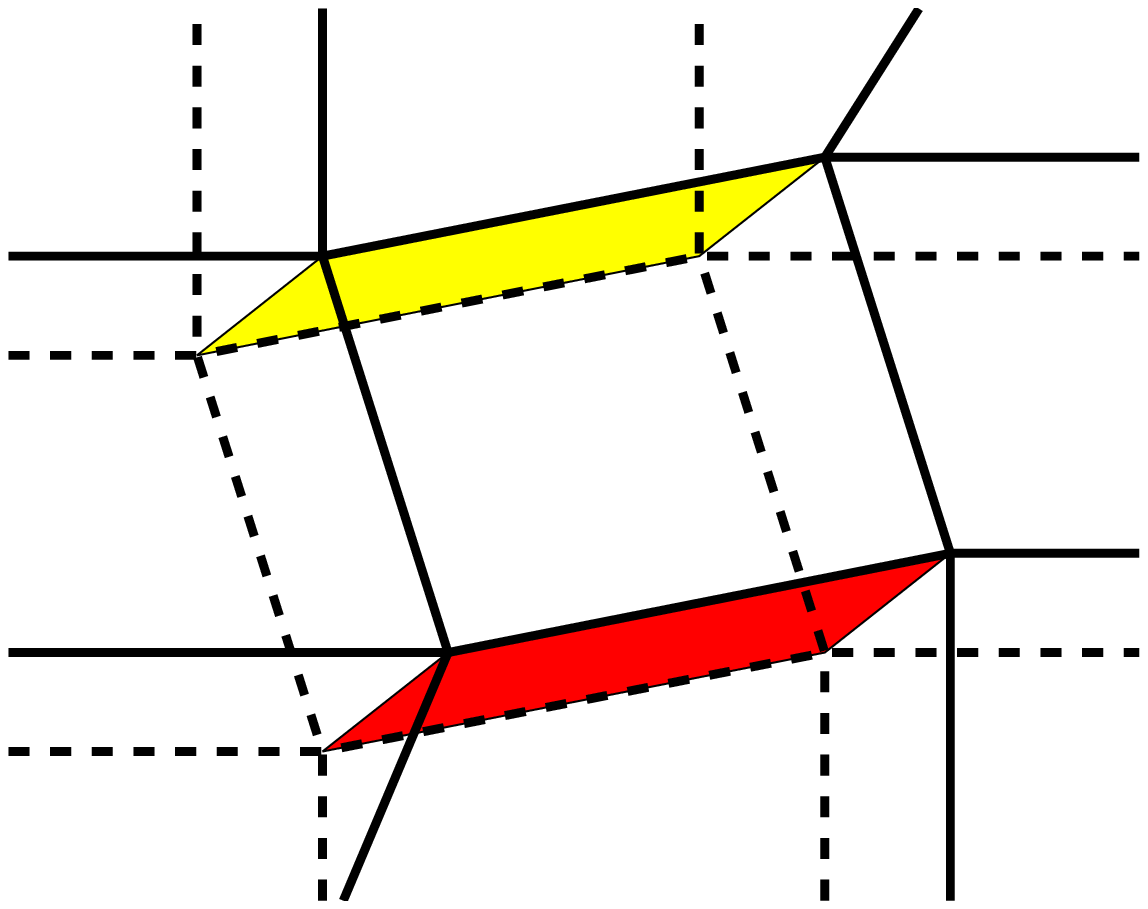}}
\subfloat[\label{fig:swept2}]{\includegraphics[width=0.22\textwidth]{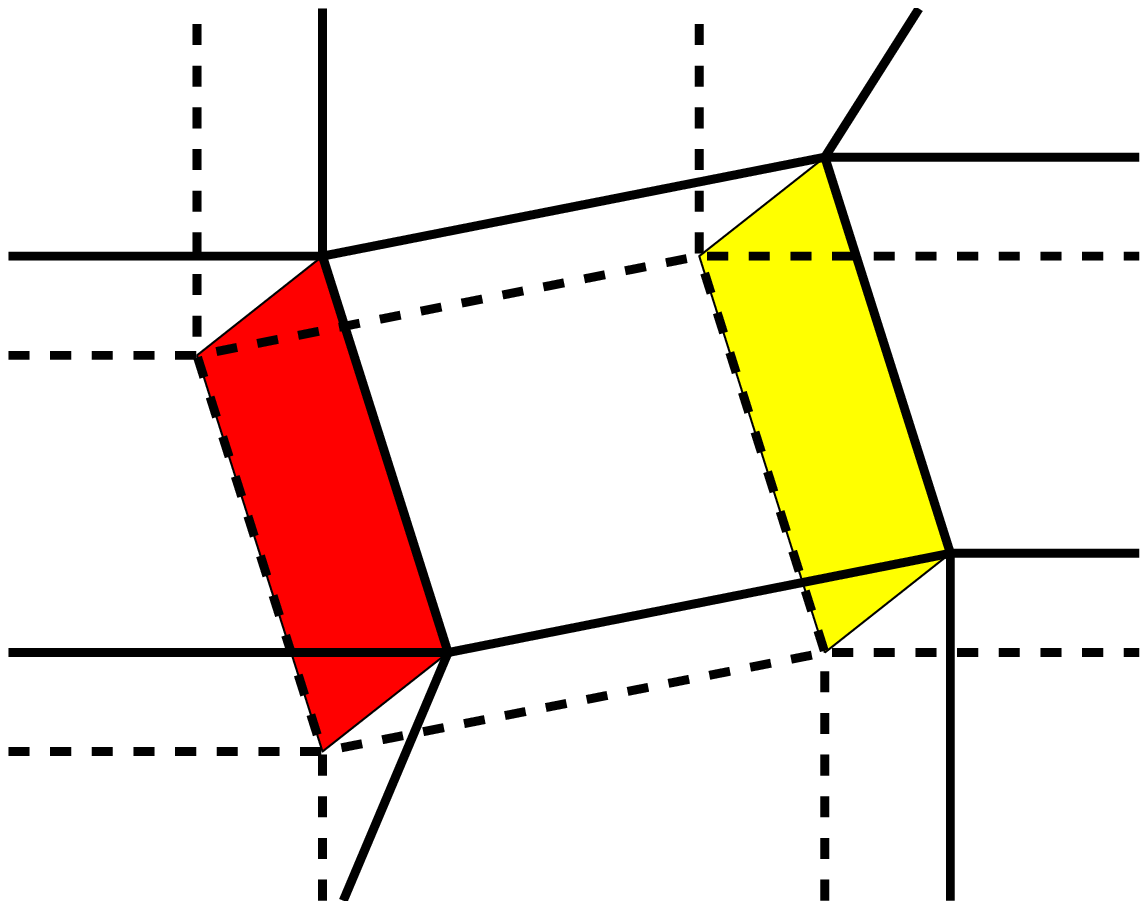}}
\subfloat[\label{fig:swap}]{\includegraphics[width=0.22\textwidth]{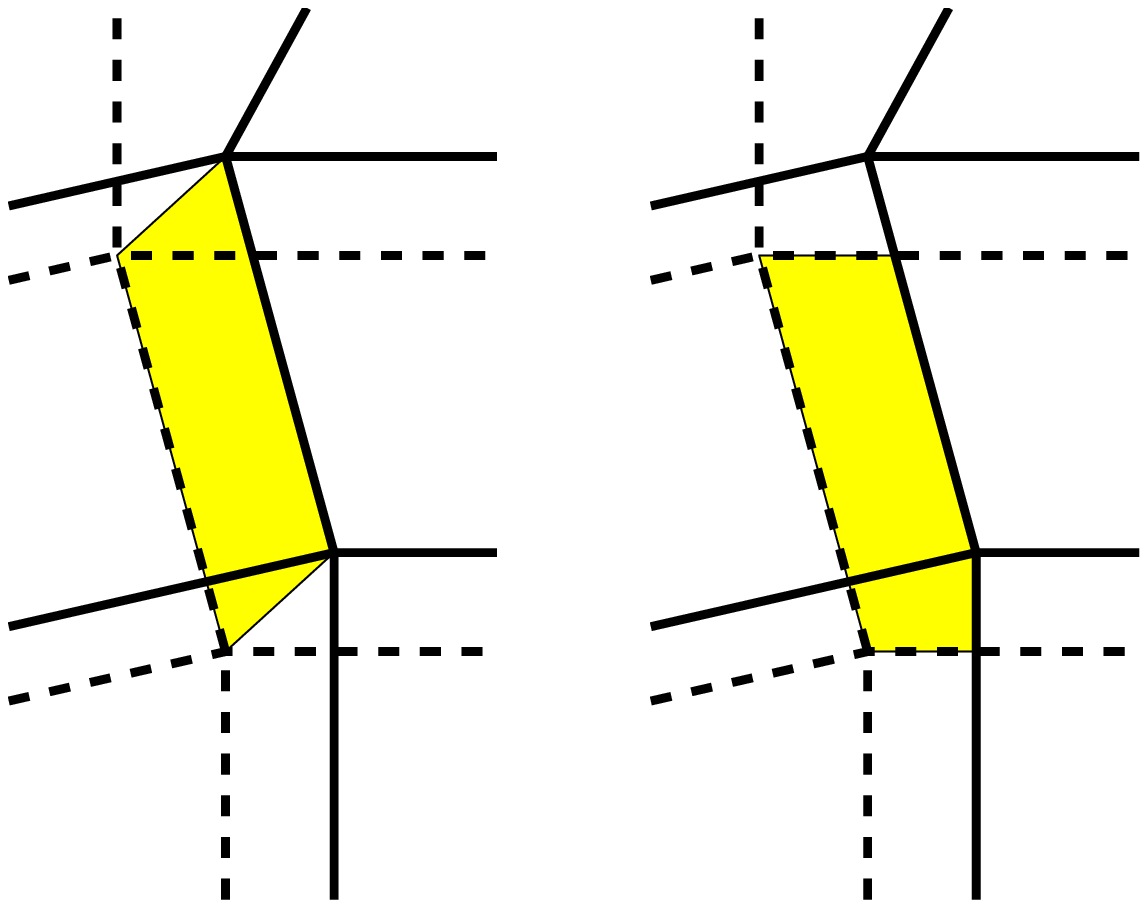}}
\caption{(a) the invading set $\I^b_{i,j}$ (red) and occupied set $\Oc^a_{i,j}$ (b) the swept/fluxing area $\partial F_{i\pm \frac{1}{2},j}$ (c) The swept/fluxing area $\partial F_{i,j\pm \frac{1}{2}}$ (d) the swept area (left) v.s. the local swap set(right) in a local frame.}
\label{fig:IOswept}
\end{figure}

\begin{definition}
A \textit{local frame} consist an edge and its neighbouring edges. Taking the edge $F_{i,j+\frac{1}{2}}$ as an example,
\begin{linenomath}
 \begin{equation}
 \LF_{i, j+\frac{1}{2}}=\{ F_{i,j\pm \frac{1}{2}}, F_{i,j+\frac{3}{2}}, F_{i\pm\frac{1}{2},j}, F_{i\pm \frac{1}{2},j+1} \}.
\end{equation}
\end{linenomath}
\label{def:lf}
\end{definition}
 Figure \ref{fig:lf} illustrates the local frame $\LF_{i,j+\frac{1}{2}}$.
\begin{figure}
\centering
\subfloat[\label{fig:swapping} swap regions]{\includegraphics[width=0.3\textwidth]{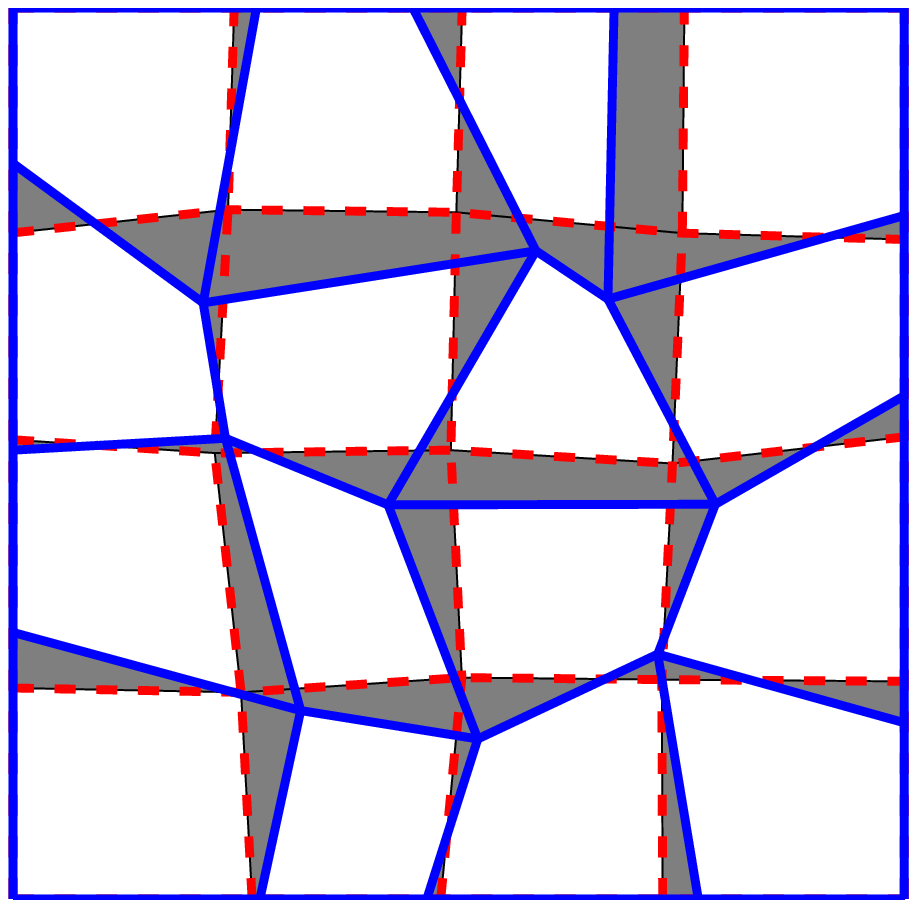}} \quad
\subfloat[$\LF_{i,j+\frac{1}{2}}$\label{fig:lf}]{\includegraphics[width=0.3\textwidth]{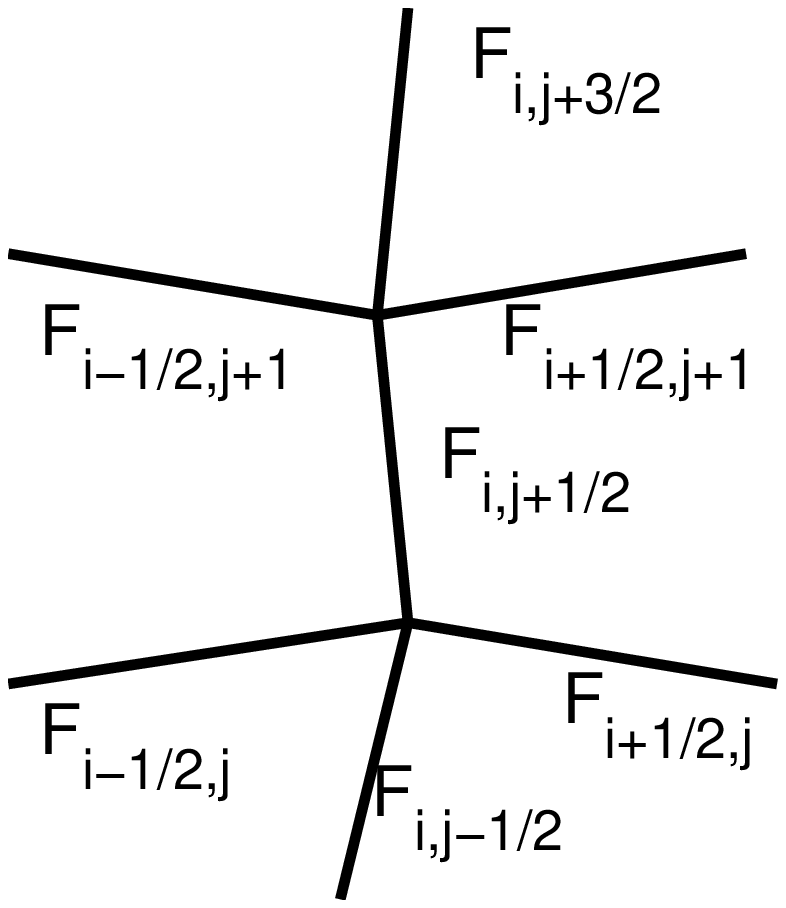}}
\subfloat[\label{fig:abcd}]{\includegraphics[width=0.3\textwidth]{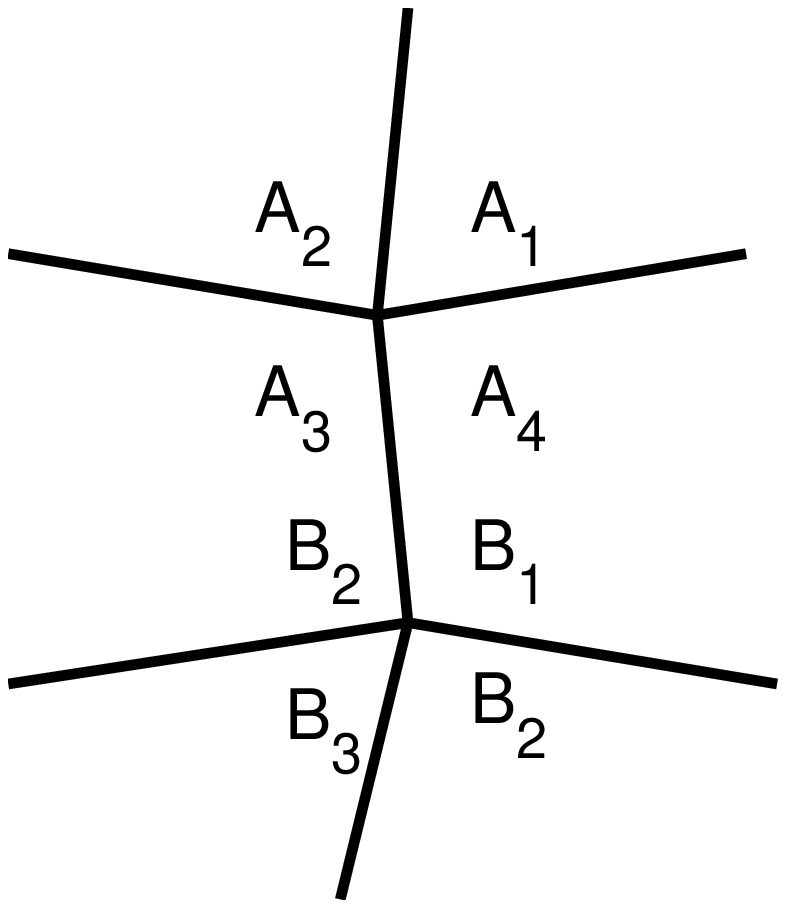}}
\caption{Illustration of the swept/swap region and the local frame}
\end{figure}

 \begin{definition}
 The region between the two curve $x^a_i(t)$ and $x^b_{i}(t)$ in $\Omega $ is referred to a vertical \textit{swap region}. The region between $y_j^a(t)$ and $y_j^b(t)$ in the domain $\Omega$ is referred to as a horizonal swap region. The region enclosed by $x^a_{i}(t), x_i^b(t) , y^b_j(t)$ and $y_{j+1}^b(t) $ is referred to as a \textit{local swap region} in the local frame $\LF_{i,j+\frac{1}{2}}$.
 \end{definition}
We introduce this two definition to tell the difference between the CIB/DC methods and the FB/DC method. In a CIB/DC method, a vertical swap region is divided into local swap region, while in the FB/DC method, a vertical swap region is divided by local swept regions. Such a difference results in different programming difficult.
\begin{definition}
The intersection points of $x^a_i(t)$ and $x^b_i(t)$ or $y^a_j(t)$ and $y^b_j(t)$ will be referred to as \textit{singular intersection points}. The total number of singular intersection points between $x^a_i(t)$ and $x^a_i(t)$ is denoted as $ns_{xx}$ for $ 1\leq i \leq M$, and the total singular intersection points between $y^a_j(t)$ and $y_j^b(t)$ will be denoted as $ns_{yy}$.
\end{definition}

\section{Facts and results}
 Let $\T^a$ and $\T^b$ be two admissible quadrilateral mesh with the Assumption A1, then the following facts hold  \begin{fact}
 The element $T^b_{i,j}$ of $\T^b$ locates in the interior of a local patch $\LP^a_{i,j}$ of $\T^a$. \end{fact}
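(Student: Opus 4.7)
The plan is to establish the containment by combining the two constraints provided by the assumptions: A1 locates the four vertices of $T^b_{i,j}$ in the local patch, and A2 then confines the four connecting edges.

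First I would apply Assumption A1 to each of the vertices $Q_{i,j}$, $Q_{i+1,j}$, $Q_{i+1,j+1}$, $Q_{i,j+1}$ of $T^b_{i,j}$. For each such vertex $Q_{k,s}$, A1 places it in the $2\times 2$ block of $\T^a$-cells $\bigcup_{\epsilon_1,\epsilon_2 \in \{\pm 1/2\}} C^a_{k+\epsilon_1,\,s+\epsilon_2}$ sharing the node $P^a_{k,s}$. A direct index check shows that for every $(k,s) \in \{(i,j),(i+1,j),(i+1,j+1),(i,j+1)\}$, this $2\times 2$ block is a subset of the $3\times 3$ local patch $\LP^a_{i,j}$. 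Consequently all four vertices of $T^b_{i,j}$ lie in $\LP^a_{i,j}$.

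Next I would argue that each edge of $T^b_{i,j}$, being a straight segment between two adjacent vertices, also stays inside $\LP^a_{i,j}$. Consider for example the edge $Q_{i,j}Q_{i+1,j}$, a sub-segment of the curve $y^b_j(t)$. Its endpoints lie in columns of $\T^a$ indexed by $\{i-1,i\}$ and $\{i,i+1\}$ respectively. If the segment crossed the column boundary $x^a_{i-1}(t)$ or $x^a_{i+1}(t)$, it would do so at least twice (once outbound, once returning), contradicting A2 which allows at most one such crossing. The remaining three edges are handled analogously using the complementary pair of transverse curves. Since $T^b_{i,j}$ is the planar region enclosed by its four edges, collecting these edge containments yields $T^b_{i,j} \subset \LP^a_{i,j}$.

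The main obstacle is excluding that a straight edge of $T^b_{i,j}$ can bulge into a row (for horizontal-type edges) or a column (for vertical-type edges) not adjacent to either of its endpoints. Handling this rigorously requires pairing the endpoint location given by A1 with the transversality bound from A2 applied to the correctly chosen pair of curves, rather than either assumption in isolation. Once this is settled, the inclusion of the entire cell in the local patch is an immediate consequence of the edge confinement.
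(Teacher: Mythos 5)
The paper itself states this Fact as a bare consequence of Assumption A1 and offers no proof, so your attempt is necessarily more detailed than anything in the source; your first step is sound, but the second contains a genuine gap. The vertex containment is fine: for each vertex $Q_{k,s}$ of $T^b_{i,j}$ with $(k,s)\in\{(i,j),(i+1,j),(i+1,j+1),(i,j+1)\}$, the $2\times2$ block of old cells around $P_{k,s}$ supplied by A1 is indeed a sub-block of the $3\times3$ patch $\LP^a_{i,j}$, so all four vertices of $T^b_{i,j}$ lie in the patch.

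The gap is in the edge confinement. Take the edge $Q_{i,j}Q_{i+1,j}\subset y^b_j(t)$. Assumption A2 bounds the intersections of $y^b_j(t)$ only with the \emph{transverse} old curves $x^a_k(t)$; this does rule out a horizontal escape (both endpoints lie between $x^a_{i-1}(t)$ and $x^a_{i+2}(t)$, and crossing either of those curves would require two intersections where A2 allows at most one). But a vertical escape --- the edge dipping below $y^a_{j-1}(t)$ into row $j-2$, or rising above $y^a_{j+2}(t)$ --- is a crossing of $y^b_j(t)$ with an old curve of the \emph{same} orientation, and A2 says nothing about those. On the contrary, such crossings are exactly the paper's ``singular intersection points'' ($ns_{yy}$), which Theorem 1 explicitly permits in arbitrary number. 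A straight segment whose endpoints both lie above a piecewise linear curve can cross it twice if that curve has an upward kink at $P_{i,j-1}$ or $P_{i+1,j-1}$ between the endpoints, and neither A1 nor A2 forbids such a configuration. So the ``complementary pair of transverse curves'' you invoke for the remaining direction does not exist in the hypotheses; closing the argument needs an extra ingredient (e.g., convexity of the old cells, or a restriction on same-family singular intersections near the patch). You correctly identified this obstacle in your final paragraph, but the proposed resolution does not actually resolve it.
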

\begin{fact}
 The face $F^b_{i,j+\frac{1}{2}}$ locates in the local frame $\LF^a_{i,j+\frac{1}{2}}$.
\end{fact}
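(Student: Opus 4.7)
The plan is to combine Assumption A1, which localises the two endpoints of $F^b_{i,j+\frac{1}{2}}$, with Assumption A2, which limits how this edge can wander across the old mesh. Let $R$ denote the closed region formed by the union of the six old cells $C^a_{i\pm\frac{1}{2},\,j-\frac{1}{2}}$, $C^a_{i\pm\frac{1}{2},\,j+\frac{1}{2}}$, $C^a_{i\pm\frac{1}{2},\,j+\frac{3}{2}}$; the seven interior edges of this $2\times 3$ block are exactly the members of $\LF^a_{i,j+\frac{1}{2}}$, so it suffices to show $F^b_{i,j+\frac{1}{2}} \subset R$, because then every intersection of $F^b_{i,j+\frac{1}{2}}$ with an old edge must be with an interior edge of $R$, i.e., with an element of $\LF^a_{i,j+\frac{1}{2}}$.

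First, I would apply A1 to each endpoint separately: $Q_{i,j}$ lies in $\bigcup_{\pm,\pm} C^a_{i\pm\frac{1}{2},\,j\pm\frac{1}{2}}$, and $Q_{i,j+1}$ lies in $\bigcup_{\pm,\pm} C^a_{i\pm\frac{1}{2},\,j+\frac{1}{2}\pm\frac{1}{2}}$; each of these four-cell neighbourhoods is contained in $R$, so both endpoints already sit in the interior of $R$. Next, arguing by contradiction, suppose that $F^b_{i,j+\frac{1}{2}}$ exits $R$: as a straight segment with both endpoints in $R$, it must then cross $\partial R$ an even number of times and at least twice, at two distinct boundary edges. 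The boundary $\partial R$ decomposes into portions of $y^a_{j-1}$, $y^a_{j+2}$, $x^a_{i-1}$, and $x^a_{i+1}$. Two crossings with the same horizontal curve $y^a_{j-1}$ or $y^a_{j+2}$ violate A2 applied to $x^b_i$, which contains $F^b_{i,j+\frac{1}{2}}$. A pair of crossings involving $x^a_{i-1}$ or $x^a_{i+1}$ would force the corresponding endpoint to lie in a cell of $\T^a$ outside the four cells permitted by A1, contradicting A1. The remaining mixed case — one crossing of $y^a_{j-1}$ (or $y^a_{j+2}$) paired with one crossing of $x^a_{i\pm 1}$ — is ruled out by the same A1 argument applied to whichever endpoint the line segment has to reach after re-entering $R$, since after one such detour the straight segment cannot return to the correct A1-admissible neighbourhood of $P_{i,j+1}$.

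The main obstacle is the mixed case of the boundary crossings, because A2 does not directly constrain intersections between $F^b_{i,j+\frac{1}{2}} \subset x^b_i$ and the neighbouring columns $x^a_{i\pm 1}$. Making this piece rigorous requires a careful book-keeping of the $4\times 4 = 16$ sub-cases corresponding to which of the four A1-admissible cells actually contains each endpoint, and a small Jordan-arc argument exploiting the straightness of $F^b_{i,j+\frac{1}{2}}$ together with the admissibility of $\T^a$. Once these sub-cases are checked, Fact 2 follows immediately from the identification between the interior edges of $R$ and the members of $\LF^a_{i,j+\frac{1}{2}}$.
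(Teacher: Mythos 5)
You have read the statement correctly, and it is worth noting at the outset that the paper itself offers no proof of this Fact: it is listed as an immediate consequence of Assumption A1, which is exactly the content of your first two steps (A1 places $Q_{i,j}$ and $Q_{i,j+1}$ in their four-cell neighbourhoods, and the seven edges of $\LF^a_{i,j+\frac{1}{2}}$ are precisely the interior edges of the $2\times 3$ block $R$ formed by the six cells $C^a_{i\pm\frac{1}{2},j-\frac{1}{2}}$, $C^a_{i\pm\frac{1}{2},j+\frac{1}{2}}$, $C^a_{i\pm\frac{1}{2},j+\frac{3}{2}}$). The gap is in the crossing-parity argument you add on top of this. A1 constrains only the two endpoints of the segment, not its interior, and $R$ is in general not convex: if the portion of the curve $x^a_{i+1}(t)$ between $y^a_{j-1}(t)$ and $y^a_{j+2}(t)$ bends towards column $i$ (which admissibility of $\T^a$ does not forbid --- even for convex cells, the angle of $\partial R$ at $P_{i+1,j}$ is the sum of two cell angles and may exceed $\pi$), the straight segment $F^b_{i,j+\frac{1}{2}}$ can leave $R$ through $x^a_{i+1}$ and re-enter through $x^a_{i+1}$ while both of its endpoints remain in their A1-admissible cells. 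Hence your assertion that a pair of crossings involving $x^a_{i\pm 1}$ ``would force the corresponding endpoint to lie in a cell outside the four cells permitted by A1'' is false, and A2 cannot be substituted for it, because A2 restricts intersections of $x^b_i$ only with the horizontal curves $y^a_j$ (and of $y^b_j$ with $x^a_i$), never with $x^a_{i\pm 1}$. The mixed case, which you yourself flag as the main obstacle and leave as ``careful book-keeping,'' breaks down for the same reason.

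The honest conclusion is that under A1 and A2 alone the Fact is not a theorem for arbitrary admissible quadrilateral meshes of the same connectivity: your decomposition of $\partial R$ and the A2 argument correctly control the horizontal pieces $y^a_{j-1}$ and $y^a_{j+2}$, but controlling the vertical pieces $x^a_{i\pm 1}$ requires an extra hypothesis --- for instance a quantitative closeness of $\T^b$ to $\T^a$, or a local convexity condition on the boundary polylines $x^a_{i\pm1}$ restricted to rows $j-1$ through $j+2$ --- or else the statement must simply be adopted as a standing assumption in the same spirit as A1--A3, which is in effect how the paper treats it. If you add such a hypothesis explicitly, the remainder of your argument closes.
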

\begin{fact}
An inner element of $\T^b$ has at least $4^4$ possible ways to intersect with a local patch in $\T^a$.
\end{fact}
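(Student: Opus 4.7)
The statement is a counting claim: I will exhibit a lower bound on the number of combinatorially distinct ways $T^b_{i,j}$ can sit inside $\LP^a_{i,j}$. My plan is to classify each configuration by the tuple of host cells containing the four vertices of $T^b_{i,j}$, use Assumption~\ref{ass:1} to count the options per vertex, and argue that the four choices are independent.

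\textbf{Steps.} First, write $T^b_{i,j}$ explicitly as the quadrilateral $Q_{i,j}Q_{i+1,j}Q_{i+1,j+1}Q_{i,j+1}$. Second, apply Assumption~\ref{ass:1} to each vertex in turn: $Q_{i,j}$ lies in one of the four $\T^a$-cells meeting at $P_{i,j}$, namely $C^a_{i\pm\frac{1}{2},j\pm\frac{1}{2}}$; analogously, $Q_{i+1,j}$, $Q_{i+1,j+1}$, $Q_{i,j+1}$ each lie in one of four $\T^a$-cells meeting at the corresponding $\T^a$-vertex, and a quick check shows all of these cells are elements of $\LP^a_{i,j}$ (the $3\times 3$ block $\{T^a_{k,s}:|k-i|,|s-j|\le 1\}$). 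Third, observe that the four vertex locations can in principle be chosen independently, so the number of possible configurations (indexed by which quadruple of cells hosts the vertices) is at least $4 \times 4 \times 4 \times 4 = 4^4$. Each such configuration yields a topologically different intersection pattern of $T^b_{i,j}$ with $\LP^a_{i,j}$, which is what we needed to bound from below.

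\textbf{Main obstacle.} The only subtle point is the independence of the four vertex placements, because the combinatorial admissibility of $\T^b$ (a non-self-intersecting quadrilateral whose boundary enters and leaves each host cell consistently) can in principle forbid certain quadruples. My argument for independence is constructive: for any target quadruple of host cells, one can realize it by a small enough quadrilateral $T^b_{i,j}$ whose four vertices are placed inside the interiors of the prescribed cells (and whose edges, by Assumption~\ref{ass:2}, intersect each $\T^a$-face at most once), so no admissibility constraint is violated. Because the statement says ``at least,'' I do not need to prove that every one of the $4^4$ labelings is attainable for every geometric configuration; producing one $\T^b$ realizing each distinct intersection pattern suffices. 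The remaining verification — that distinct host-cell quadruples yield distinct intersection patterns — follows because the host of a vertex is recoverable from the intersection pattern itself.
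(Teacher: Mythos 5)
Your proposal is correct and follows exactly the reasoning the paper intends (the paper states this fact without an explicit proof, but the abstract and discussion make clear the count comes from each of the four vertices of $T^b_{i,j}$ lying, by Assumption~A1, in one of the four $\T^a$-cells surrounding its corresponding $\T^a$-vertex, giving $4\times4\times4\times4$ independent choices). Your added care about realizability of each quadruple and about all host cells lying in the $3\times3$ local patch is sound and only strengthens the argument.
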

\begin{fact}
If there is no singular point in the local swap region between $x_i^a(t)$ and $x_i^b(t)$ in $\Omega$ for some $i \in \{2, 3, \ldots,M-1 \}$, the local swap region consists of $2(N-1)-1$ polygons. Each singular intersection point in the swap region will  bring one more polygon.
\end{fact}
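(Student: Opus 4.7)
The plan is to apply Euler's formula $V - E + F = 2$ to the planar graph $G$ formed by the restriction of the curves $x^a_i$, $x^b_i$, and the relevant pieces of $y^a_j$, $y^b_j$ for $j=2,\ldots,N-1$ to the closed swap region $\Sigma_i$ bounded by $x^a_i$ and $x^b_i$. First I would treat the base case with no singular points. Under Assumption A1, the corresponding vertices $P_{i,j}$ and $Q_{i,j}$ lie in the same local patch and share endpoints on $\partial\Omega$, and the absence of an $x^a_i \cap x^b_i$ singular point makes $\Sigma_i$ a topological disk bounded by the two piecewise-linear curves.

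Next I would count the chords that subdivide $\Sigma_i$. Each $y^a_j$ with $2 \le j \le N-1$ passes through $P_{i,j} \in x^a_i$ and, by Assumption A2, meets $x^b_i$ at exactly one point, contributing a single chord of $\Sigma_i$. Symmetrically each $y^b_j$ contributes one chord, for $2(N-2)$ chords total. In the no-singular-point hypothesis, $y^a_j$ and $y^b_j$ are by definition disjoint inside $\Sigma_i$, so the chords are pairwise non-crossing. The standard inductive lemma (each non-crossing chord raises the face count by one) then gives $1 + 2(N-2) = 2(N-1)-1$ interior polygons, proving the first assertion.

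For the second assertion, I would introduce each singular point one at a time and track the change in $F$. If the point is a crossing of $y^a_j$ and $y^b_j$, perturbing from the non-crossing configuration to the crossing one adds a single interior vertex and subdivides two chord-edges, so $\Delta V = 1$ and $\Delta E = 2$; Euler's formula then yields $\Delta F = 1$, producing exactly one new polygon. If instead the point is a crossing of $x^a_i$ and $x^b_i$, then $\Sigma_i$ pinches into two topological disks meeting at a single vertex (a wedge); this wedge is still simply connected, so $V-E+F=2$ applies, and a direct computation of the added vertex and its four incident edge-halves on the boundary shows the total interior face count again increases by one.

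The main obstacle will be the second type of singular point, where $\Sigma_i$ ceases to be a disk and one must argue within a wedge of disks rather than a single disk. I would handle this either by the Euler bookkeeping above or, equivalently, by a continuous deformation argument: slide the mesh so that an $x^a_i$, $x^b_i$ crossing appears transversally at a single moment, and verify that the chords of each lobe inherit the count from before the pinch, plus one new region created at the pinch itself. Summing the contributions yields $2(N-1)-1 + ns$ polygons where $ns$ is the number of singular intersection points in $\Sigma_i$, which is the claim.
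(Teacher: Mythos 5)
Your argument is correct and, once the Euler-formula dressing is stripped away, it is the same enumeration the paper uses inside the proof of Theorem~\ref{thm:1}: the $2(N-1)$ horizontal curves (equivalently, your $2(N-2)$ non-crossing interior chords plus the disk itself) cut the vertical strip into $2(N-1)-1$ pieces, and each singular point splits one region into two, which is exactly your $\Delta F=\Delta E-\Delta V=1$ bookkeeping. The paper states this Fact without a separate proof, so your explicit treatment of the $x_i^a\cap x_i^b$ pinch (wedge of disks) is if anything more careful than the paper's one-line remark that a singular point ``divides one not simple polygon into two simple polygons,'' and like the paper you implicitly assume the curves $y_j^a$ and $y_k^b$ with $j\neq k$ do not meet inside the strip.
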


\subsection{Basic lemma}

The following results serve as the basis of the CIB/DC and FB/DC methods.
\begin{lemma}
Let $\T^a$ and $\T^b$ be two admissible mesh of the same structure. Under the Assumption A1, we have
\begin{itemize}
\item [(a)]
\begin{linenomath}
\begin{equation}
\mu(T^b_{i,j}) = \mu(T^a_{ij}) -\mu(\mathcal{O}^a_{i,j}) + \mu (\mathcal{I}^b_{i,j}), \label{eq:c}
\end{equation}
\end{linenomath}
where $\mu( \cdot )$ is the area of the underlying set, $\mathcal{I}^b_{i,j}$ is the invading set of $T^b_{i,j}$ and $\Oc^a_{i,j}$ is the occupied set of $T^a_{i,j}$ defined in \eqref{eq:I} and \eqref{eq:O} respectively.
\item [(b)]
\begin{linenomath}
\begin{equation}
\mu(T^b_{i,j}) =\mu(T^a_{i,j}) + \overrightarrow{\mu} (\partial F^{b+}_{i +\frac{1}{2},j}) + \overrightarrow{\mu} (\partial^{b+} F_{i+1,j+\frac{1}{2}})
+ \overrightarrow{\mu} (\partial F^{b+}_{i +\frac{1}{2},j+1}) + \overrightarrow{\mu} (\partial^{b+} F_{i,j+\frac{1}{2}})
,
\end{equation}
\end{linenomath}
where  $\overrightarrow{\mu}$ stands for the signed area calculated by directional line integrals.
\end{itemize}

\label{lem:b}
\end{lemma}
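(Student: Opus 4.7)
The plan is to prove the two identities by separate and quite elementary arguments: (a) is a direct consequence of the set-theoretic definitions of $\mathcal{I}^b_{i,j}$ and $\mathcal{O}^a_{i,j}$ combined with Assumption A1, while (b) follows from Green's theorem in its signed-area form together with a cancellation argument on the bridging segments between the $P$-vertices and the $Q$-vertices.

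For part (a), I would first argue that under Assumption A1 each vertex of $T^b_{i,j}$ lies in the interior of $\LP^a_{i,j}$, and since $\LP^a_{i,j}$ is simply connected and contains all four vertices of the quadrilateral $T^b_{i,j}$, it contains $T^b_{i,j}$ itself; by symmetry $T^a_{i,j}\subset\LP^b_{i,j}$. Consequently the definitions \eqref{eq:I} and \eqref{eq:O} simplify to
\begin{linenomath}
\begin{equation*}
\mathcal{I}^b_{i,j}=T^b_{i,j}\setminus(T^b_{i,j}\cap T^a_{i,j}),\qquad \mathcal{O}^a_{i,j}=T^a_{i,j}\setminus(T^a_{i,j}\cap T^b_{i,j}).
\end{equation*}
\end{linenomath}
Additivity of the planar measure on the disjoint decompositions $T^b_{i,j}=(T^b_{i,j}\cap T^a_{i,j})\cup\mathcal{I}^b_{i,j}$ and $T^a_{i,j}=(T^a_{i,j}\cap T^b_{i,j})\cup\mathcal{O}^a_{i,j}$ yields two expressions for $\mu(T^a_{i,j}\cap T^b_{i,j})$. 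Eliminating that common term gives \eqref{eq:c}.

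For part (b) I would express every signed area as a line integral via $\overrightarrow{\mu}(R)=\tfrac12\oint_{\partial R}(x\,dy-y\,dx)$ taken in the counterclockwise sense, and introduce the shorthand $L[A,B]:=\tfrac12\int_A^B(x\,dy-y\,dx)$ for the contribution of one directed segment. Then $\mu(T^b_{i,j})$ is the sum of four $L$-terms along $Q_{i,j}\to Q_{i+1,j}\to Q_{i+1,j+1}\to Q_{i,j+1}\to Q_{i,j}$ and analogously for $\mu(T^a_{i,j})$ with the $P$-vertices. Each swept quadrilateral $\partial^{b+}F_{\cdot}$ listed in the definition also splits into four $L$-terms, with its orientation dictated by the superscript $b+$, i.e.\ the $Q$-edge is traversed in the same direction in which it appears along $\partial T^b_{i,j}$. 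I would then add the four signed swept areas and sort the resulting sixteen directed segments into three groups. The $Q$-to-$Q$ segments reassemble into the counterclockwise boundary of $T^b_{i,j}$, contributing $\mu(T^b_{i,j})$. The $P$-to-$P$ segments reassemble into the \emph{clockwise} boundary of $T^a_{i,j}$, because each swept quadrilateral walks its $P$-edge opposite to the counterclockwise direction around $\partial T^a_{i,j}$; their contribution is therefore $-\mu(T^a_{i,j})$. The remaining eight $P$-to-$Q$ bridging segments appear in adjacent pairs: the bridge at each shared vertex $P_{\cdot}Q_{\cdot}$ is traversed once in each direction by the two swept quadrilaterals that meet there, so the $L$-contributions cancel exactly. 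Rearranging the resulting identity gives (b).

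The only delicate point is the orientation bookkeeping in part (b): verifying that the $P$-edges in the four swept quadrilaterals collectively traverse $\partial T^a_{i,j}$ clockwise, and that each bridging segment appears with opposite orientations in the two swept quadrilaterals sharing it. Both facts are forced by the explicit vertex ordering given in the definition of $\partial^{b+}F_{\cdot}$, so once these sign conventions are fixed the proof reduces to a mechanical rearrangement of $L$-terms. I expect no further obstacle; the result is essentially a discrete version of the identity that the difference of the signed areas of two closed curves equals the sum of the signed areas swept out between their corresponding arcs.
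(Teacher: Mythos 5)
Your proposal is correct and follows essentially the same route as the paper: part (a) via the two disjoint decompositions $T^b_{i,j}=\mathcal{I}^b_{i,j}\cup(T^a_{i,j}\cap T^b_{i,j})$ and $T^a_{i,j}=\mathcal{O}^a_{i,j}\cup(T^a_{i,j}\cap T^b_{i,j})$ with elimination of the common term, and part (b) via Green's formula with cancellation of the line integrals along the segments not lying on $\partial T^b_{i,j}$, which the paper merely asserts ``with the help of Fig.~1'' while you spell out the orientation bookkeeping explicitly. The only soft spot is your justification that $T^b_{i,j}\subset\LP^a_{i,j}$ (simple connectivity plus containing the four vertices does not by itself force containment of the quadrilateral), but the paper likewise takes this as an unproved Fact, so you are at the same level of rigor.
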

\begin{proof}
(a)
According to the identities
\begin{linenomath}
\begin{align*}
T^b_{i,j} &=\left (T^b_{i,j} \cap \LP^a_{i,j}) \backslash (T^b_{i,j} \cap T^a_{i,j}) \right) \bigcup (T^b_{i,j} \cap T^a_{i,j} )=\I^b_{i,j} \bigcup (T^a_{i,j} \cap T^b_{i,j}), \\
T^b_{i,j} &=\left (T^a_{i,j} \cap \LP^b_{i,j}) \backslash (T^a_{i,j} \cap T^b_{i,j}) \right) \bigcup (T^a_{i,j} \cap T^b_{i,j} )=\Oc^b_{i,j} \bigcup (T^a_{i,j} \cap T^b_{i,j}),
\end{align*}
\end{linenomath}
we have
\begin{linenomath}
\begin{align}
\mu(T^b_{i,j})= \mu (\I^b_{i,j}) +\mu(T^a_{i,j} \cap T^b_{i,j}),   \label{eq:c1}\\
\mu(T^a_{i,j})=\mu(\Oc^a_{i,j}) +\mu (T^a_{i,j} \cap T^b_{i,j}).  \label{eq:c2}
\end{align}
\end{linenomath}
Employ \eqref{eq:c1} and \eqref{eq:c2} and cancel $\mu (T^a_{i,j} \cap T^b_{i,j}) $, we obtain the result.

(b)The area of a quadrilateral polygon with vertices $v_1v_2v_3v_4$ counterclockwise arranged can be calculated  according to the Green formula by line integrals.
\begin{linenomath}
\begin{equation}
\iint_{\{v_1v_2v_3v_4 \}} dxdy = {\ointctrclockwise}_{\overrightarrow {v_1v_2}+\overrightarrow {v_2v_3}+ \overrightarrow {v_3v_4} +\overrightarrow {v_4v_1}} xdy.
\end{equation}
\end{linenomath}
Then one can verify that
\begin{linenomath}
\begin{align}
{\ointctrclockwise}_{\partial T^b_{i,j}} xdy &= {\ointctrclockwise}_{\partial T^a_{i,j}} xdy + \sum_{k,s}\oint_{\partial^{b+} F_{k,s}} xdy, \label{eq:cf}
\end{align}
\end{linenomath}
where $(k,s) \in \{ (i+\frac{1}{2},j), (i+1, j+\frac{1}{2}), (i+\frac{1}{2},j+1) , (i,j+\frac{1}{2}) \}$.
With the help of Fig.~\ref{fig:IOswept}, one can check that those line integrals which does not go along an edge of $T^b_{i,j}$ on the right hand side of \eqref{eq:cf} are canceled.
\qed
\end{proof}
\begin{remark}
If we write
\begin{linenomath}
$$
\I_{i,j}^b =\bigcup_{(k,s) \in \mathcal{J}_{i,j}^a} ( \I_{i,j}^b\cap T_{k,js}^a), \quad \Oc_{i,j}^a =\bigcup_{(k,s)\in \mathcal{J}_{i,j}^a} (\Oc_{i,j}^a \cap T_{k,s}^a).
$$
\end{linenomath}
and define
\begin{linenomath}
\begin{equation}
FA_{i,j\rightarrow k,s}= ( \I_{i,j}^b\cap T_{k,s}^a) - (\Oc_{i,j}^a \cap T_{k,s}^a) \textit{ for } (k,s) \in \mathcal{J}_{i,j}^a.
\end{equation}
\end{linenomath}
then
\begin{linenomath}
\begin{equation}
\mu(T_{ij}^b) =\mu(T_{i,j}^a)+\sum_{(k,s) \in \mathcal{J}_{i,j}^a} \mu(FA_{i,j\rightarrow k,s}). \label{eq:gfl}
\end{equation}
\end{linenomath}
$FA_{i,j\rightarrow k,s}$ corresponds to the so called  \textit{generalised (mass) flux } in \cite[eq. 19]{B11} and \cite[eq. 3.12]{margolin03}. The \textit{generalised flux} terms can have up to 9 terms in a local patch while the (physical) flux area can only have 4 terms.
\end{remark}

 Suppose a density function is a piecewise function on the tessellation $\T^a$ of $\Omega$. To avoid the interior singularity, we have to calculate the mass on $T^b_{i,j}$ piecewisely to avoid interior singularity as
 \begin{linenomath}
\begin{equation}
\int_{T_{i,j}^b} \rho d \Omega = \sum_{(k,s)\in \mathcal{J}^a_{i,j}} \int_{T^b_{i,j} \cap T^a_{k,s}} \rho d\Omega =  \int_{T^a_{i,j}} \rho d\Omega -  \int_{\I^b_{i,j}} \rho d\Omega -\int_{\Oc^a_{i,j}} \rho d\Omega.
\end{equation}
\end{linenomath}

The following result is a directly consequence  of Lemma \ref{lem:b}.
\begin{corollary}
Let $\T^a$ and $\T^b$ be two admissible quadrilateral mesh of $\Omega$, and $\rho$ is a piecewise function on $\T^a$, then the mass on each element of $\T^b$ satisfies
\begin{linenomath}
\begin{equation}
m(T_{i,j}^b) =m(T^a_{i,j}) -m(\Oc^a_{i,j})+m(\I^b_{i,j}). \label{eq:CIB}
\end{equation}
\end{linenomath}
and
\begin{linenomath}
\begin{equation}
m(T_{i,j}^b)=m(T^a_{i,j}) + \sum_{k,s} m(\partial^{b+} F_{k,s}). \label{eq:FB}
\end{equation}
\end{linenomath}
where $(k,s) \in \{ (i+\frac{1}{2},j), (i+1, j+\frac{1}{2}), (i+\frac{1}{2},j+1) , (i,j+\frac{1}{2}) \}$,  $ m(\partial^{b+} F_{k,s})$ is directional mass, the sign is  consistent with the directional area of $ \partial^{b+} F_{k,s}$.
\end{corollary}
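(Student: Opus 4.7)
The plan is to mirror each step of Lemma~\ref{lem:b} by replacing the planar Lebesgue area measure $\mu$ with the weighted measure $\rho\,d\Omega$, and to handle the piecewise nature of $\rho$ by keeping every set decomposition subordinate to the old mesh $\T^a$, on each cell of which $\rho$ is smooth enough.

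First I would dispatch (\ref{eq:CIB}). The set-theoretic identities already established in the proof of Lemma~\ref{lem:b}(a) express $T^b_{i,j}$ and $T^a_{i,j}$ as disjoint unions (modulo boundaries), and $\rho\in L_1(\Omega)$ defines an absolutely continuous measure, so the same countable additivity that produced (\ref{eq:c1})--(\ref{eq:c2}) gives the mass analogues $m(T^b_{i,j})=m(\I^b_{i,j})+m(T^a_{i,j}\cap T^b_{i,j})$ and $m(T^a_{i,j})=m(\Oc^a_{i,j})+m(T^a_{i,j}\cap T^b_{i,j})$. Subtracting them eliminates the common overlap term and yields (\ref{eq:CIB}). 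The piecewise nature of $\rho$ causes no trouble here, because each of $\I^b_{i,j}$, $\Oc^a_{i,j}$, and $T^a_{i,j}\cap T^b_{i,j}$ is automatically split into pieces lying inside a single cell of $\T^a$.

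For (\ref{eq:FB}) I would first fix the meaning of the directional mass $m(\partial^{b+} F_{k,s})$ as the signed mass of the swept quadrilateral, with the sign inherited from the counterclockwise orientation of $\partial^{b+}F_{k,s}$, so that portions swept \emph{into} $T^b_{i,j}$ from a neighbouring old cell contribute positively and portions swept \emph{out of} $T^a_{i,j}$ contribute negatively. With this convention the pointwise picture of Fig.~\ref{fig:IOswept} under the weight $\rho$ gives
\begin{linenomath}
\begin{equation*}
m(\I^b_{i,j})-m(\Oc^a_{i,j})=\sum_{(k,s)} m(\partial^{b+} F_{k,s})
\end{equation*}
\end{linenomath}
by exactly the cancellation that killed the extra line integrals in (\ref{eq:cf}): the north-west and south-east corner portions of adjacent swept regions (Fig.~\ref{fig:swepta}, \ref{fig:swept2}) occur with opposite signs and self-cancel. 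Combining this with (\ref{eq:CIB}) then gives (\ref{eq:FB}).

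The hard part is giving a rigorous meaning to $m(\partial^{b+} F_{k,s})$ when $\rho$ is only piecewise on $\T^a$: the Green-formula derivation $\iint dx\,dy = {\ointctrclockwise} x\,dy$ used in Lemma~\ref{lem:b}(b) needs a globally defined antiderivative of the weight, which does not exist for a generic piecewise $\rho$. I would sidestep this by \emph{defining} $m(\partial^{b+} F_{k,s})$ as the cell-subordinate signed sum $\sum_{(p,q)\in\mathcal{J}^a_{i,j}} \mathrm{sgn}(p,q)\int_{\partial^{b+}F_{k,s}\cap T^a_{p,q}} \rho\,d\Omega$, with the orientation convention of Lemma~\ref{lem:b}(b) applied inside each old cell $T^a_{p,q}$. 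Cell-by-cell, $\rho$ is smooth enough for Green's formula to hold in the classical form, so the boundary-integral cancellation of (\ref{eq:cf}) goes through verbatim on each such cell; aggregating over $(p,q)\in\mathcal{J}^a_{i,j}$ assembles these local identities into (\ref{eq:FB}) without ever integrating across a discontinuity of $\rho$.
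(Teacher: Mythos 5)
Your proof is correct, and for \eqref{eq:CIB} it coincides with the paper's (unwritten) argument: the paper states the corollary only as a ``direct consequence'' of Lemma~\ref{lem:b}, and your replacement of the area measure $\mu$ by the absolutely continuous measure $m(\cdot)=\int_{\cdot}\rho\,d\Omega$ in the two disjoint decompositions behind \eqref{eq:c1}--\eqref{eq:c2} is exactly that consequence. For \eqref{eq:FB} you take a genuinely different route, and a better-founded one: the paper's mechanism in Lemma~\ref{lem:b}(b) is the Green formula $\iint dx\,dy=\oint x\,dy$ plus cancellation of line integrals as in \eqref{eq:cf}, and, as you correctly observe, this does not transfer verbatim to $\int\rho\,d\Omega$ for a merely piecewise $\rho$, since $x\,dy$ would have to be replaced by a global antiderivative of $\rho$ that need not exist. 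Your substitute --- defining the directional mass as a signed, cell-subordinate integral and deriving $m(\I^b_{i,j})-m(\Oc^a_{i,j})=\sum_{k,s}m(\partial^{b+}F_{k,s})$ from the self-cancellation of the doubly covered corner regions of Fig.~\ref{fig:swepta}--\ref{fig:swept2}, then invoking \eqref{eq:CIB} --- is the only rigorous reading of the paper's phrase ``the sign is consistent with the directional area,'' and it buys a proof that never integrates $\rho$ across one of its discontinuities. The one detail to tighten is your sign convention $\mathrm{sgn}(p,q)$ indexed by the old cell: the sign properly belongs to the lobe of the (possibly self-intersecting) swept quadrilateral, i.e.\ to the side of the old edge $F^a_{k,s}$ on which the piece lies. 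A per-old-cell sign happens to be well defined here only because $F^a_{k,s}$ is itself a mesh edge of $\T^a$, so the two oppositely oriented lobes can never meet the interior of the same old cell; stating the convention that way makes the cancellation argument airtight.
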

The formulas \eqref{eq:CIB} and \eqref{eq:FB} are the essential formula for the CIB/DC method and FB/DC method respectively. The so called \textit{flux-intersection-based} approach based on the formula \eqref{eq:gfl} is equivalent to the CIB/DC approach.

It is easy to find that
\begin{linenomath}
$$
\bigcup_{i,j} \Oc^a_{i,j} =\bigcup_{i,j} \I^b_{i,j} = \bigcup_{i,j} (\partial F_{i + \frac{1}{2},j} \cup \partial F_{i,j+\frac{1}{2}})
$$
\end{linenomath}
This is the total swap region and the fluxing area. Since the swap region is nothing but the union of the the intersections between elements in $\T^a$ and $\T^b$, we shall introduce the following definition to characterise the ways of the intersections.
\begin{theorem}
Let $\T^a$ and $\T^b$ be two admissible quadrilateral mesh of a square in $R^2$ with the Assumption A1 and A2. $ns_{xx}$ and $ns_{yy}$ be the singular intersection numbers between the vertical and horizonal edges of the two mesh. If there is no common edge in the interior of $\Omega$ between $\T^a$ and $\T^b$. Then the swapping region of the two mesh consists of
\begin{linenomath}
\begin{equation}
3(N-1)(M-1)-2((M-1)+(N-1))+1 +ns_{xx} +ns_{yy}. \label{eq:comp}
\end{equation}
\end{linenomath}
polygons.

\label{thm:1}
\end{theorem}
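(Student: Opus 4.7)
The plan is to decompose the total swap region as the union of the vertical swap regions (one per interior column $i = 2,\dots,M-1$) and the horizontal swap regions (one per interior row $j = 2,\dots,N-1$), count polygons in each using Fact~4, and correct by inclusion--exclusion for the local overlaps around each interior vertex. Because the two meshes share the square boundary, the extremal chains $x_1,x_M,y_1,y_N$ coincide, so only the interior columns and rows yield nontrivial swap regions.

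First I would handle the non-degenerate case $ns_{xx}=ns_{yy}=0$. By Fact~4 each of the $M-2$ interior vertical swap regions contains $2(N-1)-1$ polygons, and by the symmetric statement each of the $N-2$ interior horizontal swap regions contains $2(M-1)-1$ polygons. Summed separately these account for $(M-2)(2N-3)+(N-2)(2M-3)$ polygons counted with multiplicity. The core step is then a local lemma: for every interior pair $(i,j)$ the vertical swap region of column $i$ and the horizontal swap region of row $j$ intersect in \emph{exactly one} polygon. Under A1 the vertex $Q_{i,j}$ lies in one of the four cells of $\T^a$ surrounding $P_{i,j}$, and A2 together with the no-common-edge hypothesis forces the four chains $x_i^a,x_i^b,y_j^a,y_j^b$ to cross pairwise in a single neighbourhood of $P_{i,j}$; a short case check over the four positions of $Q_{i,j}$ (reducible by symmetry to one case) shows that the four chains bound a single quadrilateral overlap. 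Hence the pairwise overlaps contribute $(M-2)(N-2)$ to the subtraction.

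Inclusion--exclusion yields
\begin{equation*}
(M-2)(2N-3)+(N-2)(2M-3)-(M-2)(N-2)=3(N-1)(M-1)-2((M-1)+(N-1))+1,
\end{equation*}
which one verifies by direct algebraic expansion (both sides equal $3MN-5M-5N+8$).

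Finally I would add the singular-point contribution. The second sentence of Fact~4 states that each $x$-$x$ singular point adds one polygon to the vertical swap region of its column, and by the same argument each $y$-$y$ singular point adds one polygon to the horizontal swap region of its row. Assumption A2 guarantees at most one crossing between $x_i^b$ and $y_j^a$ (and between $x_i^a$ and $y_j^b$), so the extra polygons introduced by singular points lie strictly inside the swap strips and never merge with, nor multiply, the single local overlap polygon at the interior vertex $P_{i,j}$. Summing the $ns_{xx}$ and $ns_{yy}$ additional polygons gives the stated total.

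\textbf{Main obstacle.} The principal technical content is the local overlap lemma above: even though A1 restricts $Q_{i,j}$ to four possible cells and A2 bounds the number of chain crossings, one must still verify that the crossings of $x_i^b$ with $y_j^a$ and of $x_i^a$ with $y_j^b$ are positioned compatibly with the vertical and horizontal swap strips so that they enclose one, and only one, polygon in the intersection. This local geometric check drives the entire count; the rest of the argument is bookkeeping and an algebraic identity.
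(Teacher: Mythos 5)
Your proposal is correct and follows essentially the same route as the paper's proof: both decompose the swap region into the vertical and horizontal strips, use Fact 4 (and its transposed version) to count the pieces cut out by the crossing curves, and attribute one additional polygon to each singular intersection point. The only difference is bookkeeping: the paper sums the vertical-strip pieces $(M-2)(2(N-1)-1)$ with the $(M-1)(N-2)$ horizontal-strip pieces lying in the gaps between consecutive vertical strips (a disjoint decomposition), whereas you count full horizontal strips and remove the $(M-2)(N-2)$ corner overlaps by inclusion--exclusion; the two tallies coincide, and your ``exactly one overlap polygon per interior vertex'' lemma is precisely the geometric fact the paper leaves implicit.
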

\begin{proof}
We first consider the case where there is no singular intersection points.
There are $2(N-1)$ horizontal curves, say,  $$y_1^a(t)=y_1^b(t), y_2^a(t), y_2^b(t), \ldots, y_{N-1}^a(t),y_{N-1}^b(t), y_N^a(t)=y_N^b(t).$$
For each pair of $x_i^a(t),x_i^b(t)$, $i=2,\ldots,M-1$, there are $2(N-1)-1$ quadrilateral polygons between them. And there are $N-2$ polygons between
$\max_{x}\{ x_i^a(t),x_i^b(t) \}$, $\min_{x} \{ x_{i+1}^a(t), x_{i+1}^b(t) \}$ for $i=1, M-1$ and $y_j^a(t)$ and $y_j^b(t)$ for $j=2,\ldots,N-1$. Therefore there are in total
\begin{equation}
3(N-1)(M-1)-2((M-1)+(N-1))+1.
\end{equation}

The result is follows because each singular intersection points divides one not simple polygon into two simple polygons. \qed
\end{proof}

Notice that $(N-1)(M-1)$ is the number of the elements of the tessellation of $\T^a$ and $\T^b$. Then \eqref{eq:comp} implies for the CIB/DC method, the swap region can be computed in $O(n)$ time when every the overall singular intersection points is bounded in $\mathcal{O}(n)$, where $n$ is the number of the cells. The complexity depends on the singular intersection points. Such singular intersection points depends on the underlying problem, for example, a rotating flow can bring such singular intersections.

 \subsection{Intersection between a face  and a local frame}

\begin{table}
\centering
\caption{Cases of intersections of a vertical edge with a local frame}
\begin{tabular}{cccc}
\hline
  & \multicolumn{3}{c}{ Intersection \# with horizonal/vertical frames (H\#/V\#)} \\ \cline{2-4}
  & H0  & H1 & H2    \\\hline
 0 & shrunk & shifted & stretched  \\
 V1& diagonally shrunk & diagonally shifted  & diagonally stretched \\
 V2& --& shifted & stretched \\
 V3& --& --   &  diagonally stretched \\
 \hline
\end{tabular}
\label{tab:case}
\end{table}

 For the two admissible quadrilateral mesh $\T^a$ and $\T^b$ of the same connectivity, we classify the intersections between a face $F^b_{i,j+\frac{1}{2}}$ and the local frame $\LF^a_{i,j+\frac{1}{2}}$ into six groups according to the relative position of the vertices $Q_{i,j}$ and $Q_{i,j+1}$  in the local frame $\LF^a_{i,j+\frac{1}{2}}$. The point $Q_{i,j+1}$ can locate in $A_1$,$A_2$, $A_3$ and $A_4$ in the local frame of $\LF_{i,j+\frac{1}{2}}$ in Figure~\ref{fig:abcd}, and the point $Q_{i,j}$ can locate in $B_1$,$B_2$, $B_3$ and $B_4$ region.  Compared with the face $F^a_{i,j+\frac{1}{2}}$, the face $F^b_{i,j+\frac{1}{2}}$ can be
 \begin{itemize}
 \item shrunk: $A_3B_2$ and $A_4B_1$;
 \item shifted: $A_1B_1$, $A_2B_2$, $A_3B_3$ and $A_4,B_4$;
 \item stretched: $A_1B_2$ and $A_2B_3$;
 \item diagonally shrunk: $A_3B_1$ and $A_4B_3$;
 \item diagonally shifted: $A_1B_2$, $A_2B_1$, $A_3B_2$, and $A_4B_3$;
 \item diagonally stretched: $A_1B_3$ and $A_2B_4$.
 \end{itemize}

 And then for each group, we choose one representor/generator to check the intersection numbers between the face $F^a_{i,j+\frac{1}{2}}$ and the local frame $\LF^a_{i,j+\frac{1}{2}}$. Finally,  according to intersection numbers of the $F^b_{i,j+\frac{1}{2}}$ with the horizonal edges and vertical edges in the local frame $\LF^a_{i,j+\frac{1}{2}}$, we classify the intersection cases into six groups in Table \ref{tab:case} and 17 symmetric cases in Fig.~\ref{fig:case}.
 \begin{fact}
 Let $\T^a$ and $\T^b$ be two admissible quadrilateral mesh of the same structure. Under the Assumption A1, A2 and A3, an inner edge $F^b_{i,j+\frac{1}{2}}$ of $\T^b$ has 17 symmetric ways to intersect with the local frame $\LF^a_{i,j+\frac{1}{2}}$. A swept/fluxing area has 17 possible symmetric cases with respect to the local frame.
 \end{fact}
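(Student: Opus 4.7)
The plan is to exhaustively enumerate the geometric configurations of $F^b_{i,j+\frac{1}{2}}$ inside the local frame $\LF^a_{i,j+\frac{1}{2}}$ and then collapse the list by the reflective symmetry of the frame. The enumeration will be finite because Assumptions A1--A3 sharply restrict where the endpoints of $F^b_{i,j+\frac{1}{2}}$ may lie and how many times the edge can cross the seven edges of the frame.

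First I would use Assumption A1 to locate the endpoints: $Q_{i,j+1}$ lies in one of the four cells $A_1,A_2,A_3,A_4$ meeting at $P_{i,j+1}$ and $Q_{i,j}$ in one of the four cells $B_1,B_2,B_3,B_4$ meeting at $P_{i,j}$, as labelled in Fig.~\ref{fig:abcd}. This gives a priori $4\times 4=16$ endpoint labellings $A_kB_l$. For each labelling I would record the intersection count $(H,V)$ of the straight segment from $B_l$ to $A_k$ with the four horizontal frame edges $F^a_{i\pm\frac{1}{2},j},\,F^a_{i\pm\frac{1}{2},j+1}$ and the two vertical neighbours $F^a_{i,j-\frac{1}{2}},\,F^a_{i,j+\frac{3}{2}}$. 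Assumption A2 forces $H\in\{0,1,2\}$ and $V\in\{0,1,2,3\}$, while Assumption A3 keeps every crossing in the relative interior of the crossed old face, so the count is well defined.

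Next, I would group the labellings by $(H,V)$ into the six qualitative types (shrunk, shifted, stretched and their three diagonal variants) displayed in Table~\ref{tab:case}. A careful walk through the six types, recording for each admissible $A_kB_l$ on which sides of the frame the edge actually enters and exits, yields the $34$ distinct pictures of Fig.~\ref{fig:case}. Finally I would apply the reflective symmetry of $\LF^a_{i,j+\frac{1}{2}}$ about the segment $P_{i,j}P_{i,j+1}$ and, where relevant, about its perpendicular bisector: under this involution each configuration is paired with its mirror image, so the $34$ raw configurations collapse into $17$ equivalence classes. The statement about swept/fluxing areas follows at once, since by the definition in the excerpt $\partial F_{i,j+\frac{1}{2}}=Q_{i,j+1}Q_{i,j}P_{i,j}P_{i,j+1}$ is the quadrilateral spanned by $F^b_{i,j+\frac{1}{2}}$ and $F^a_{i,j+\frac{1}{2}}$, so its shape is in bijection with that of $F^b_{i,j+\frac{1}{2}}$, and the same $17$ symmetric cases classify it.

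The hard part will be the middle step: verifying rigorously that the $16$ endpoint labellings really produce $34$ geometric sub-cases, no more and no fewer. A few labellings are excluded by A2 combined with the mesh-admissibility of $\T^b$ (e.g. a would-be segment from $B_l$ to $A_k$ that would require two crossings of the same $y^a$-curve); others are geometrically equivalent once the frame-crossing pattern is fixed; still others split into several sub-cases according to which frame edge is actually crossed. The cleanest bookkeeping appears to be row-by-row through Table~\ref{tab:case}, tabulating the admissible $(A_k,B_l)$ pairs for each entry, with appeal to Fig.~\ref{fig:case} for the pictorial record; the finite check then closes the argument.
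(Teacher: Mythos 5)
Your proposal follows essentially the same route as the paper: locate the endpoints in the regions $A_k$, $B_l$ permitted by Assumption A1, sort the resulting configurations into the six qualitative types of Table~\ref{tab:case} by their horizontal/vertical crossing counts, enumerate the 34 configurations of Fig.~\ref{fig:case}, and halve the count by the reflective symmetry of the local frame. One small correction: the vertical count $V$ must also include crossings of the central old edge $F^a_{i,j+\frac{1}{2}}$, not only $F^a_{i,j-\frac{1}{2}}$ and $F^a_{i,j+\frac{3}{2}}$, since otherwise $V\le 2$ and the diagonally stretched case H2V3 could never occur.
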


 \begin{figure}[!t]
 \centering
 \subfloat[ H0V0\label{fig:h0v0}]{\includegraphics[height=1.6cm,width=1.2cm]{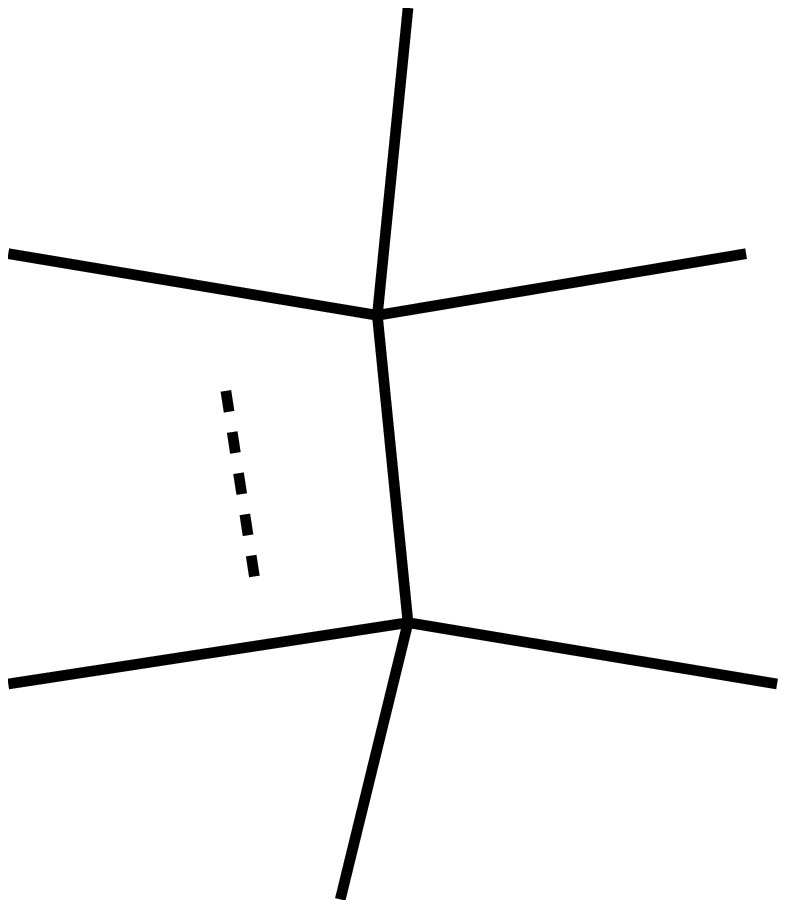}
                \includegraphics[height=1.6cm,width=1.2cm]{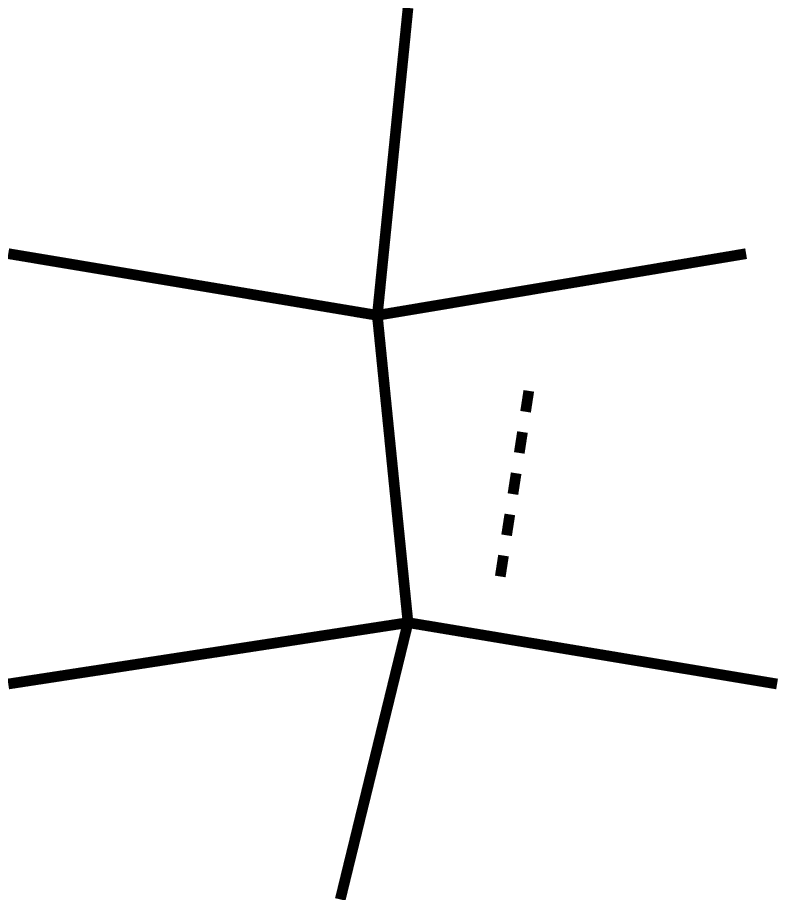}} \,
\subfloat[ H0V1\label{fig:h0v1}]{\includegraphics[height=1.6cm,width=1.2cm]{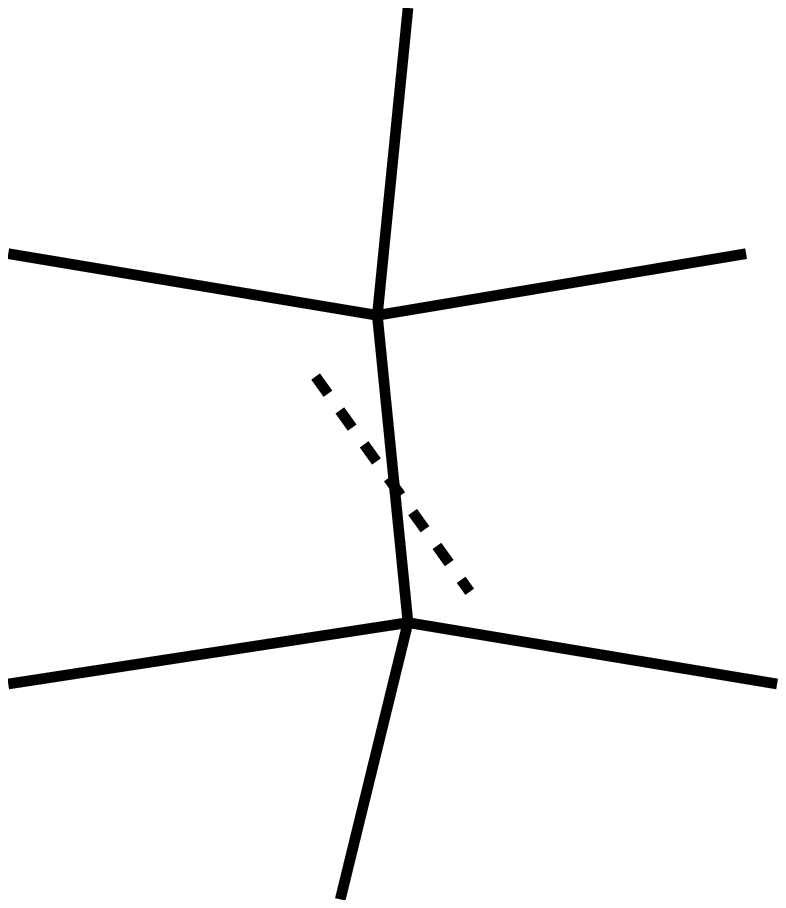}
                \includegraphics[height=1.6cm,width=1.2cm]{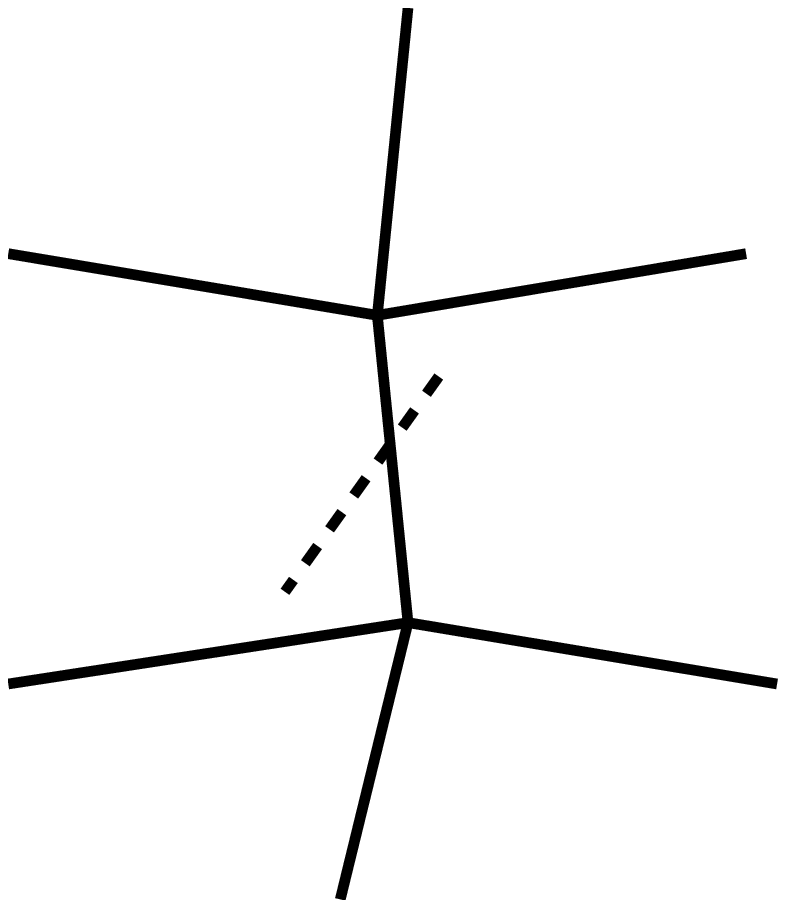}}
\center{ shrunk and diagonally shrunk cases} \\
 \centering
 \subfloat[H1V0\label{fig:h1v0}]{\includegraphics[height=1.6cm,width=1.2cm]{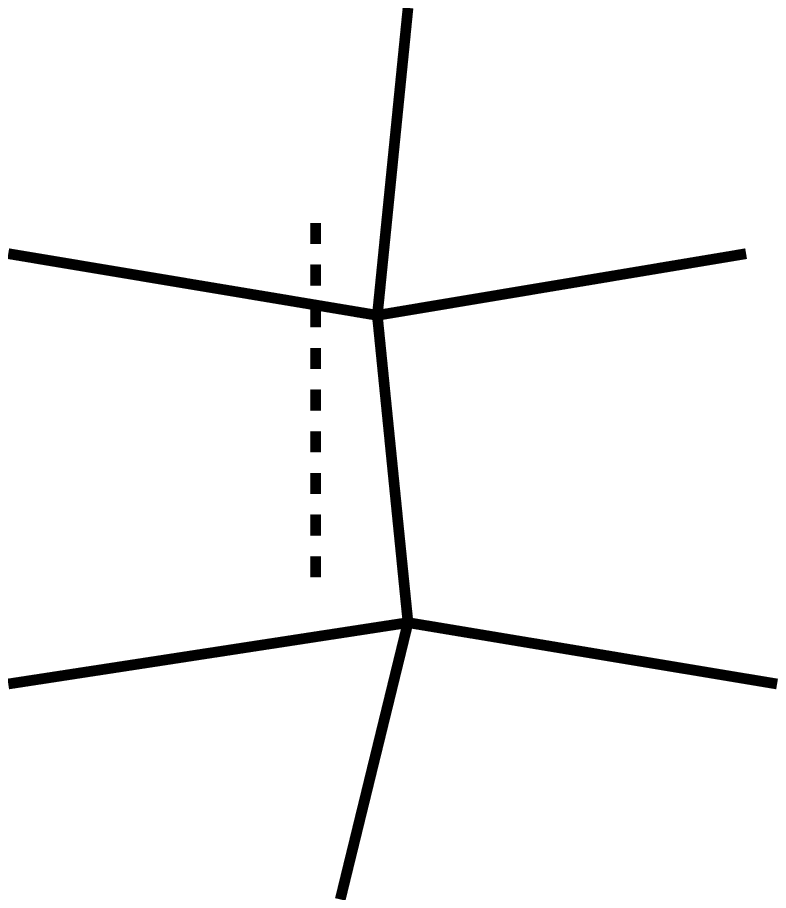}
                \includegraphics[height=1.6cm,width=1.2cm]{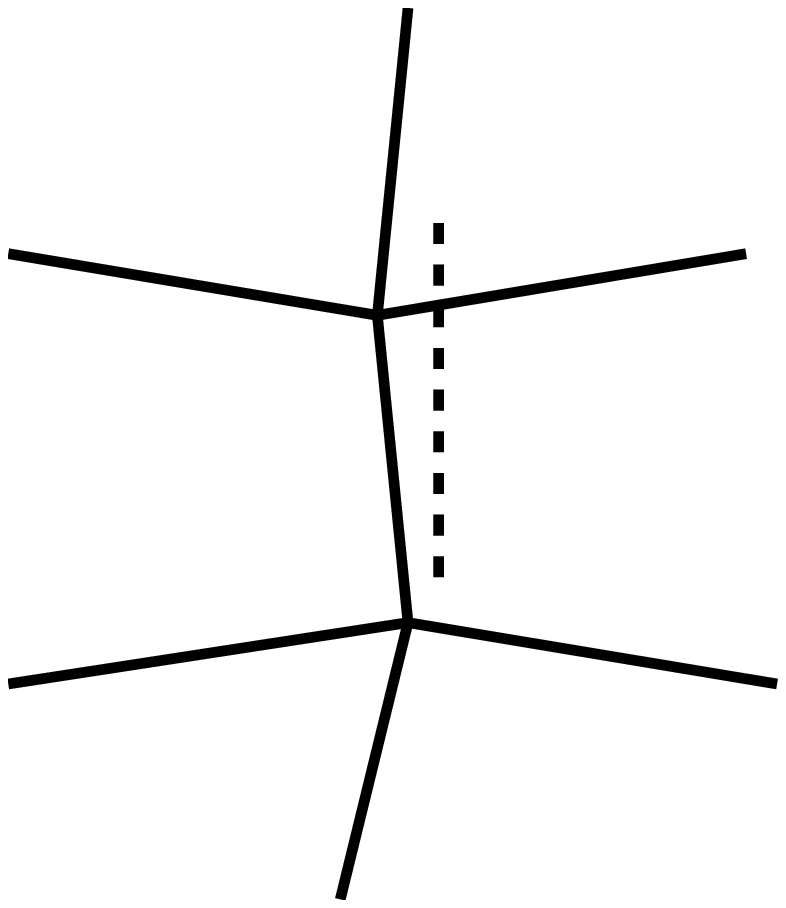}} \,
\subfloat[H1V0]{\includegraphics[height=1.6cm,width=1.2cm]{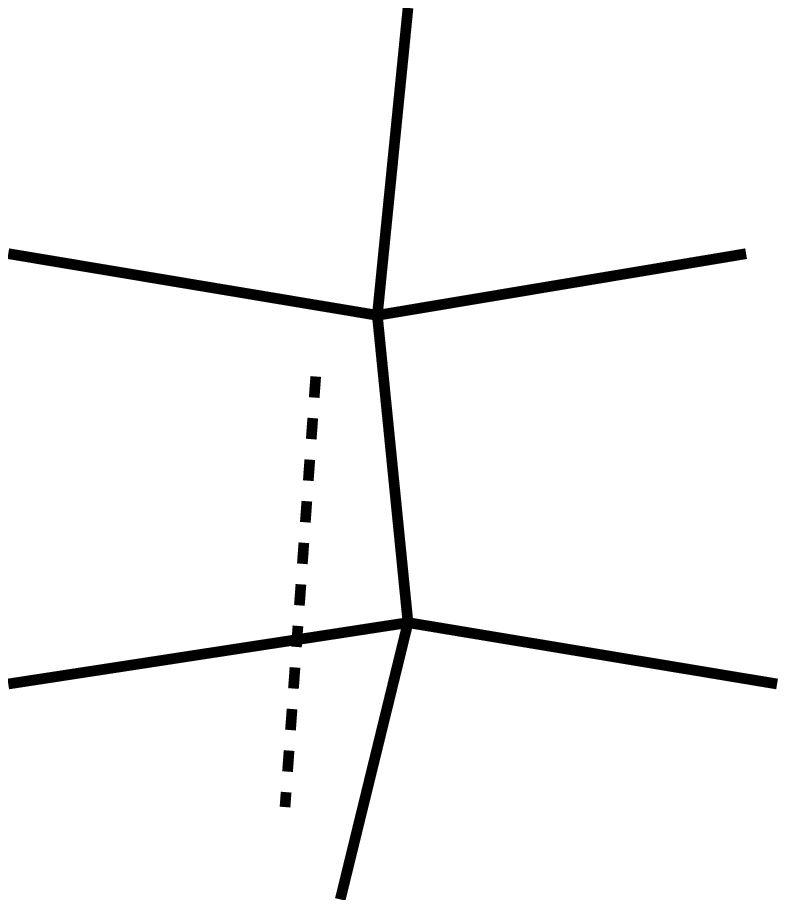}
                \includegraphics[height=1.6cm,width=1.2cm]{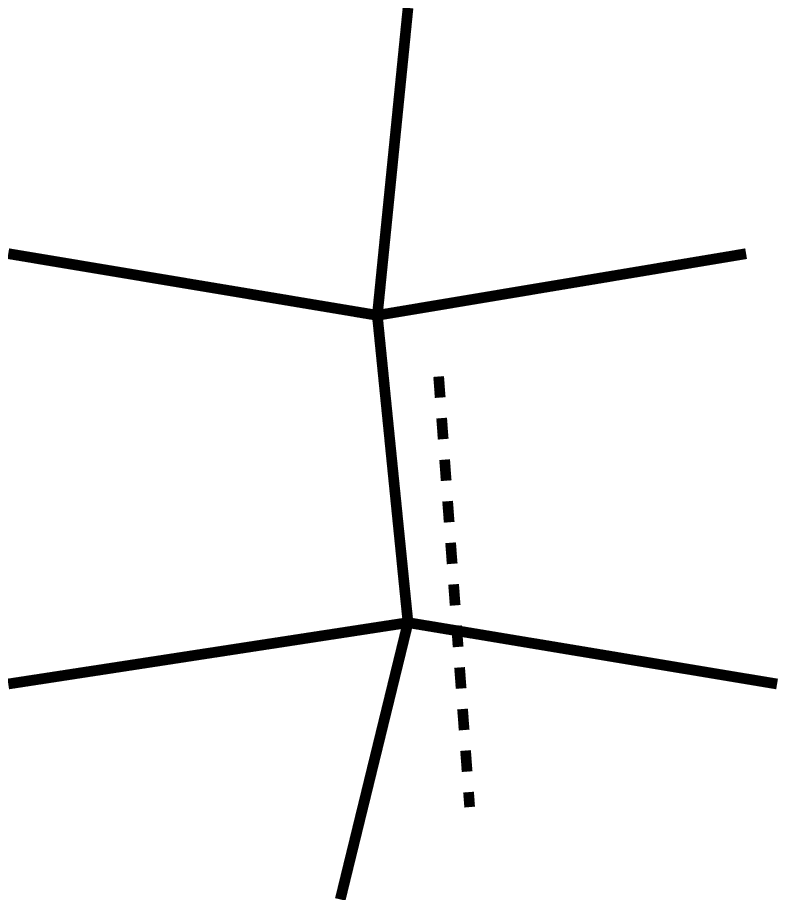}}
\subfloat[H1V2A]{\includegraphics[height=1.6cm,width=1.2cm]{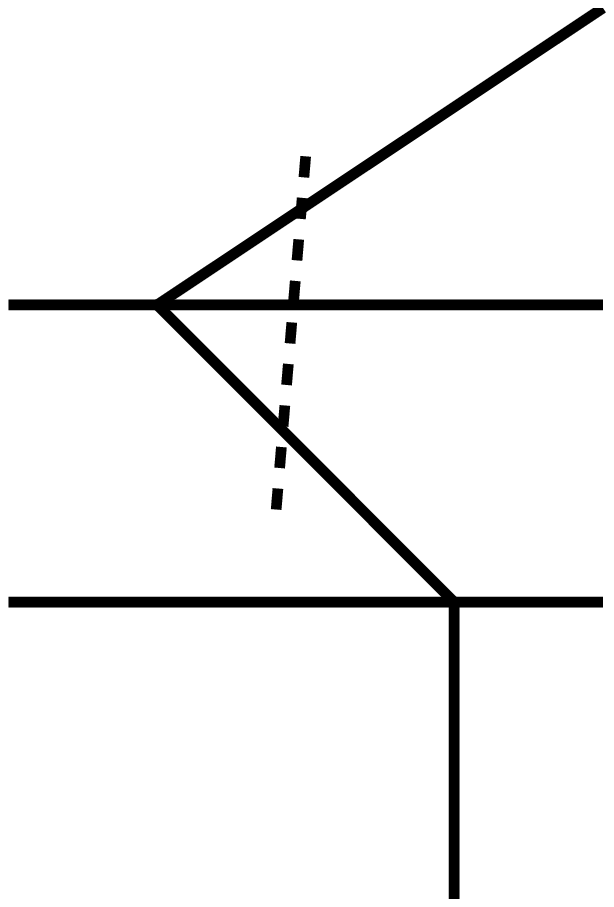}
                \includegraphics[height=1.6cm,width=1.2cm]{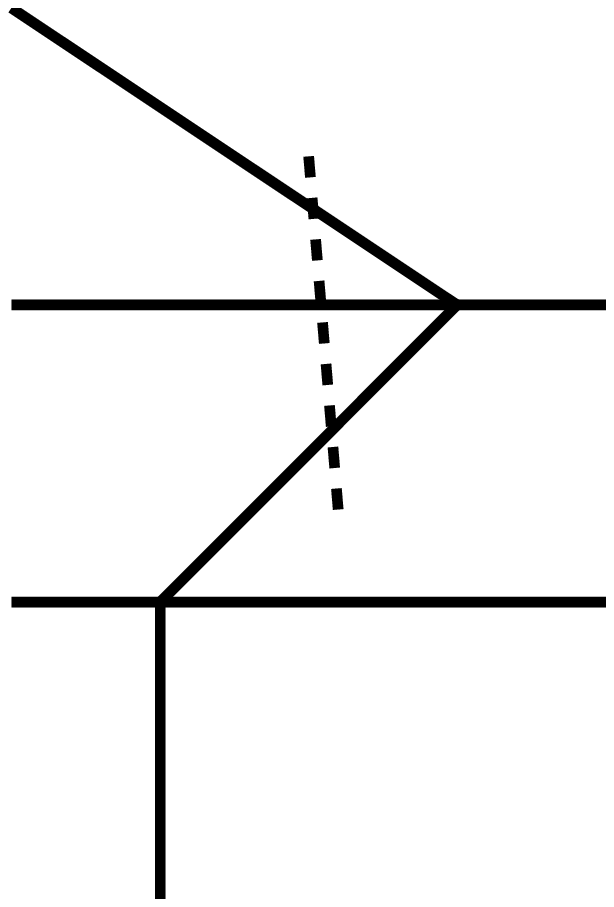}} \,
\subfloat[H1V2B\label{fig:h1v2}]{\includegraphics[height=1.6cm,width=1.2cm]{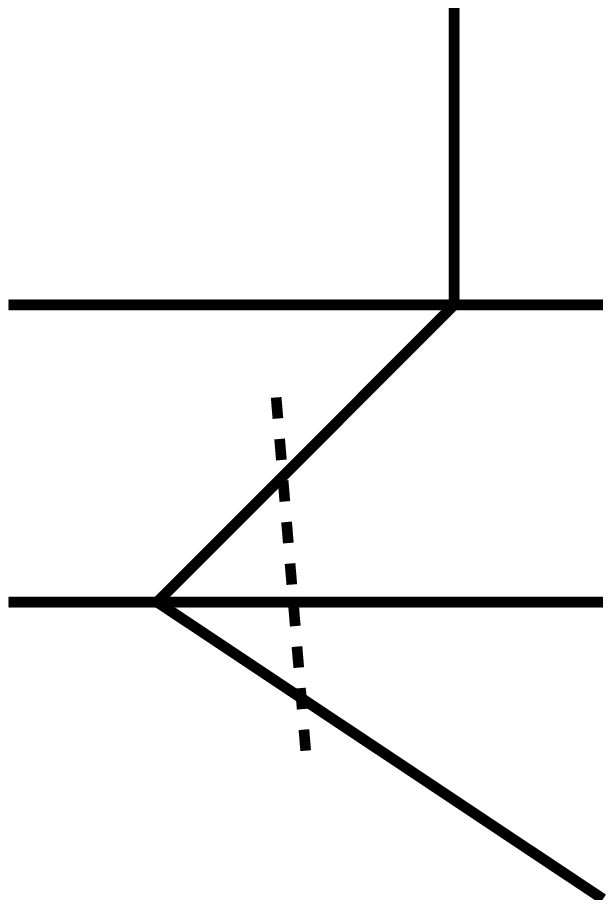}
                \includegraphics[height=1.6cm,width=1.2cm]{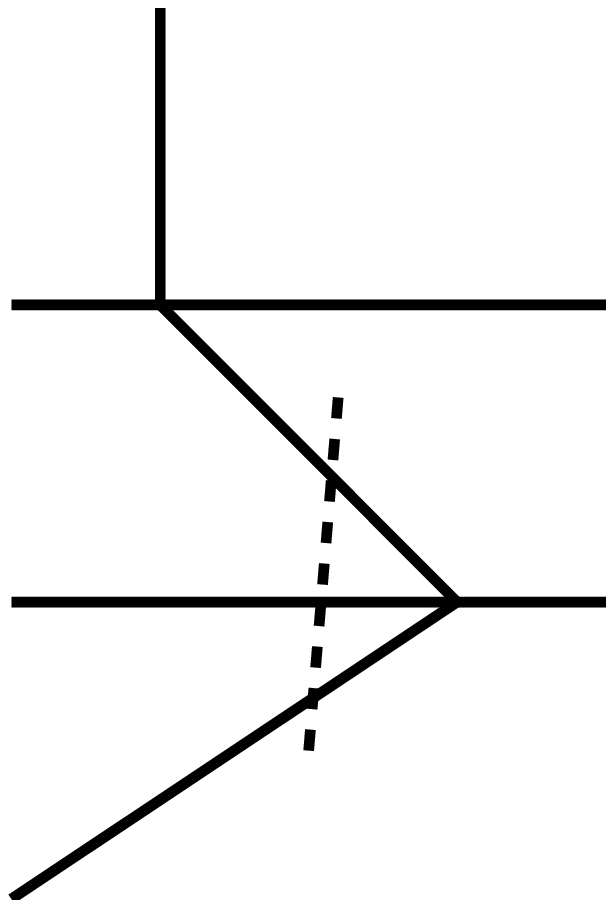} \,
                }
 \center{ shifted cases } \\
  \subfloat[H1V1A\label{fig:h1v1}]{\includegraphics[height=1.6cm,width=1.2cm]{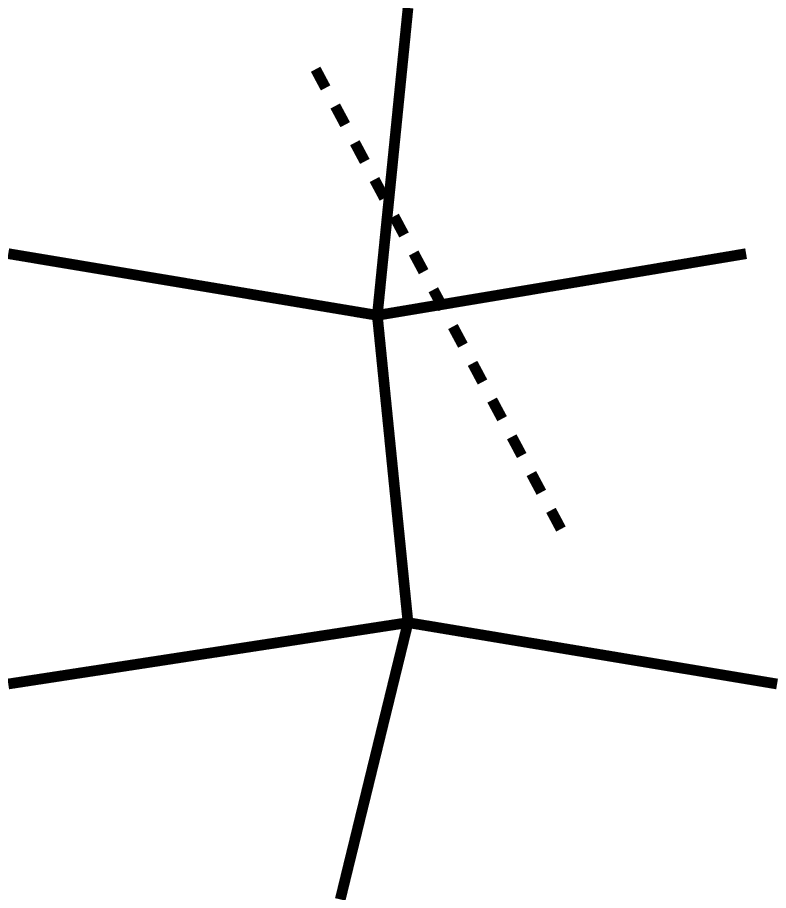}
                \includegraphics[height=1.6cm,width=1.2cm]{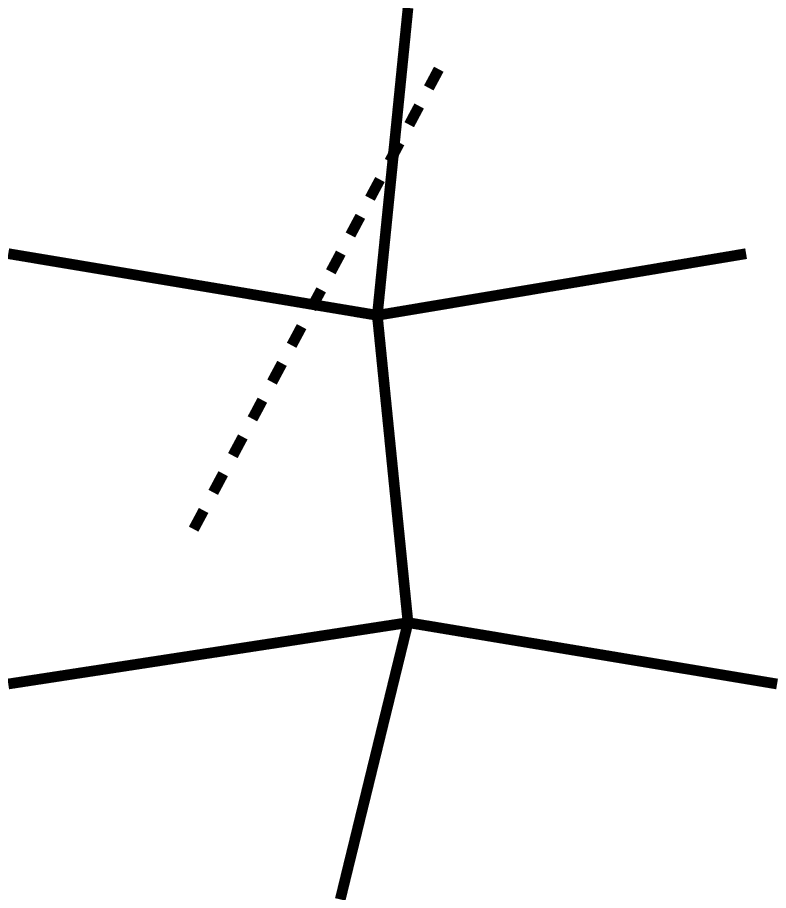}}\,
\subfloat[H1V1A]{\includegraphics[height=1.6cm,width=1.2cm]{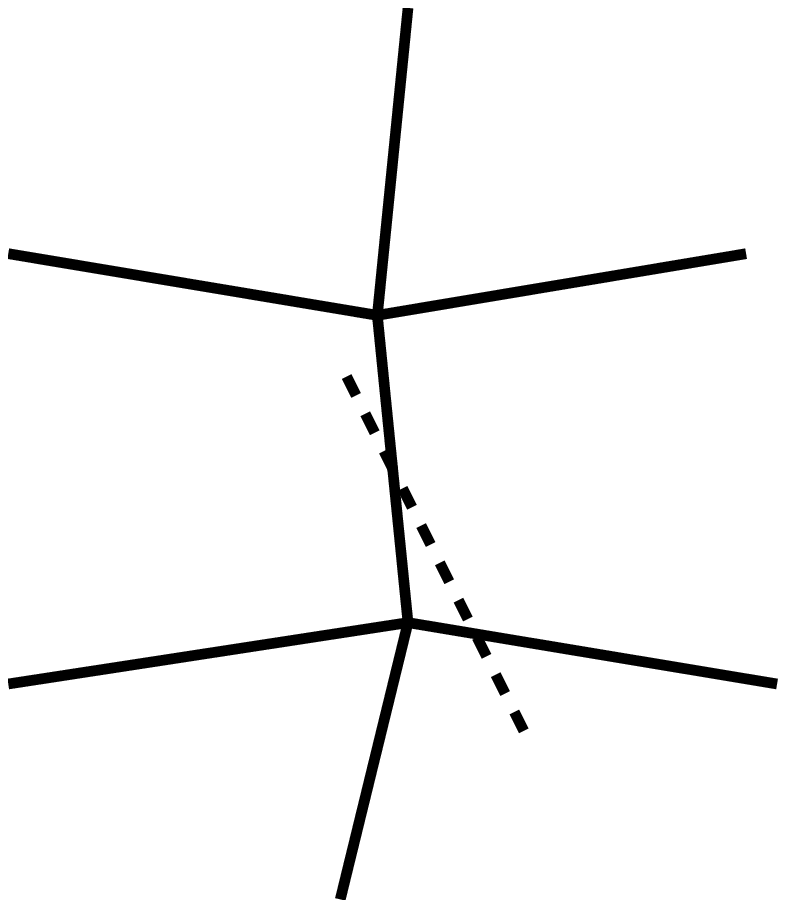}
                \includegraphics[height=1.6cm,width=1.2cm]{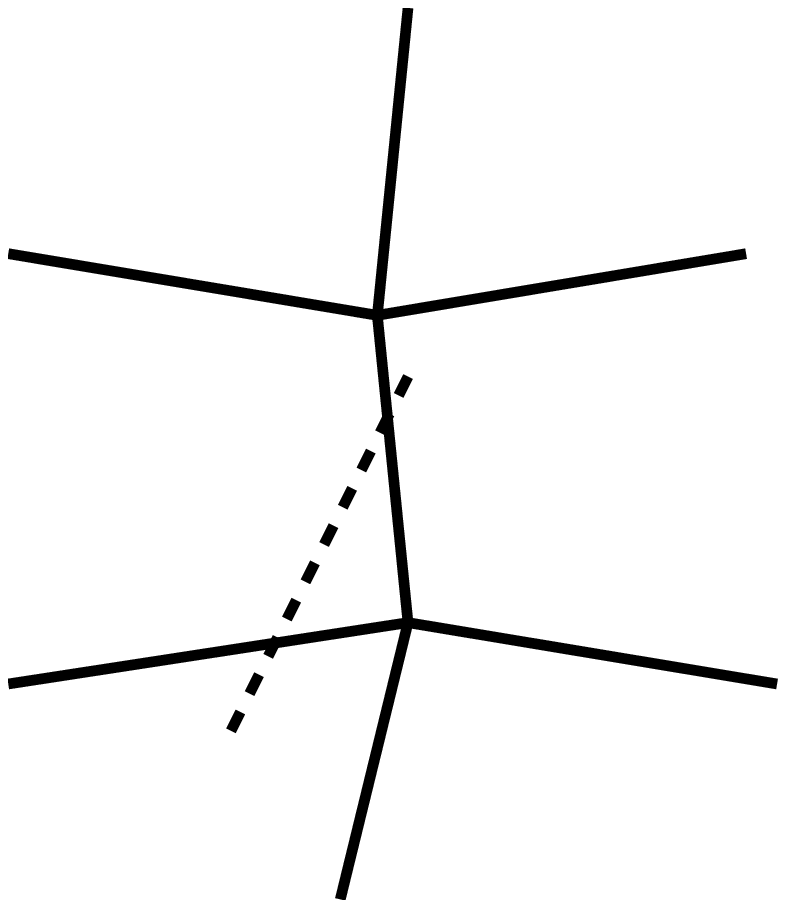}}\,
\subfloat[H1V1B]{\includegraphics[height=1.6cm,width=1.2cm]{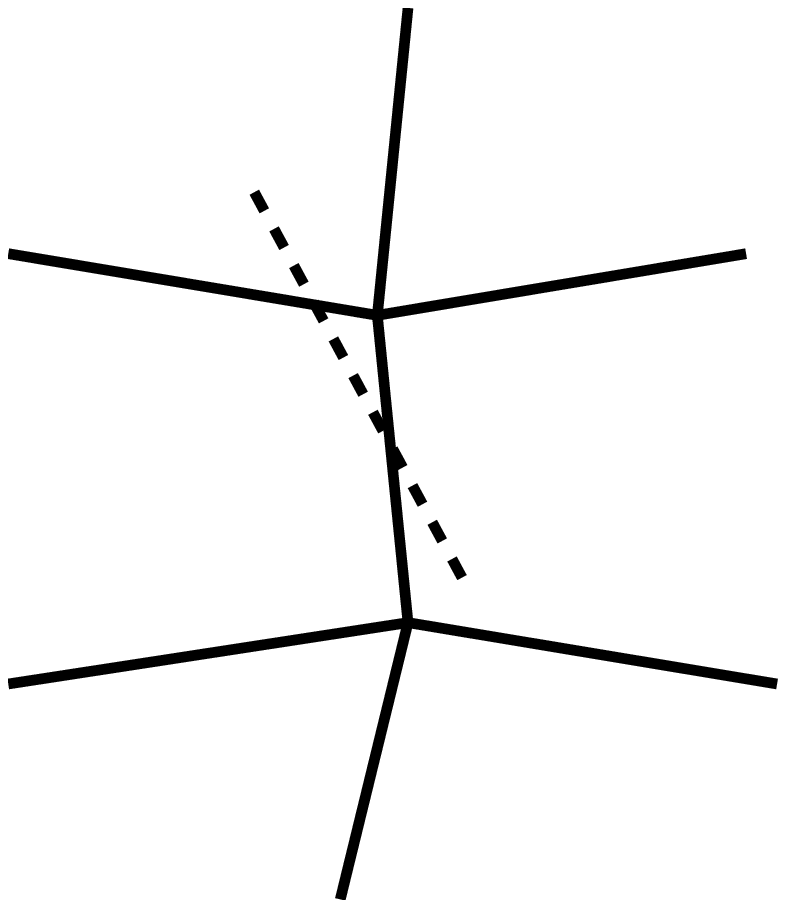}
                \includegraphics[height=1.6cm,width=1.2cm]{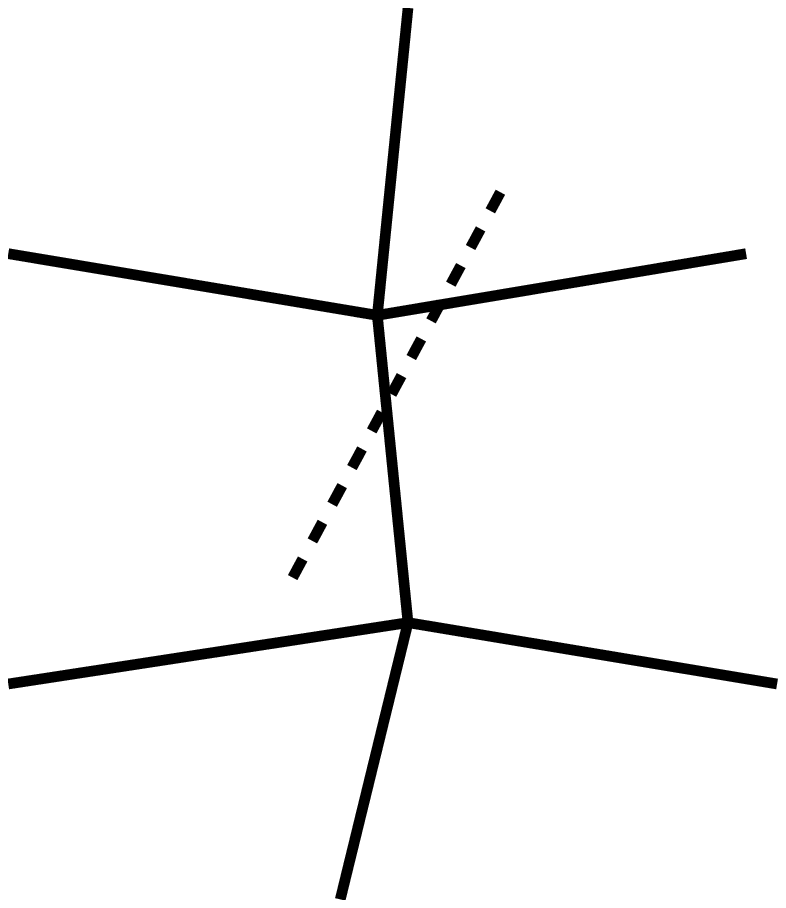}}\,
\subfloat[H1V1B\label{fig:h1v1c}]{\includegraphics[height=1.6cm,width=1.2cm]{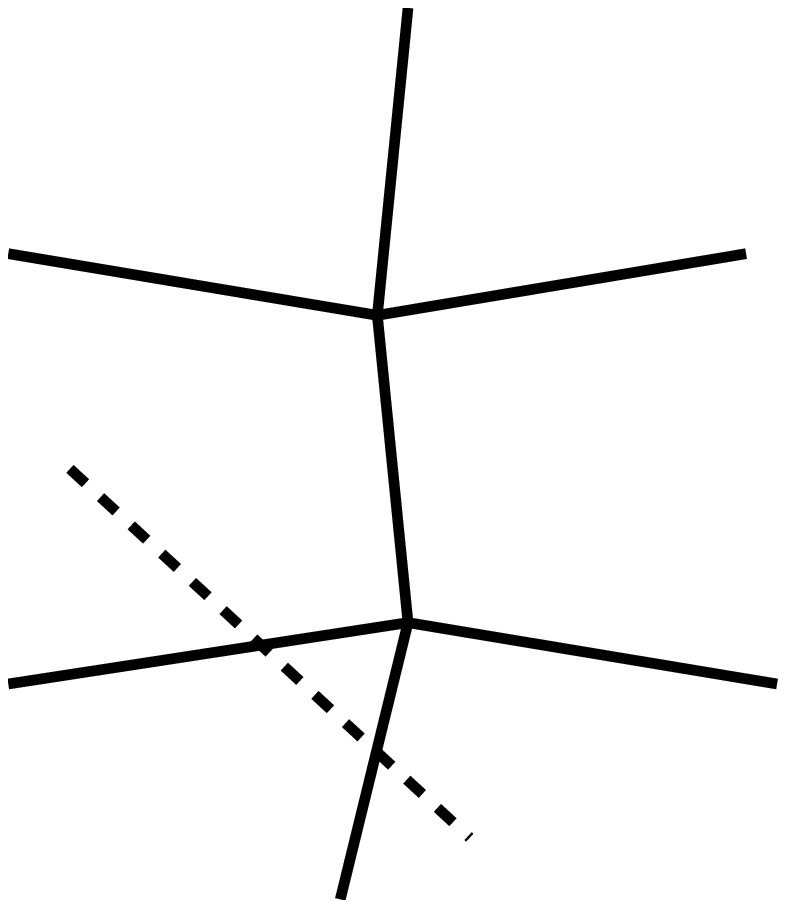}
                \includegraphics[height=1.6cm,width=1.2cm]{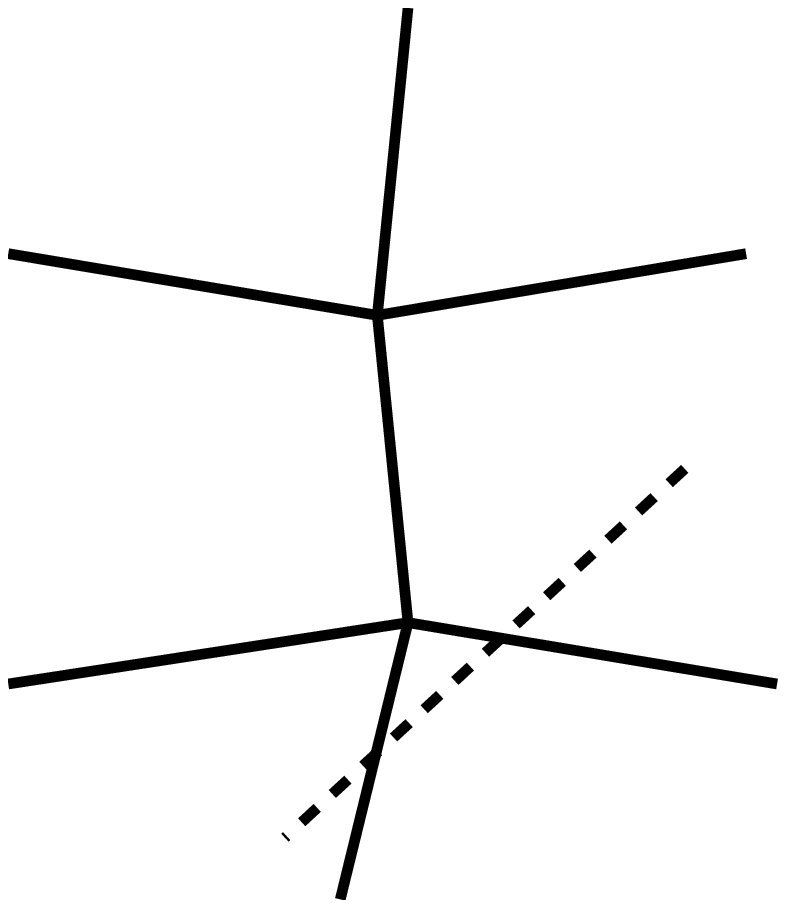}}
 \center{ diagonally shifted cases}\\

 \subfloat[H2V0\label{fig:h2v0}]{\includegraphics[height=1.6cm,width=1.2cm]{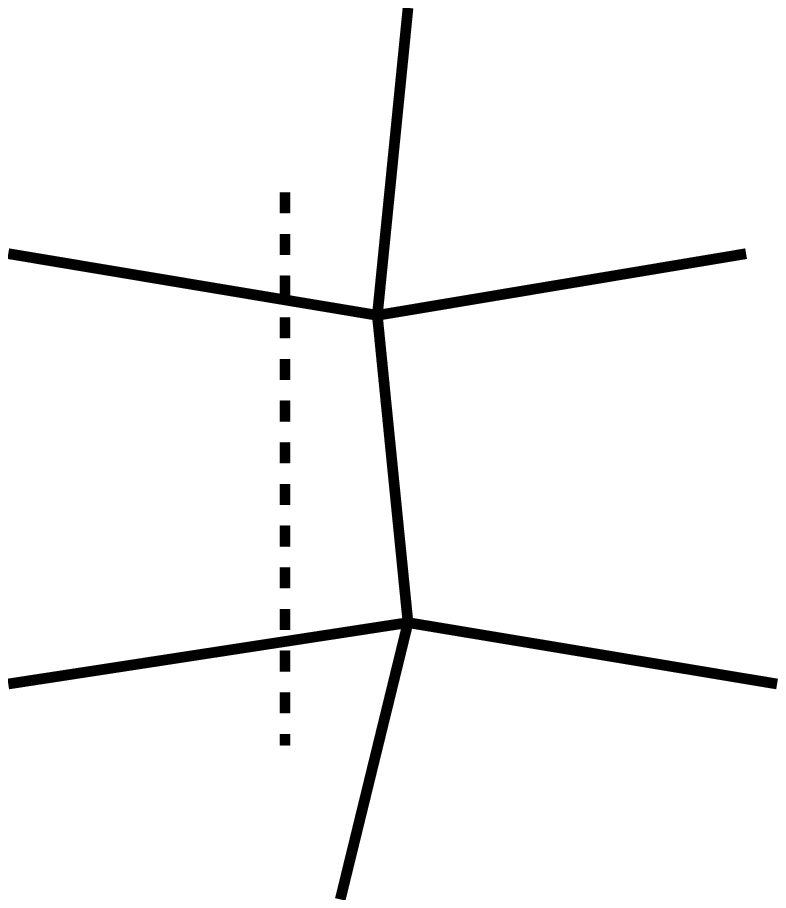}
            \includegraphics[height=1.6cm,width=1.2cm]{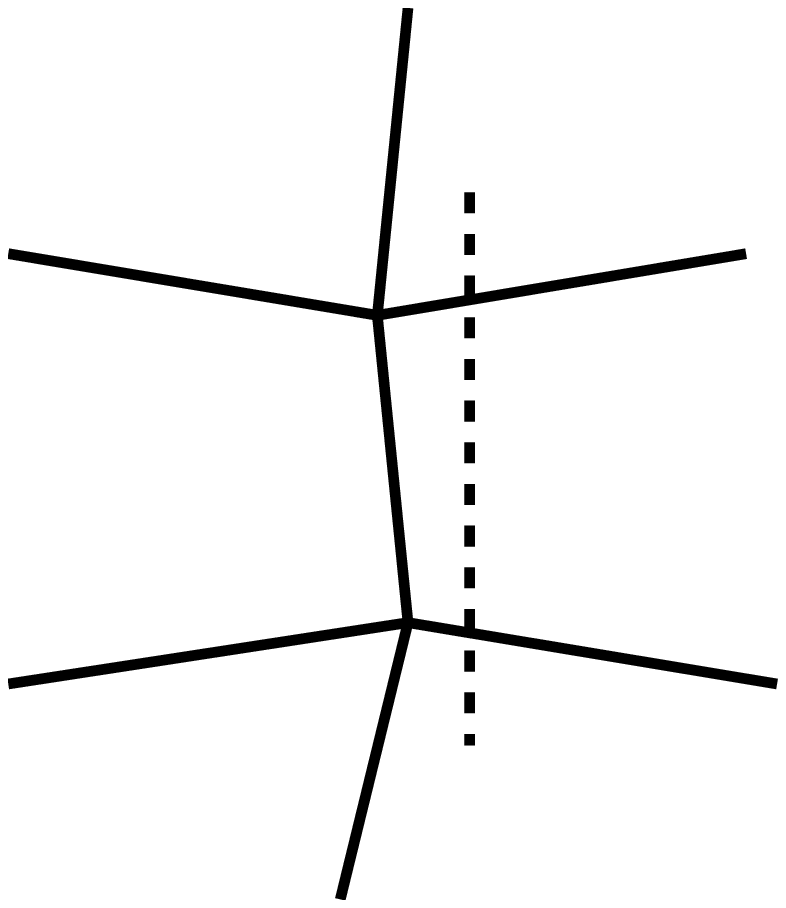}} \,
 \subfloat[H2V2A]{\includegraphics[height=1.6cm,width=1.2cm]{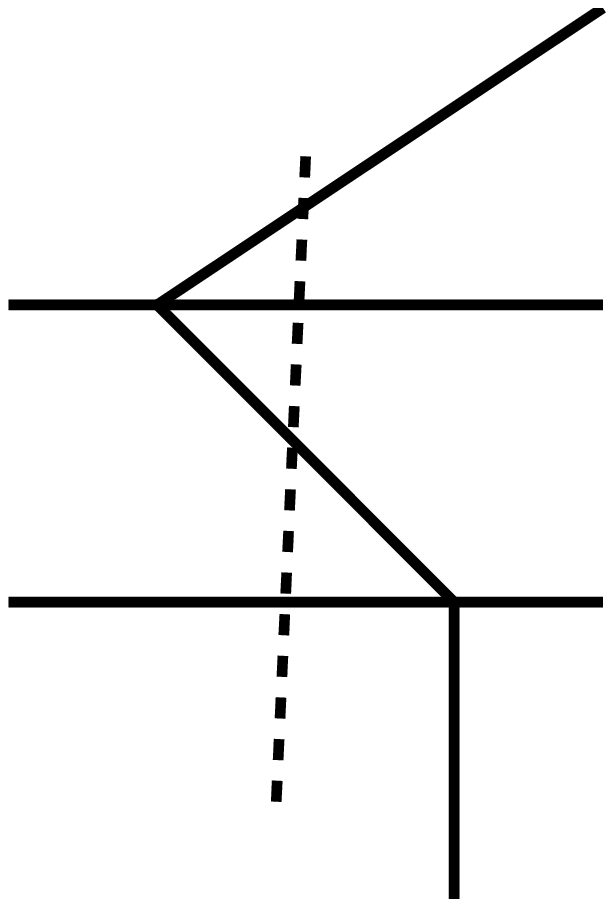}
                \includegraphics[height=1.6cm,width=1.2cm]{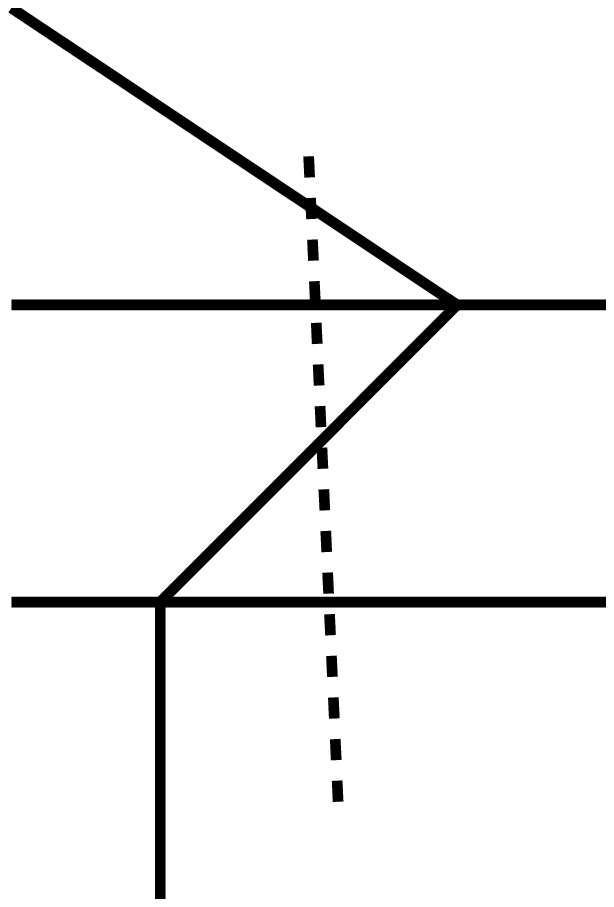}} \,
 \subfloat[H2V2B\label{fig:h2v2b}]{\includegraphics[height=1.6cm,width=1.2cm]{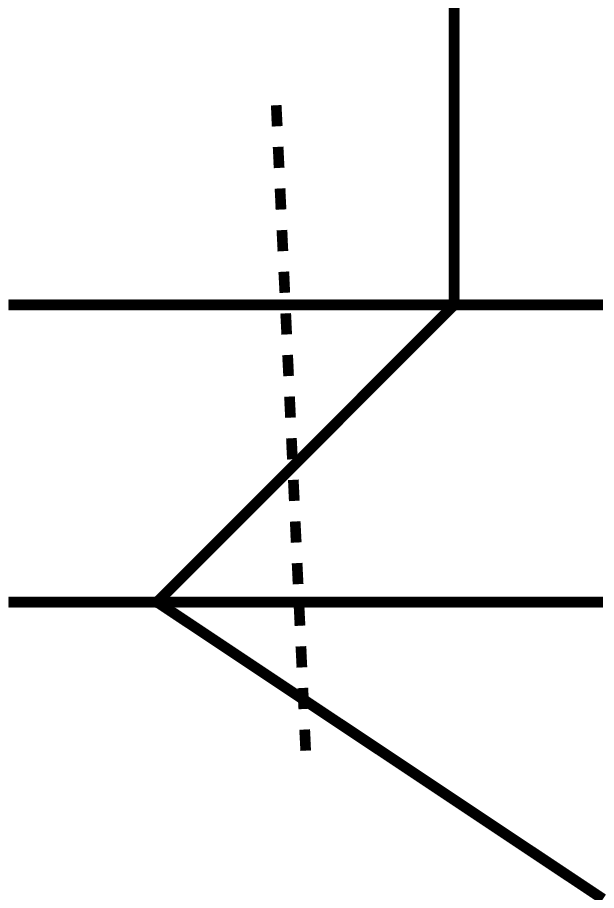}
                \includegraphics[height=1.6cm,width=1.2cm]{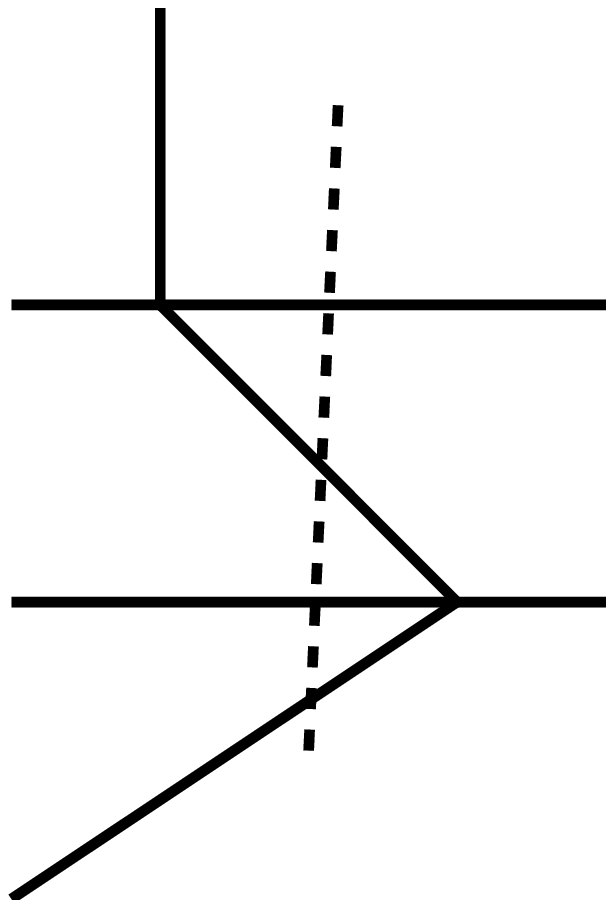}} \,
 \center{stretched cases.}

\subfloat[H2V1A\label{fig:h2v1}]{\includegraphics[height=1.6cm,width=1.2cm]{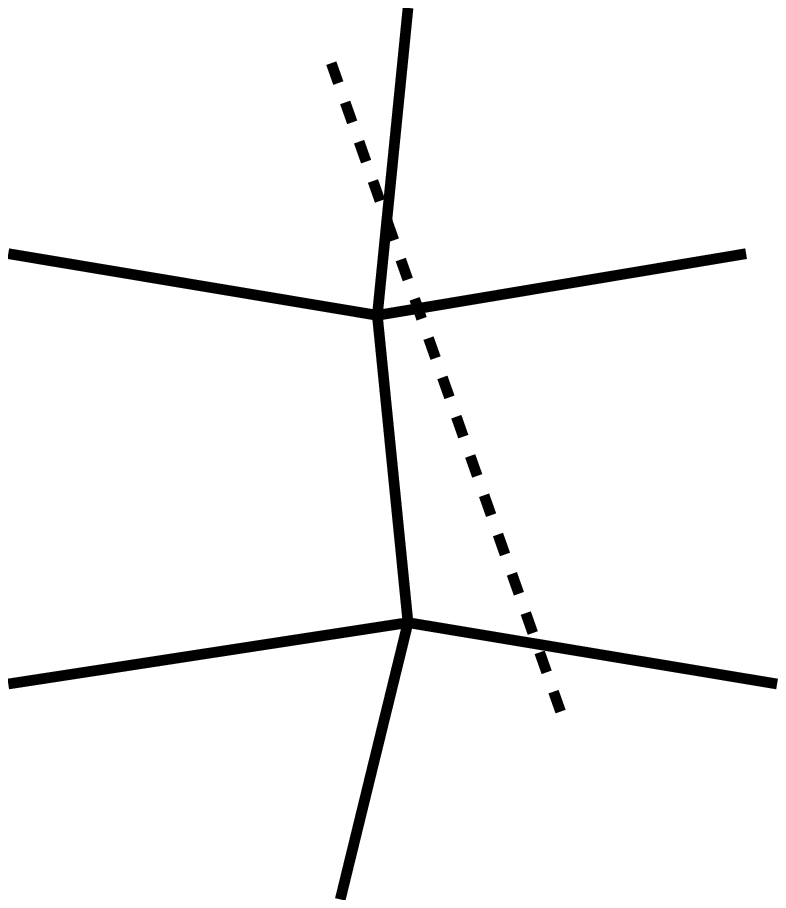}
            \includegraphics[height=1.6cm,width=1.2cm]{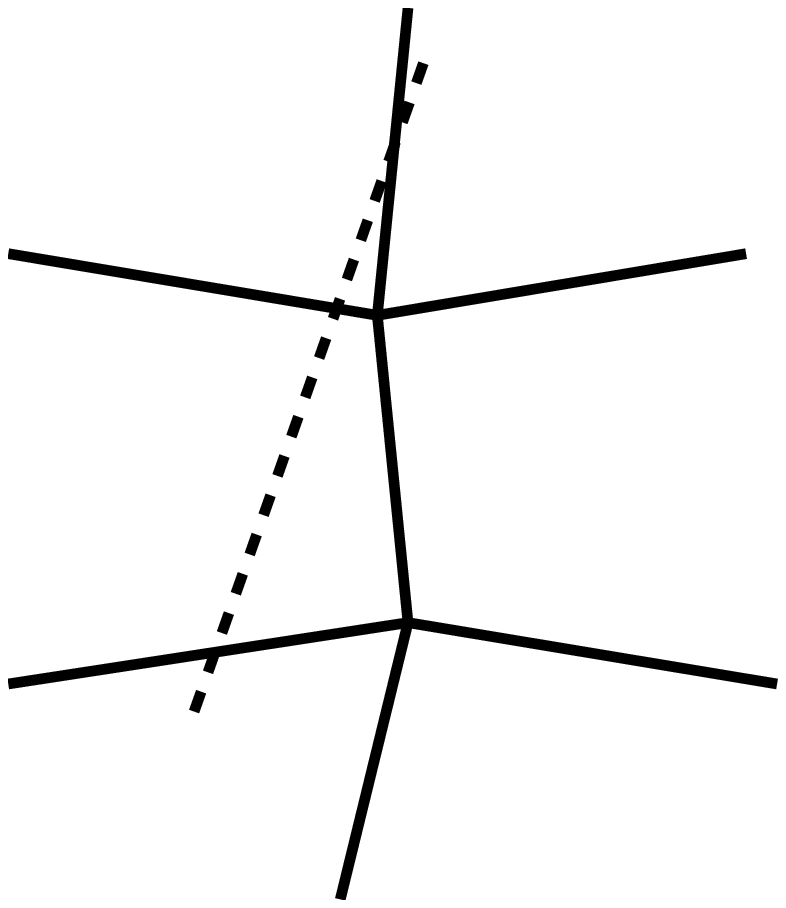}} \,
\subfloat[H2V1B]{\includegraphics[height=1.6cm,width=1.2cm]{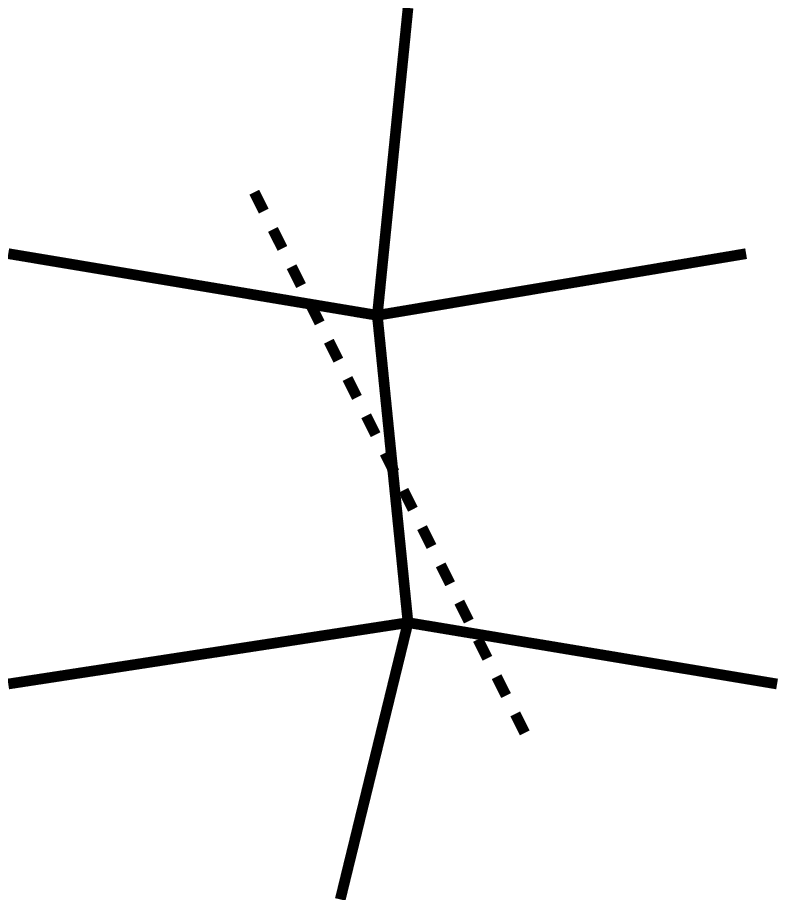}
            \includegraphics[height=1.6cm,width=1.2cm]{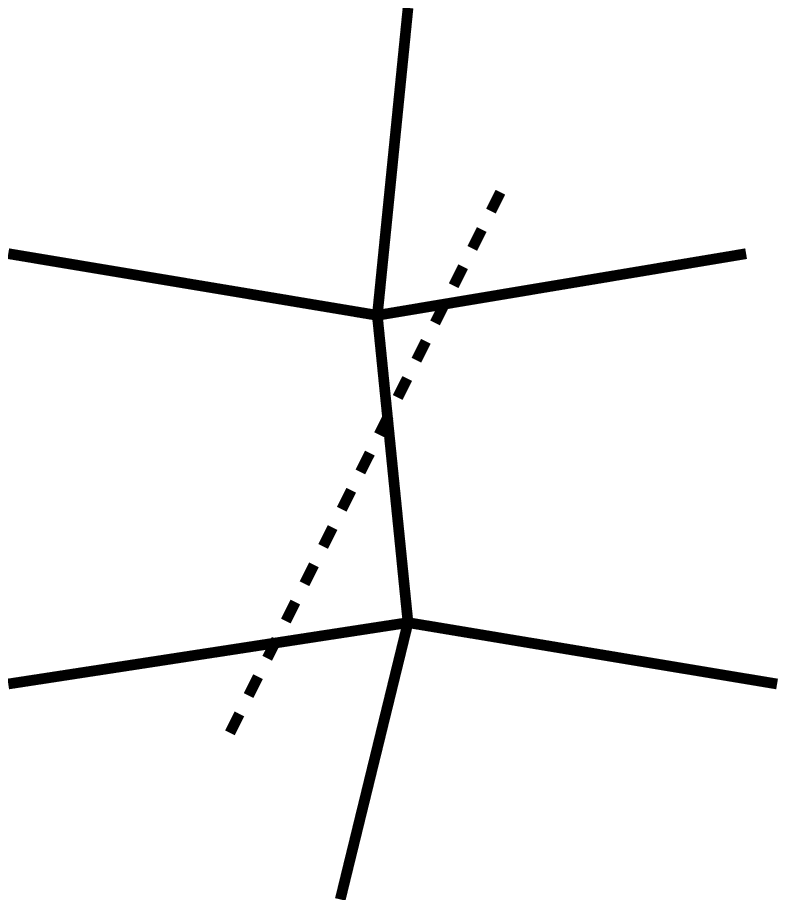}} \,
\subfloat[H2V1C]{\includegraphics[height=1.6cm,width=1.2cm]{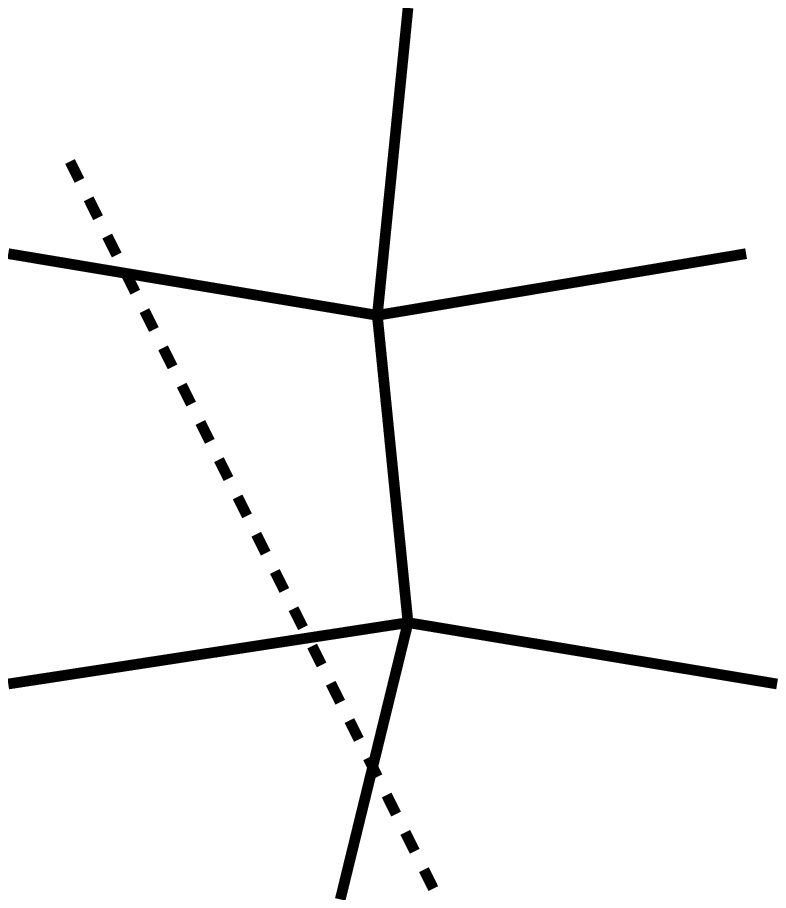}
            \includegraphics[height=1.6cm,width=1.2cm]{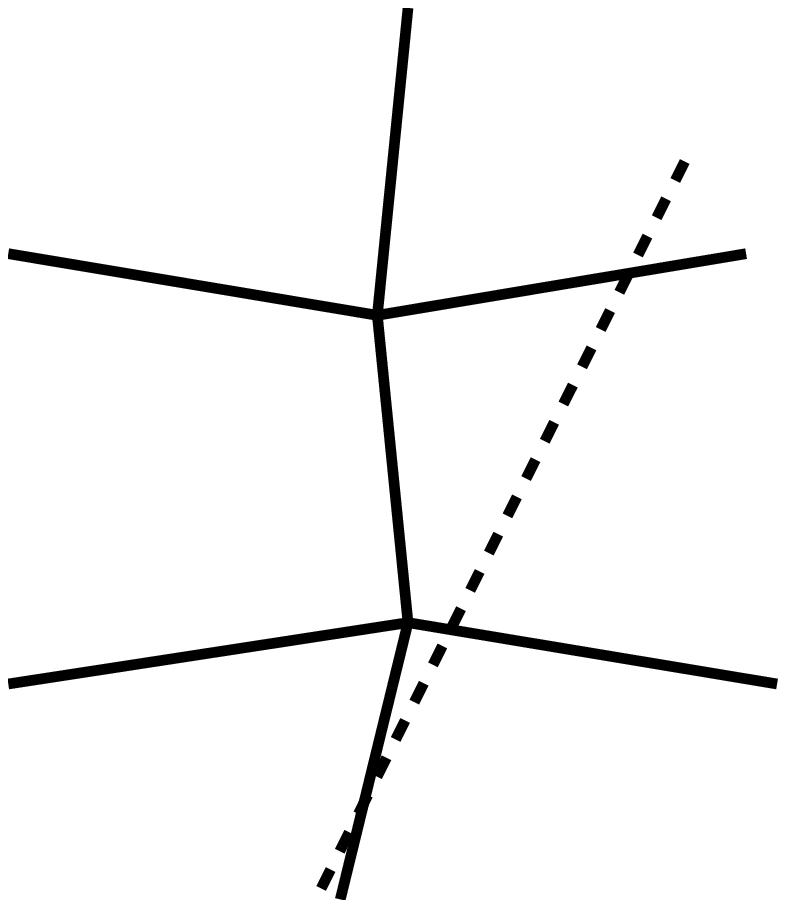}}
\subfloat[H2V3\label{fig:h2v3}]{\includegraphics[height=1.6cm,width=1.2cm]{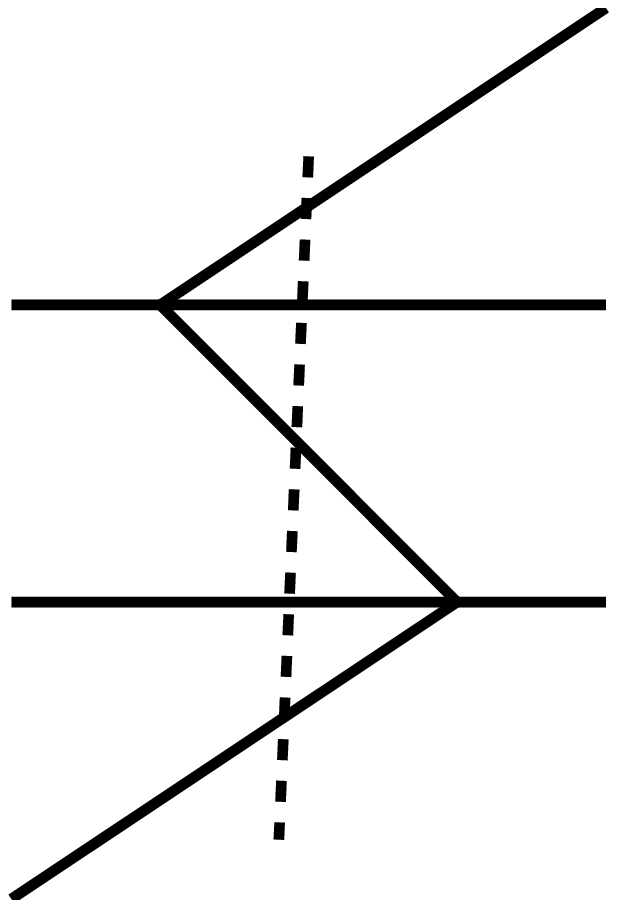}
               \includegraphics[height=1.6cm,width=1.2cm]{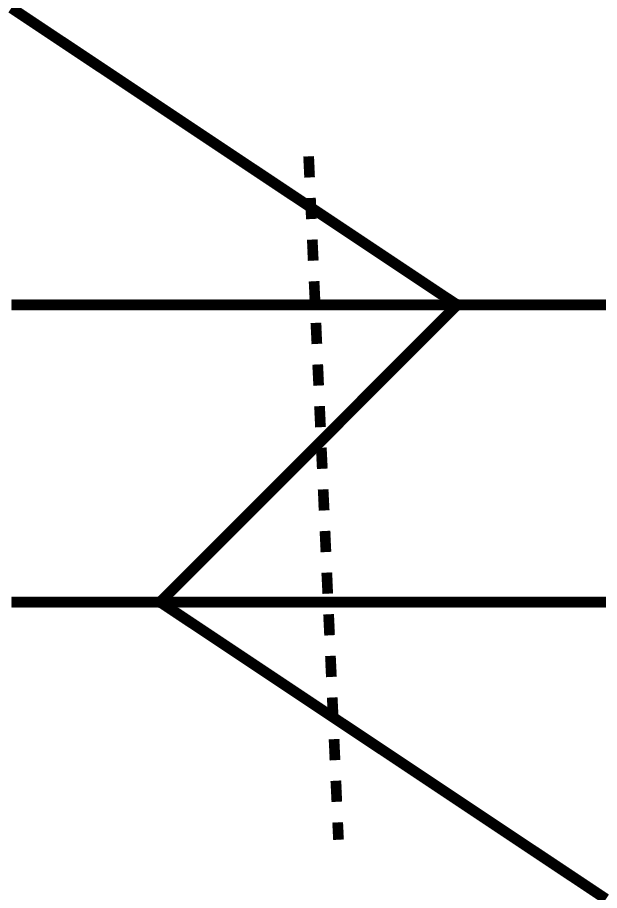}}
\center{diagonally stretched cases}
\caption{Intersections between a face(dashed line) and a local frame}
\label{fig:case}
\end{figure}

\subsection{Fluxing/swept area and local swap region in a local frame}
For the FB/DC method, once the intersection between $F^b_{i,j+\frac{1}{2}}$ between $\LF^a_{i,j+\frac{1}{2}}$ is determined, then the shape of the swept area $\partial F_{i,j+\frac{1}{2}}$ will be determined. There are total 17 symmetric cases as shown in Fig. \ref{fig:case}. On contrast, the local swap region requires additional effort to be identified.

The vertex $Q_{i,j}$ lies on the line segment $y^b_{j}(t)$, according the Assumption A2, $y^b_{j}(t)$ can only have one intersection with $x^a_i(t)$. This intersection point will be referred to as $V_1$.
The line segment $Q_{i,j}V_1$ can have 0 or 1 intersection with the local frame $\LF_{i,j+\frac{1}{2}}$ except $V_1$ itself; the intersection point, if any, will be denoted as $V_2$.

Table \ref{tab:V} describes how the line segment $Q_{i,j}V_1$ intersects with the local frame according to the relative position of $Q_{i,j}$. The north and south boundary of the local swap region therefore can have 1 or 2 intersection with the local frame.  Therefore each case in Figure \ref{fig:case} can result up to four possible local swap region. We use the intersection number between the up and south boundary and the local frame to classify the four cases, see Figure \ref{fig:cswap} for an illustration. For certain cases, it is impossible for the point $Q_{i,j}V_1$ have two intersection points with the local frame, 98 possible combinations are illustrated in Figure~\ref{fig:branch}.

 \begin{table}[!t]
 \centering
 \caption{Intersection cases of the up/down edge of a local swap region}
 \begin{tabular}{cc|c|c|c|c}
 \hline
     &         & \multicolumn{4}{c}{position of $Q_{i,j}$ in the local frame} \\\cline{3-6}
  \# & points  & $B_1$ & $B_2$ & $B_3$ & $B_4$ \\ \hline
1 & $V_1=$&  $F^b_{i-\frac{1}{2},j} \cap F^a_{i,j+\frac{1}{2}}$  & $F^b_{i+\frac{1}{2},j} \cap F^a_{i,j+\frac{1}{2}}$ &  $F^b_{i+\frac{1}{2},j} \cap F^a_{i,j-\frac{1}{2}}$  & $F^b_{i-\frac{1}{2},j} \cap F^a_{i,j-\frac{1}{2}}$  \\
                              \hline
 2 & $V_1=$  &   $F^b_{i-\frac{1}{2},j} \cap F^a_{i,j-\frac{1}{2}}$  & $F^b_{i+\frac{1}{2},j} \cap F^a_{i,j-\frac{1}{2}}$ &  $F^b_{i+\frac{1}{2},j} \cap F^a_{i,j+\frac{1}{2}}$  & $F^b_{i-\frac{1}{2},j} \cap F^a_{i,j+\frac{1}{2}}$  \\
  & $V_2=$ & $F^b_{i-\frac{1}{2},j} \cap F^a_{i+\frac{1}{1},j}$  & $F^b_{i+\frac{1}{2},j} \cap F^a_{i-\frac{1}{2},j}$ &  $F^b_{i+\frac{1}{2},j} \cap F^a_{i-\frac{1}{2},j}$  & $F^b_{i-\frac{1}{2},j} \cap F^a_{i+\frac{1}{2},j}$  \\
 \hline

 \end{tabular}
 \label{tab:V}
 \end{table}

\begin{figure}[!t]
\centering

\subfloat[U1S1]{\includegraphics[height=1.6cm,width=1.2cm]{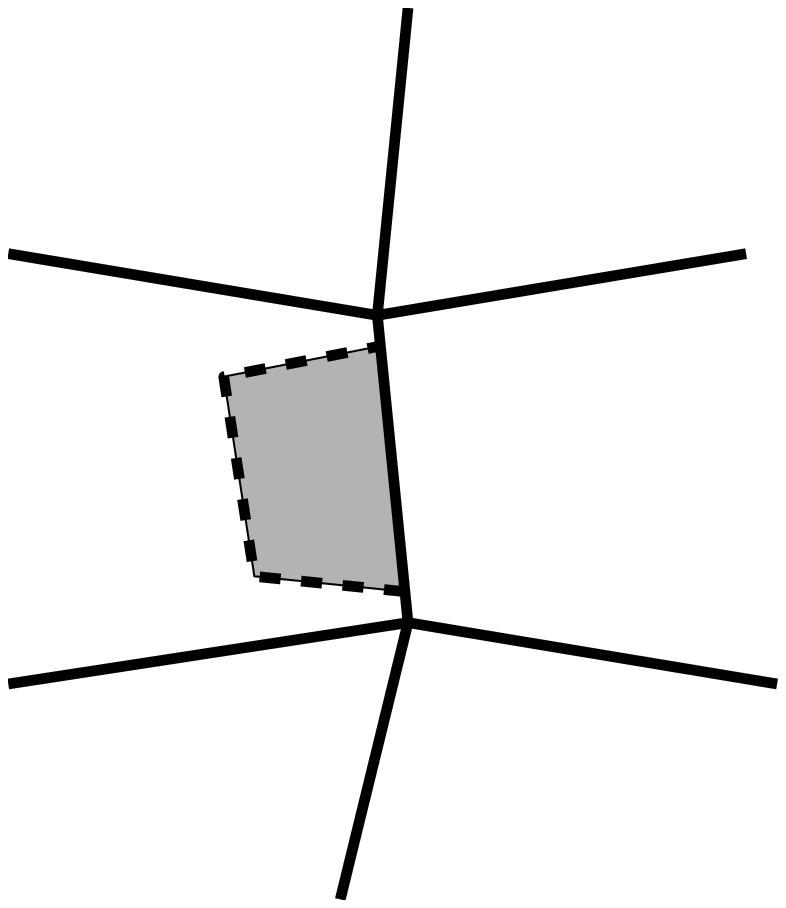}
               \includegraphics[height=1.6cm,width=1.2cm]{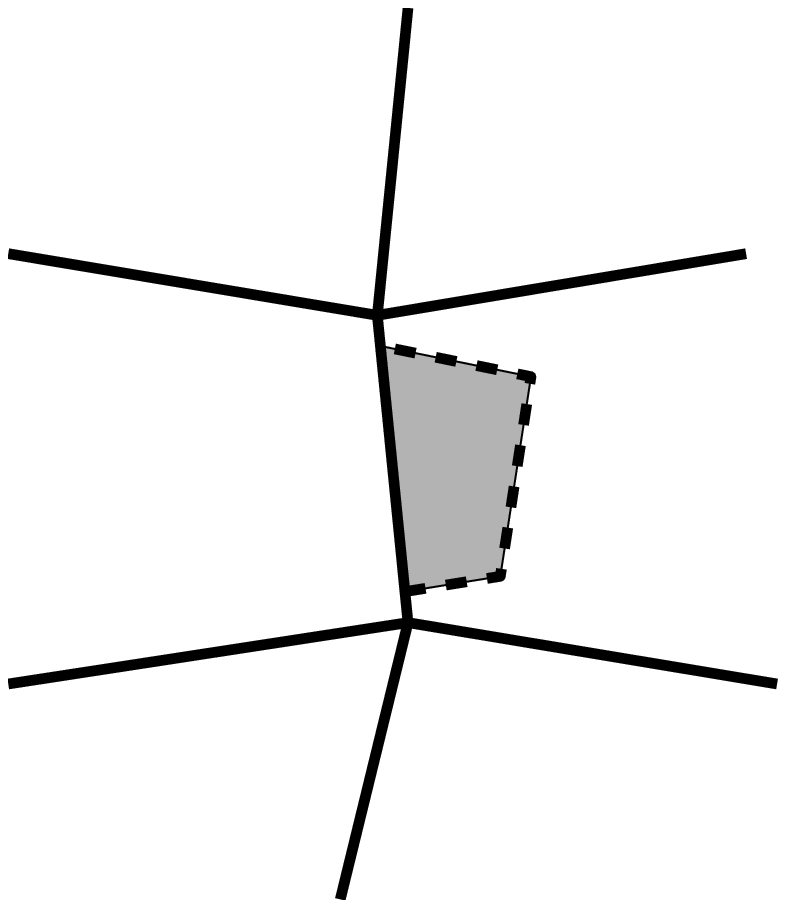}}\,
\subfloat[U2S2]{\includegraphics[height=1.6cm,width=1.2cm]{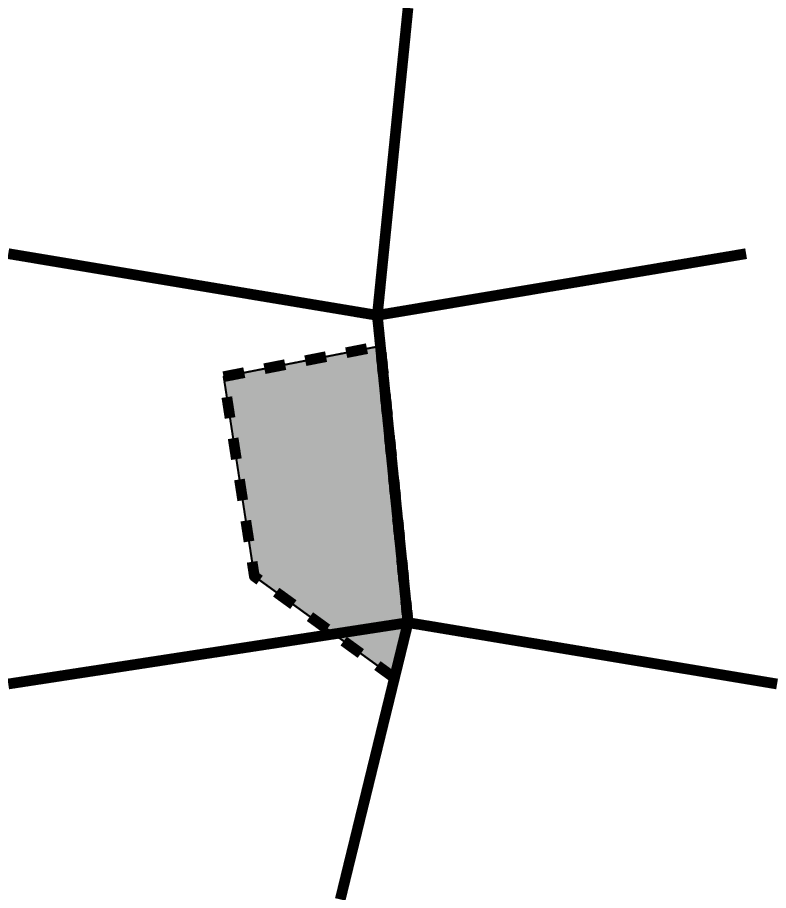}
               \includegraphics[height=1.6cm,width=1.2cm]{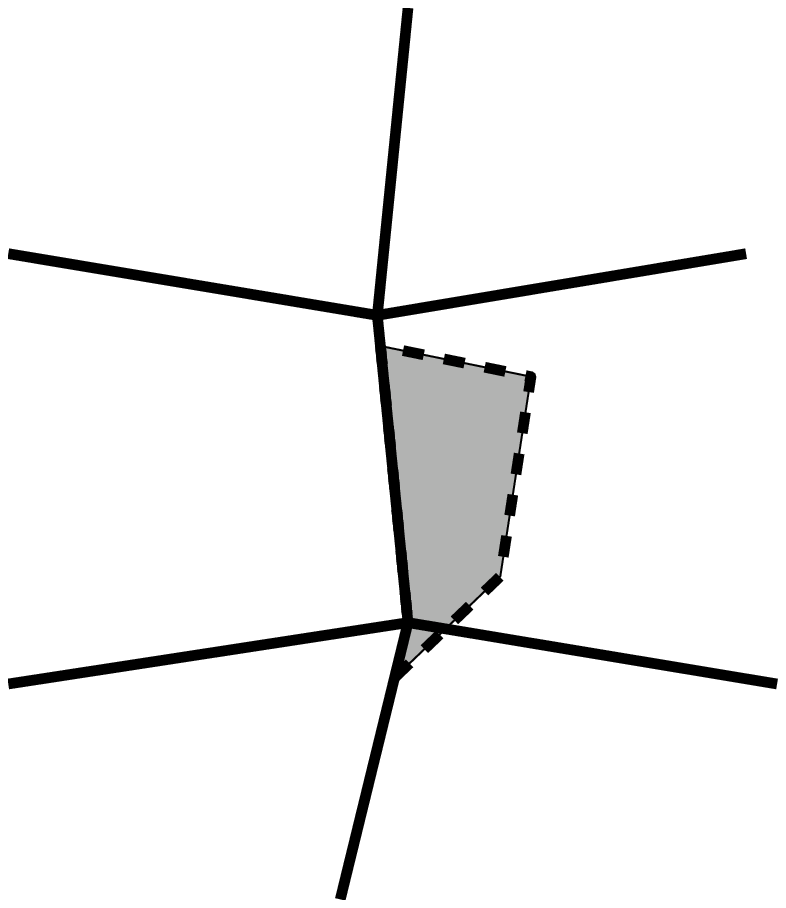}} \,
 \subfloat[U2S1]{\includegraphics[height=1.6cm,width=1.2cm]{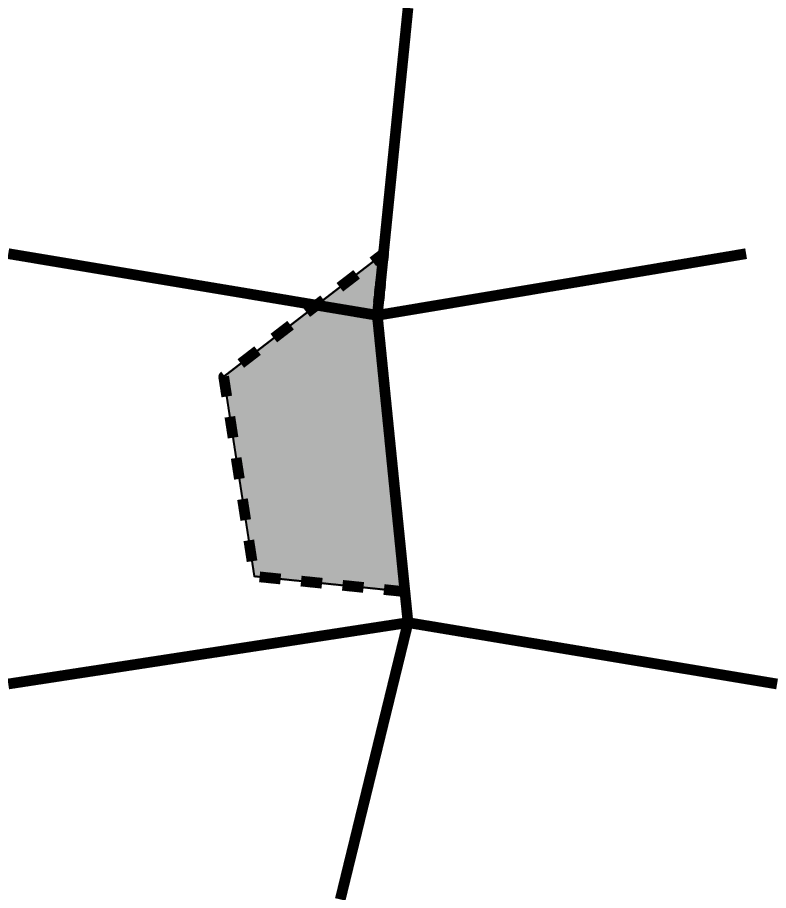}
               \includegraphics[height=1.6cm,width=1.2cm]{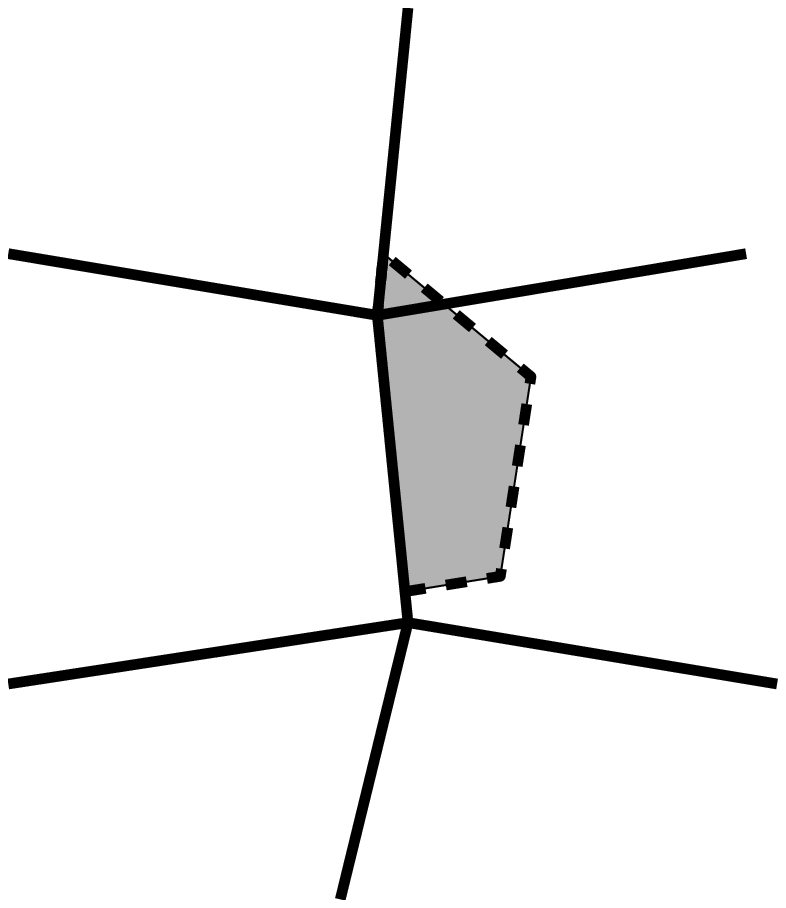}} \,
  \subfloat[U2S2]{\includegraphics[height=1.6cm,width=1.2cm]{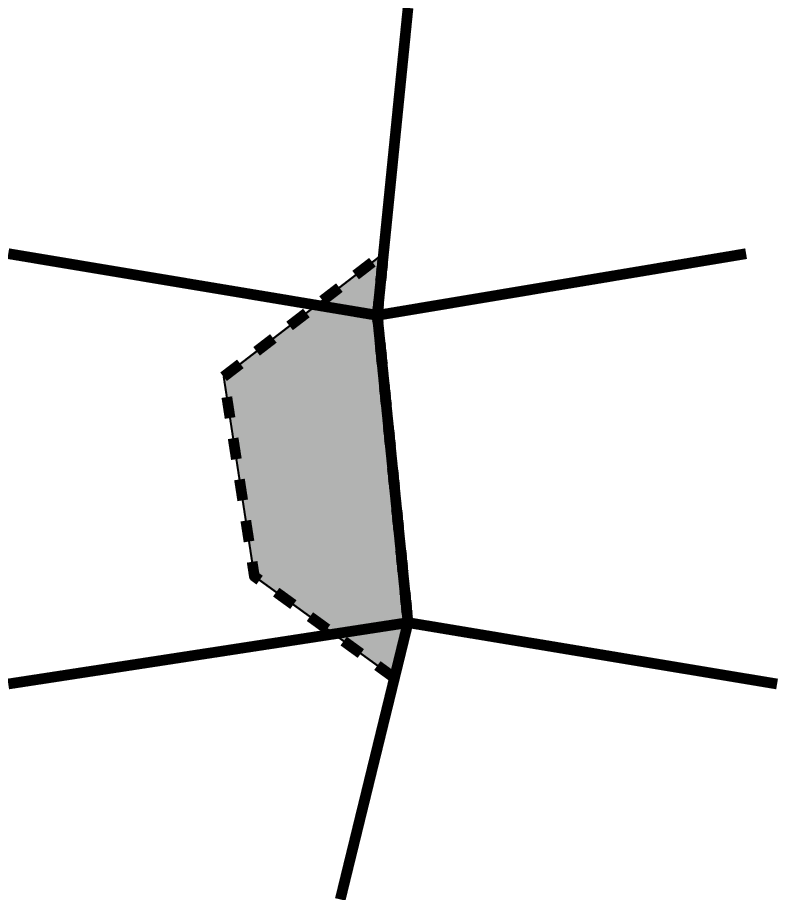}
               \includegraphics[height=1.6cm,width=1.2cm]{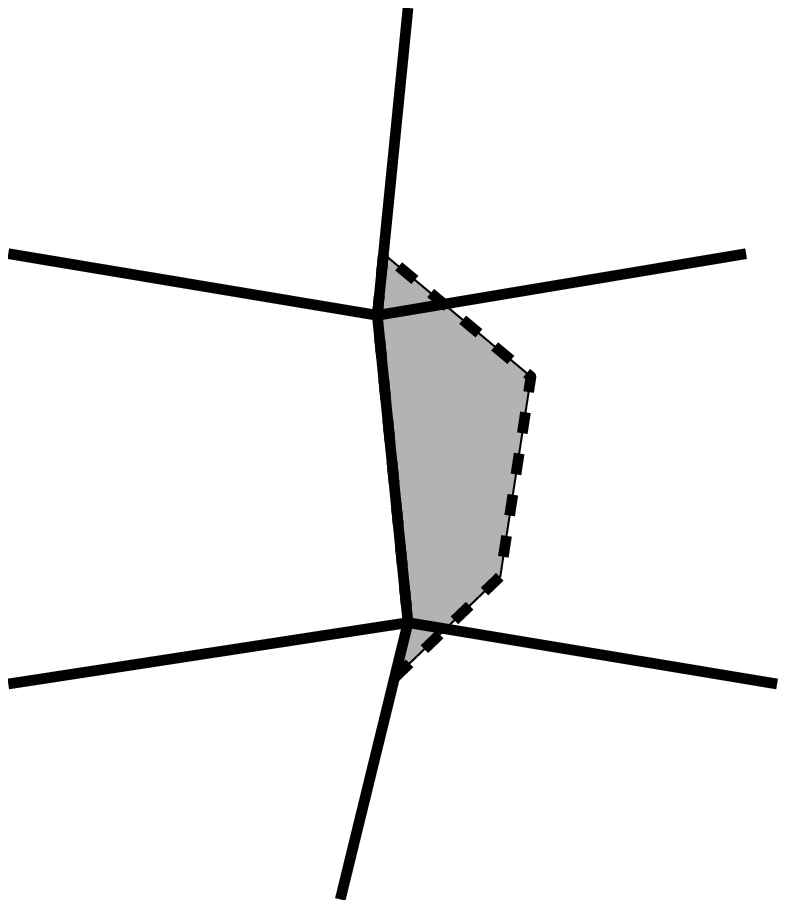}}
\caption{The shapes of local swap region based on the case of H0V0 in Fig.~\ref{fig:case}.}
\label{fig:cswap}
\end{figure}
 \begin{figure}
\centering
\footnotesize
\resizebox{\textwidth}{0.8\textheight}{
\input{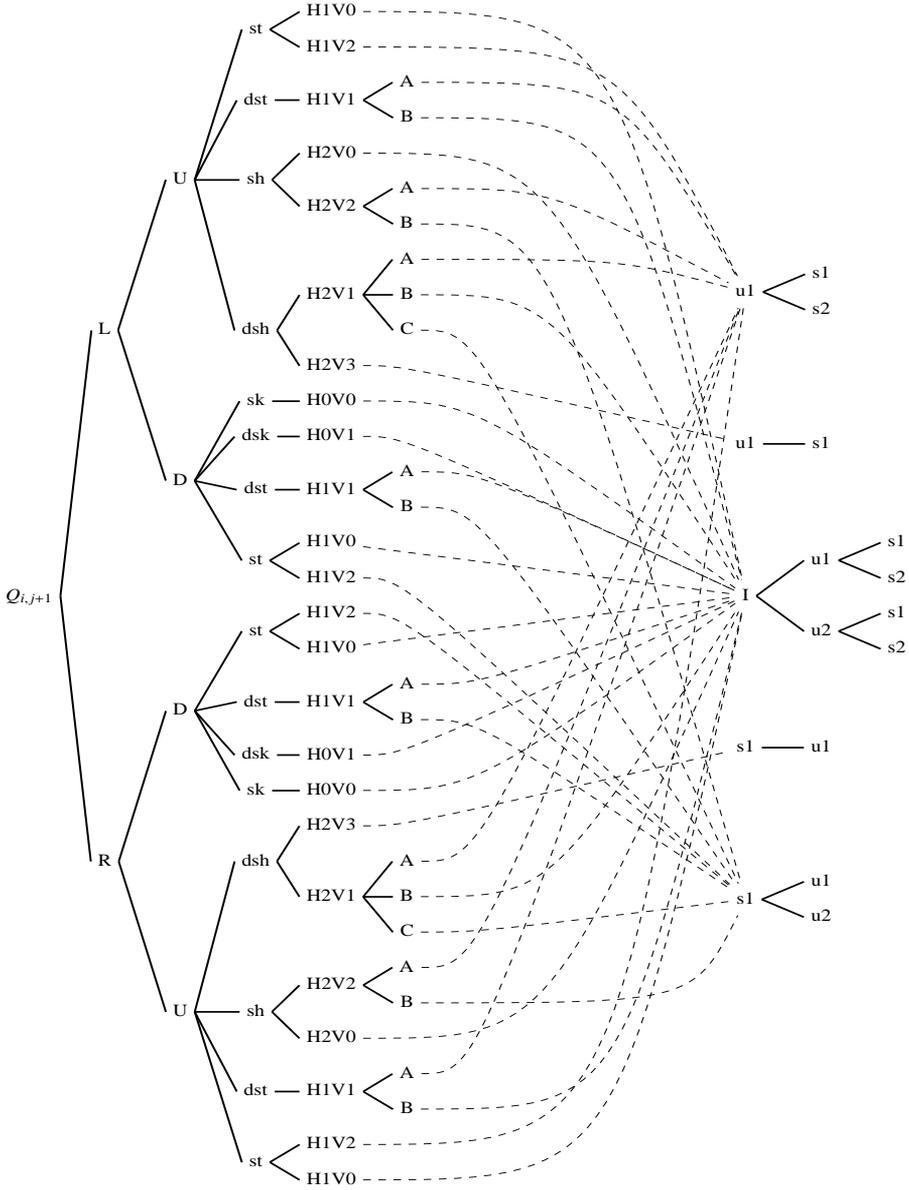}
}
\caption{Classification tree for all possible cases of a local swap region. $Q_{i+1,j+1}$ is on of the vertex of $F^b_{i,j+1/2}$, $L$, $R$ $U$ and $D$ stand for the relative position of $Q_{i,j}$ in the local frame $\LF^a_{i,j+\frac{1}{2}}$. The edge $F^b_{i,j+\frac{1}{2}}$ can be shifted(st), shrunk(sk), stretched(sh), diagonally shifted(dst), diagonally shrunk(dsk), and diagonally stretched(dsh).  }
\label{fig:branch}
\end{figure}

\begin{fact}
 Let $\T^a$ and $\T^b$ be two admissible quadrilateral mesh of the same structure. Under the Assumption A1, A2 and A3, an local swap region has up 98 cases with respect to a local frame.
\end{fact}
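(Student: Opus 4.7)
The plan is to bootstrap from the previous fact (the 17 symmetric/34 non-symmetric edge-versus-frame intersection cases depicted in Figure~\ref{fig:case}) and refine each of these by enumerating the admissible shapes of the two ``connecting'' boundary pieces that close off the local swap region, namely the segments joining $Q_{i,j+1}$ and $Q_{i,j}$ to their respective singular intersection points on $x^a_i(t)$. Concretely, a local swap region is bounded by a piece of $x^b_i(t)$ (already classified into one of the 34 cases), a piece of $x^a_i(t)$, and two partial curves of $y^b_{j+1}(t)$ (the ``up'' boundary, call it $U$) and $y^b_j(t)$ (the ``down/south'' boundary, call it $S$). Because of Assumption A2, each of $U$ and $S$ hits $x^a_i(t)$ exactly once, at the points $V_1'$ and $V_1$ of Table~\ref{tab:V}; the remaining combinatorial freedom is whether, before reaching that singular point, the segment crosses an additional edge of the local frame $\LF^a_{i,j+\frac{1}{2}}$.

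First I would partition the 34 cases according to the quadrant label of $Q_{i,j+1}$ (regions $A_1,\ldots,A_4$) paired with that of $Q_{i,j}$ (regions $B_1,\ldots,B_4$), following the six geometric families already named (shifted, shrunk, stretched and their diagonal counterparts). For each case I would then ask two independent multiplicity questions: (i) does $U$ pass through exactly one frame edge before hitting $V_1'$ (call this $u_1$) or through two (call this $u_2$)? and (ii) the same question for $S$ giving $s_1$ or $s_2$. Table~\ref{tab:V} already tells us which frame edge is crossed in each case, so determining whether $u_2$ or $s_2$ is geometrically possible reduces to checking, for the representative drawing of that case, whether the chord from $Q_{i,j+1}$ (resp.\ $Q_{i,j}$) to the appropriate $V_1'$ (resp.\ $V_1$) can slip past a second frame edge without violating A1--A3.

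The bookkeeping is then a straightforward tree-traversal exactly as drawn in Figure~\ref{fig:branch}. Most generic cases (such as H1V0 and H0V0) admit the full product $\{u_1,u_2\}\times\{s_1,s_2\}$, contributing $4$ subcases each. Cases where either $Q_{i,j+1}$ or $Q_{i,j}$ lies on the ``near'' side of the local frame kill one of the two $u$ or $s$ branches, contributing $2$ subcases. The two extreme diagonally-stretched cases H2V3 force both boundaries to be uniquely determined, each contributing only $1$. Summing the leaves of the tree in Figure~\ref{fig:branch} gives $16\cdot 4+8\cdot 2+8\cdot 2+2\cdot 1 = 98$.

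The delicate step, and the one I expect to be the main obstacle, is the geometric verification that no further combination is possible and that none of the nominally possible combinations is actually excluded by A1--A3. In particular, for the dsh and dst families one must carefully argue that the ``second intersection'' of the connecting chord with the frame cannot be produced in the direction opposite to the one indicated in Table~\ref{tab:V}, which is exactly what rules out, say, a $u_2$ branch on top of H2V3. Once that geometric case check is done for each of the 34 parent cases, the count $98$ follows by direct addition over the tree.
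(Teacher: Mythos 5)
Your proposal follows essentially the same route as the paper: it refines each of the 34 edge-versus-local-frame cases by the intersection multiplicity (one or two) of the up and south boundary chords with the frame, using Table~\ref{tab:V} to locate $V_1$ and $V_2$, and sums the leaves of the classification tree of Fig.~\ref{fig:branch} to get $16\cdot 4+8\cdot 2+8\cdot 2+2\cdot 1 = 98$, which is exactly the paper's count. The ``delicate'' geometric verification you flag is likewise left to the case-by-case figures in the paper, so your argument is neither more nor less complete than the original.
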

Form the Fig.1 in \cite{D84} and \cite{Ramshaw1986}, we shall see that Ramshaw's approach requires at least 98 programming cases Under the Assumption A1, A2 and A3, while the swept region approach requires only 34 programming cases. This is a significant improvement.  When the Assumption A3 fails, or the so called degeneracy of the overlapped region arises, more benefit can be obtained.

\subsection{Degeneracy of the intersection and  signed area of a polygon}
\begin{figure}[!t]
\centering
\subfloat[ ]{\includegraphics[height=1.6cm,width=1.2cm]{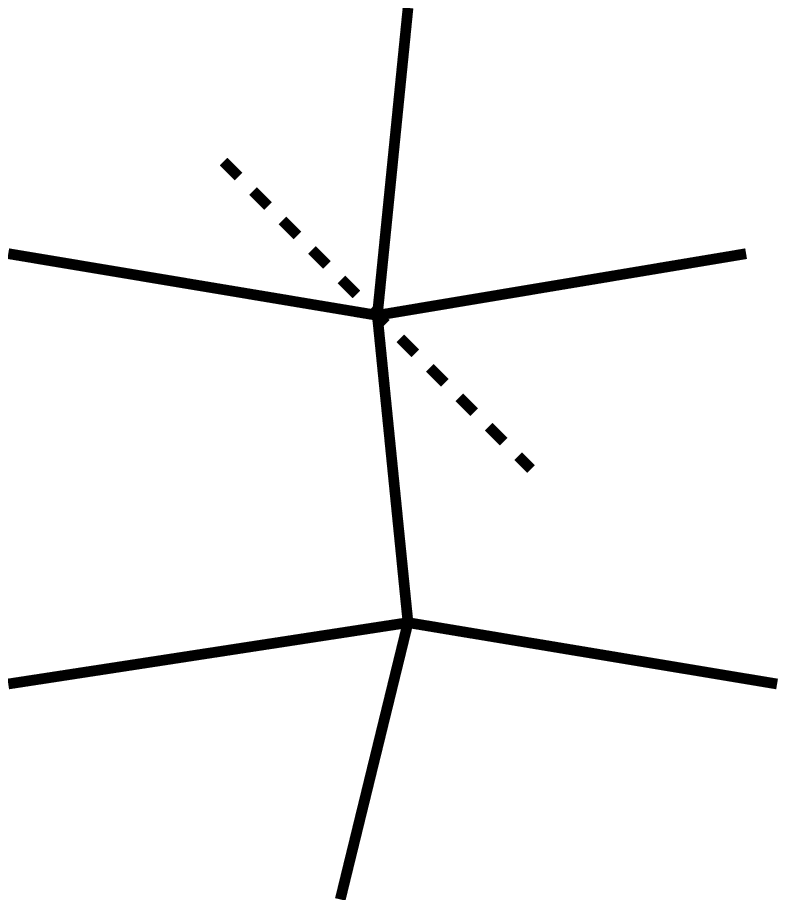}}
\subfloat[ ]{\includegraphics[height=1.6cm,width=1.2cm]{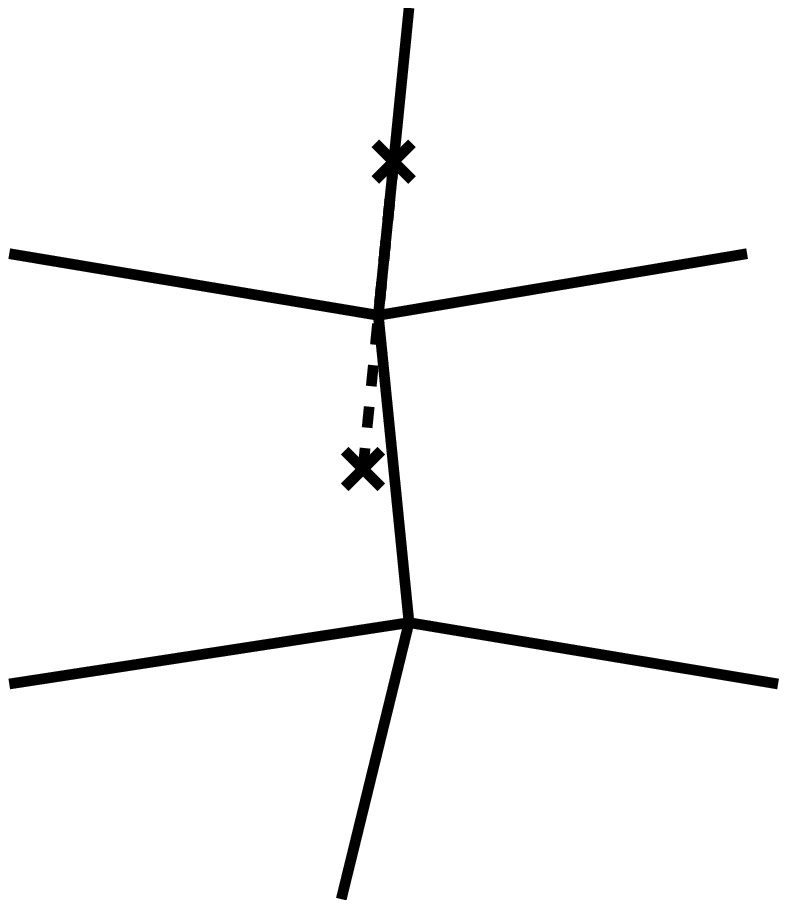}}
\subfloat[ ]{\includegraphics[height=1.6cm,width=1.2cm]{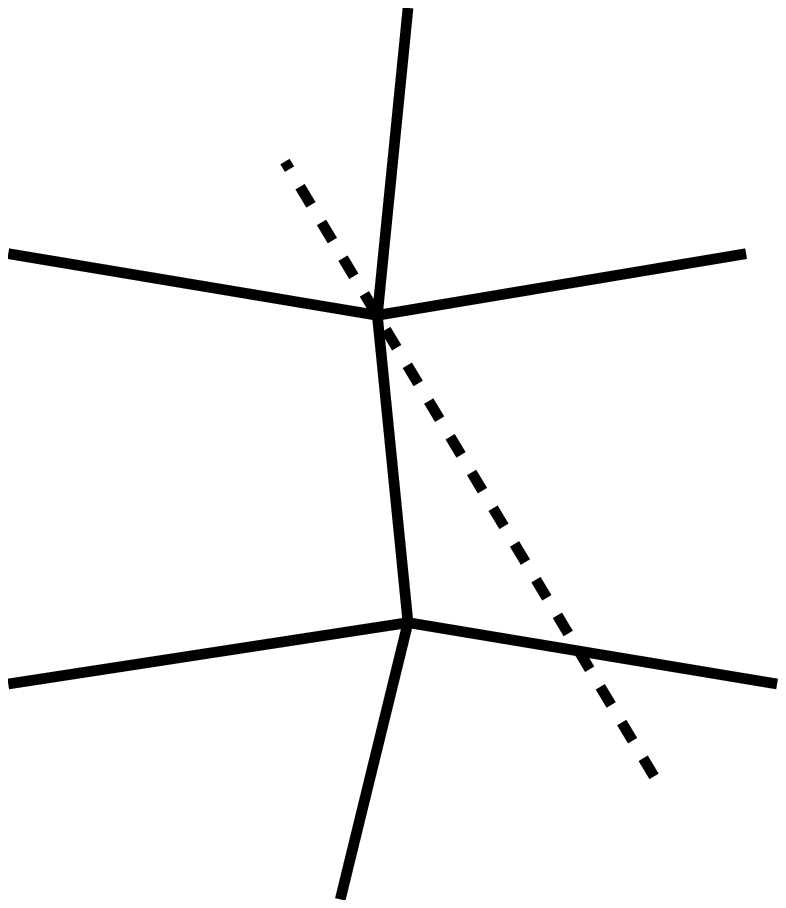}}
\subfloat[ ]{\includegraphics[height=1.6cm,width=1.2cm]{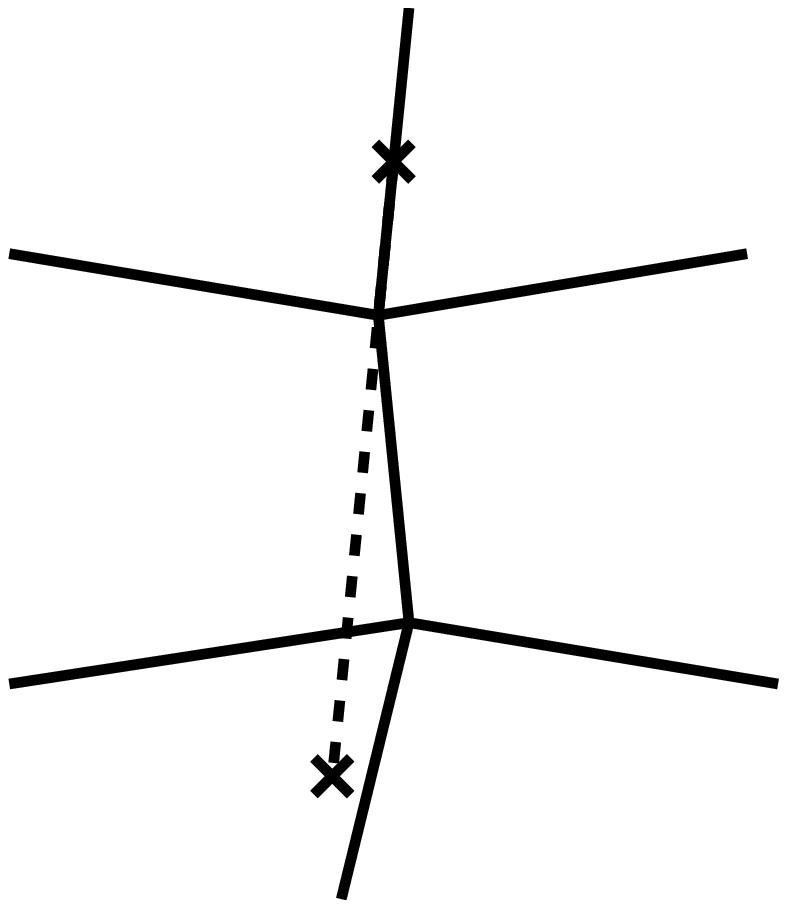}}
\subfloat[ ]{\includegraphics[height=1.6cm,width=1.2cm]{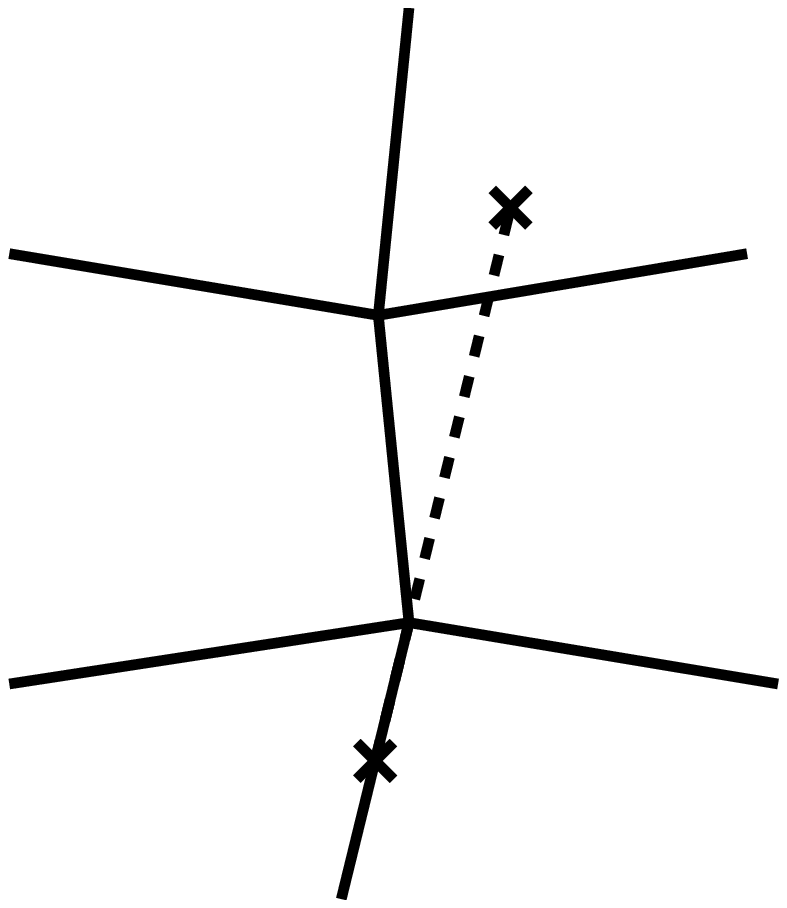}}
\subfloat[ ]{\includegraphics[height=1.6cm,width=1.2cm]{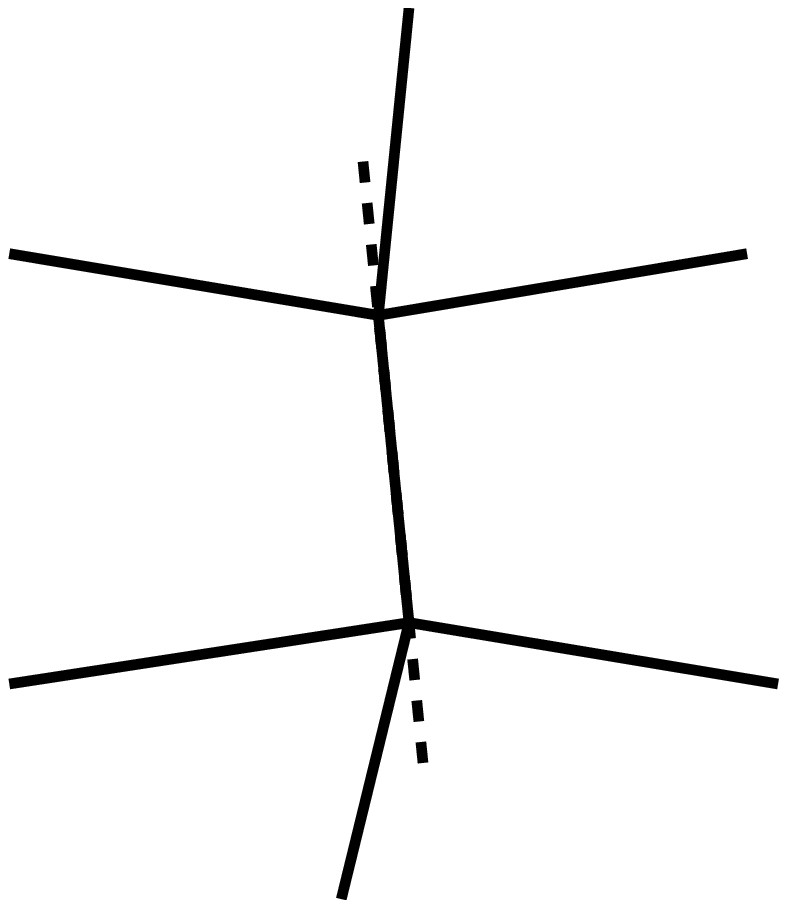}}
\subfloat[ ]{\includegraphics[height=1.6cm,width=1.2cm]{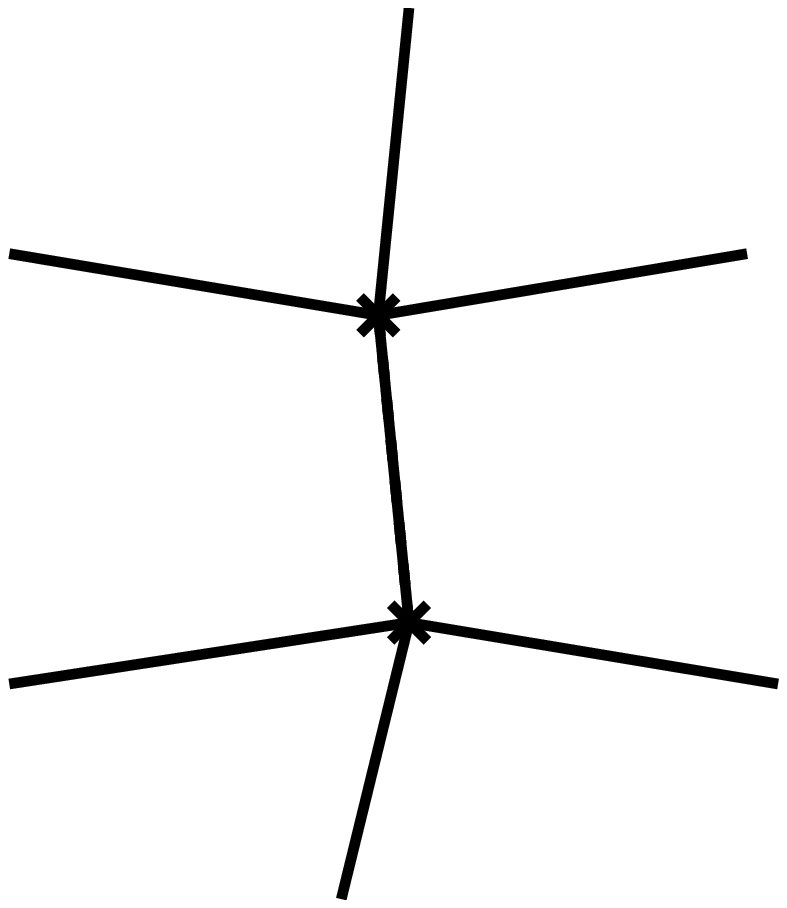}}
\subfloat[ ]{\includegraphics[height=1.6cm,width=1.2cm]{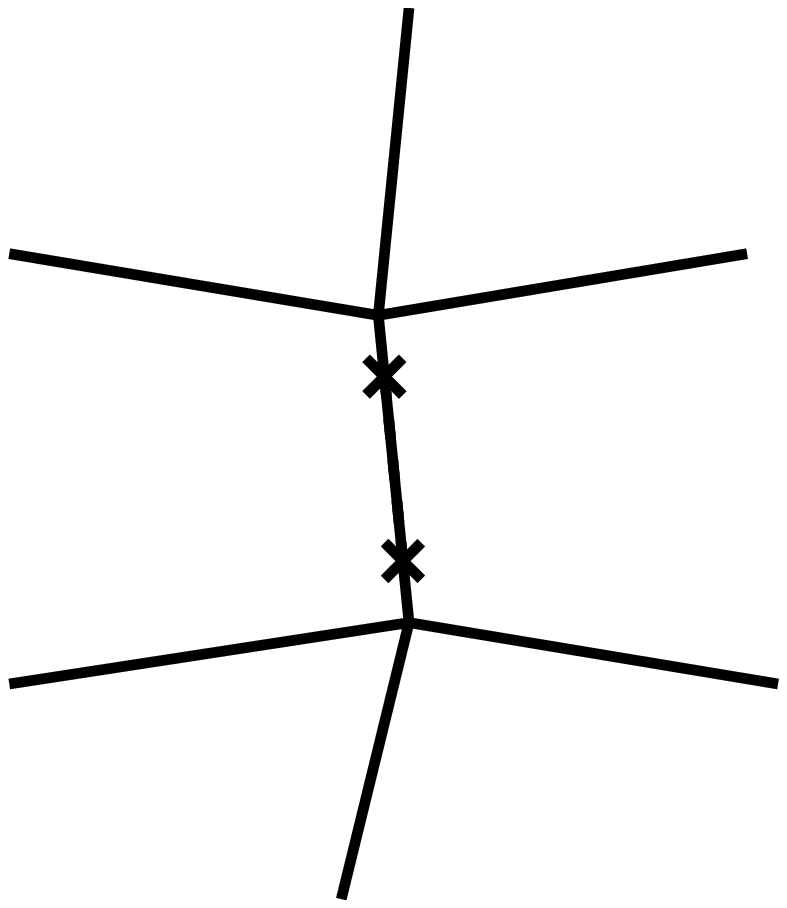}}
\caption{Some degeneracy of the intersections. The swept/fluxing area for (a)-(e) degenerates to be a triangle plus a segment, for (f),(g) and (h), the intersection degenerate to a line segment. While for the swap region, an overlap line segment of a new edge and the old edge can be viewed a degeneracy of a quadrilaterals. A overlap of the horizontal and vertical intersection point can be viewed as the degeneracy of a triangular.}
\end{figure}

The intersection of $T^b_{i,j}$ and $T^a_{k,s}$ for $(k,s) \in \mathcal{J}^a_{i,j}$ can be an polygon, an edge or even only a vertex. The degeneracy of the intersection was believed as one of the difficulty of the challenge of the CIB/DC method\cite[p.273]{ms03}. In fact, some cases can be handled by the Green formula to calculate the planar polygon with line integrals. Suppose the vertices of a polygon are arranged in counterclockwise, say, $P_1P_2,\ldots, P_s$, then it area can be calculated as
\begin{linenomath}
\begin{align}
\iint dxdy &= {\ointctrclockwise}_{\overrightarrow {P_1P_2}+\overrightarrow {P_2P_3}+ \cdots +\overrightarrow {P_sP_1}} xdy   \nonumber \\
& =\sum_{k=2}^{s+1} \frac{(y_k-y_{k-1})(x_k+x_{k-1})}{2}=
\frac{1}{2} \sum_{k=1}^{s-1}(x_ky_{k+1}-y_kx_{k+1}),  \label{eq:darea}
\end{align}
\end{linenomath}
where $P_{s+1}=P_1$. This is due to the fact
\begin{linenomath}
$$
\int_{\overrightarrow{P_1P_2}} x dy = \int_{x_1}^{x_2} x  \frac{y_2 -y_1}{x_2-x_1} dx = \frac{(y_2-y_1)(x_1+x_2)}{2}.
$$
\end{linenomath}
The formula \eqref{eq:darea} can handle any polygon including the degenerate cases: a polygon with $s+1$ vertices degenerate to one with $s$ vertices, a triangle degenerates to a vertex or a quadrilateral polygonal degenerates to a line segment. Such degenerated cases arise when one or two of the vertices of the the face $F^b_{i,j+\frac{1}{2}}$ lie on the local frame $\LF^a_{i,j+\frac{1}{2}}$, or the horizontal intersection points overlaps with the vertical intersection points. The later cases bring no difficulty. What really brings difficulties are the cases when $Q_{ij}$ locates in the vertical lines in the local frame or the faced overlaps with partial of the vertical lines in the local frame.. To identify such degeneracies, one only need more flags to identify whether such cases happens when calculating the intersecting points between face $F^b_{i,j+\frac{1}{2}}$ and $F^a_{i,j\pm\frac{1}{2}}$ and $F^a_{i,j+\frac{3}{2}}$.

\subsection{Assign a new vertex to an old cell}
 As shown in Fig.~\ref{fig:branch}, the relative position of a new vertex in an old local frame is the basis to classify all the intersection possibilities. This can also be obtained by the Green formula for the singed area of a polygon. We shall denote the signed area of $P_{i,j}P_{i,j+1}Q_{i,j}$ as $A_1$, $P_{i,j}P_{i,j+1}Q_{i,j}$ as $A_2$, $P_{i,j-1}P_{i,j}Q_{i,j}$ as $A_3$ and $P_{i,j-1}P_{i,j}Q_{i,j}$ as $A_4$. Then the vertex $Q_{i,j}$ can be assigned according to Algorithm~\ref{alg:assign}. This determine the first two level of branches of the classification tree in Fig. \ref{fig:branch}.

\begin{algorithm}
\caption{Assign current new vertex to an old cell}
\label{alg:assign}
\begin{algorithmic}
\State{ Compute $A_1$ }
\If {$A_1\ge 0$}
    \State{ Compute $A_2$}
    \If { $A_2\leq 0$}
        \State{ \Return flag='RU'; }\Comment{$Q_{i,j} \in C_{i+\frac{1}{2},j+\frac{1}{2}}$}
    \Else
        \State{Compute $A_3$}
        \If {$A_3\geq 0$}
            \State{ \Return flag='LU';} \Comment{$Q_{i,j} \in C_{i-\frac{1}{2},j+\frac{1}{2}}$}
        \Else
             \State{ \Return flag='RD';} \Comment{$Q_{i,j} \in C_{i-\frac{1}{2},j-\frac{1}{2}}$}
        \EndIf
    \EndIf
\Else
    \State{ Compute $A_4$}
     \If { $A_4 \leq 0$}
        \State{ \Return flag='RD'; }\Comment{$Q_{i,j} \in C_{i+\frac{1}{2},j-\frac{1}{2}}$}
    \Else
        \State{Compute $A_3$}
        \If {$A_3<0$}
            \State{ \Return flag='LD';} \Comment{$Q_{i,j} \in C_{i-\frac{1}{2},j-\frac{1}{2}}$}
        \Else
             \State{ \Return flag='LU';} \Comment{$Q_{i,j} \in C_{i-\frac{1}{2},j+\frac{1}{2}}$}
        \EndIf
    \EndIf
\EndIf
\end{algorithmic}
\end{algorithm}

\begin{figure}[!t]
\centering
\subfloat[]{\includegraphics[width=0.32\textwidth, height=0.32\textwidth]{swapping_03}}
\subfloat[\label{fig:VS}]{\includegraphics[width=0.32\textwidth, height=0.32\textwidth]{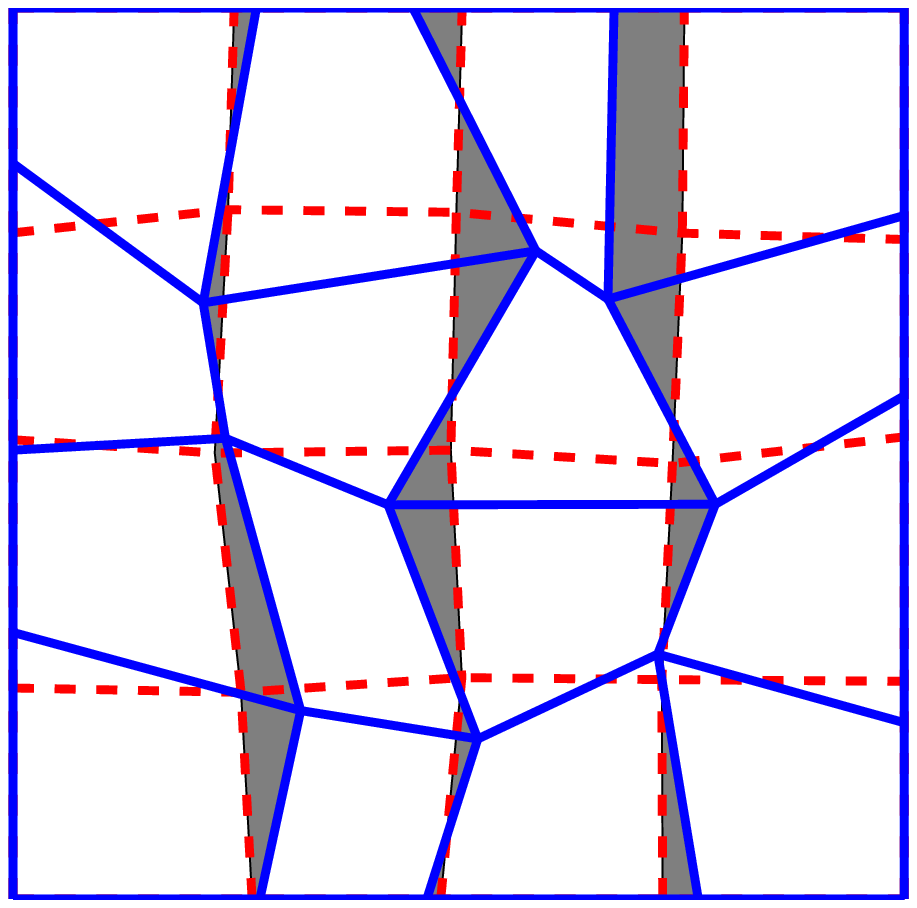}}
\subfloat[\label{fig:HS}]{\includegraphics[width=0.32\textwidth, height=0.32\textwidth]{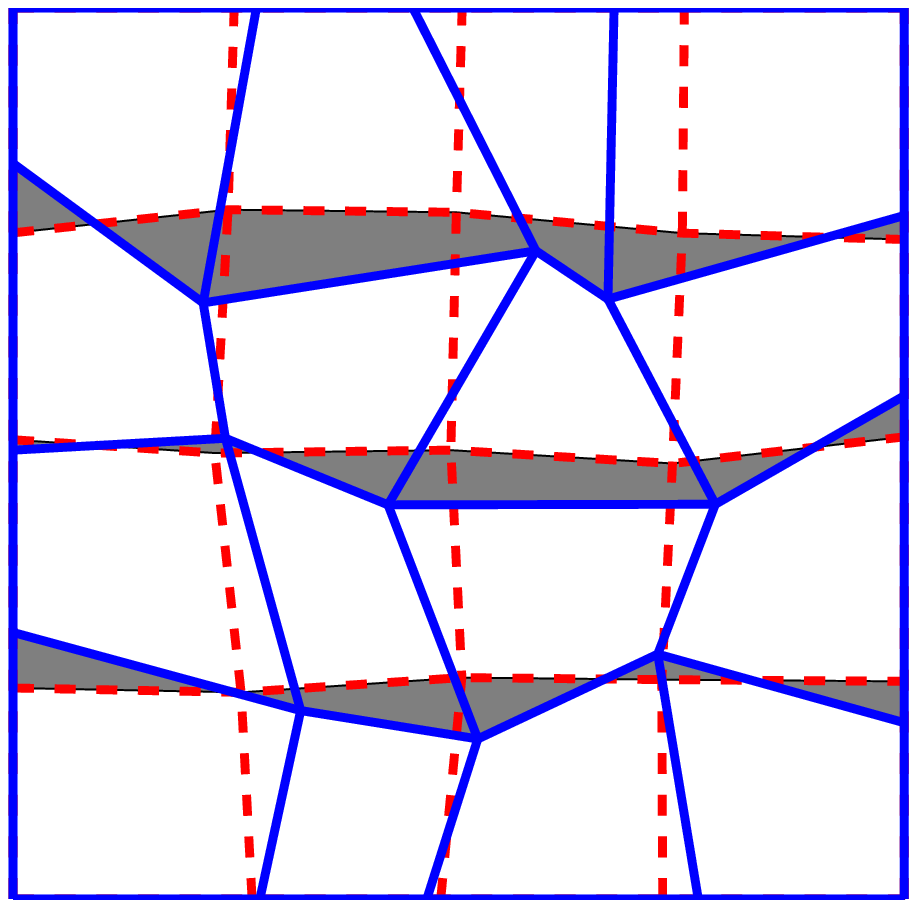}}
\caption{Illustration for the vertical and horizontal swap and swept regions}
\end{figure}
\subsection{Alternative direction sweeping}
To calculate all intersections in the swap area and swept/fluxing area, we can apply the alternative direction idea: view the union of the swap area of two admissible quadrilateral mesh of a domain as the union of (logically) vertical strips (shadowed region in Fig.~\ref{fig:VS}) and horizonal strips (shadowed area in Fig.~\ref{fig:HS}). One can alternatively sweep the vertical and horizontal swap strips. Notice that the horizonal strips can be viewed as the vertical strip by exchanging the x-coordinates and the y-coordinates. Therefore, one can only program the vertical sweep case. Each sweep calculate the intersections in the vertical/horizontal strips chunk by chunk. For the CIB/DC method, each chunk is a local swap region, while the FB/DC method, each chunk is a fluxing/swept area.

For the CIB/DC method, one can avoid to repeat calculating the corner contribution by thinning the second sweeping.   The first vertical sweep calculate all the intersection areas in the swap region.  The second sweep only calculates the the swap region due to the intersection of $T^b_{i,j} \cap T^a_{i,j\pm1}$. In contrast, in the FB/DC methods, the two sweep is totaly symmetric, repeat calculating the corner contribution is necessary.
\section{Applications}
Consider the following two kinds of grids.

\subsection{Tensor product grids}
The mesh on the unit square $[0,1] \times [0,1]$ is generated by the following function
\begin{linenomath}
\begin{equation}
\begin{cases}
x(\xi, \eta, t)=(1-\alpha(t)) \xi +\alpha(t) \xi^3, \\
y(\xi, \eta, t)=(1-\alpha(t)) \eta^2 \\
\alpha(t)= \sin(4\pi t)/2, \quad \xi,\eta, t \in[0,1].
\end{cases}
\label{eq:tensor}
\end{equation}
\end{linenomath}
This produce a  sequnce of tensor product grids $ {x_{i,j}^n}$ given by
\begin{linenomath}
\begin{equation}
x_{i,j}^n=x(\xi_i,\eta_i, t^n), y_{i,j}^n=y(\xi_i, \eta_j, t^n).
\end{equation}
\end{linenomath}
where $\xi_i $ and $\eta_j$ are $nx$ and $ny$ equally spaced points in $[0,1]$. For the old grid, $t_1=1/(320+nx)$, $t_2=2t_1$. We choose $nx=ny=11,21,31,41,\ldots,101$.
\begin{figure}
\centering
\subfloat[ Tensor grids ]{\includegraphics[width=0.45\textwidth]{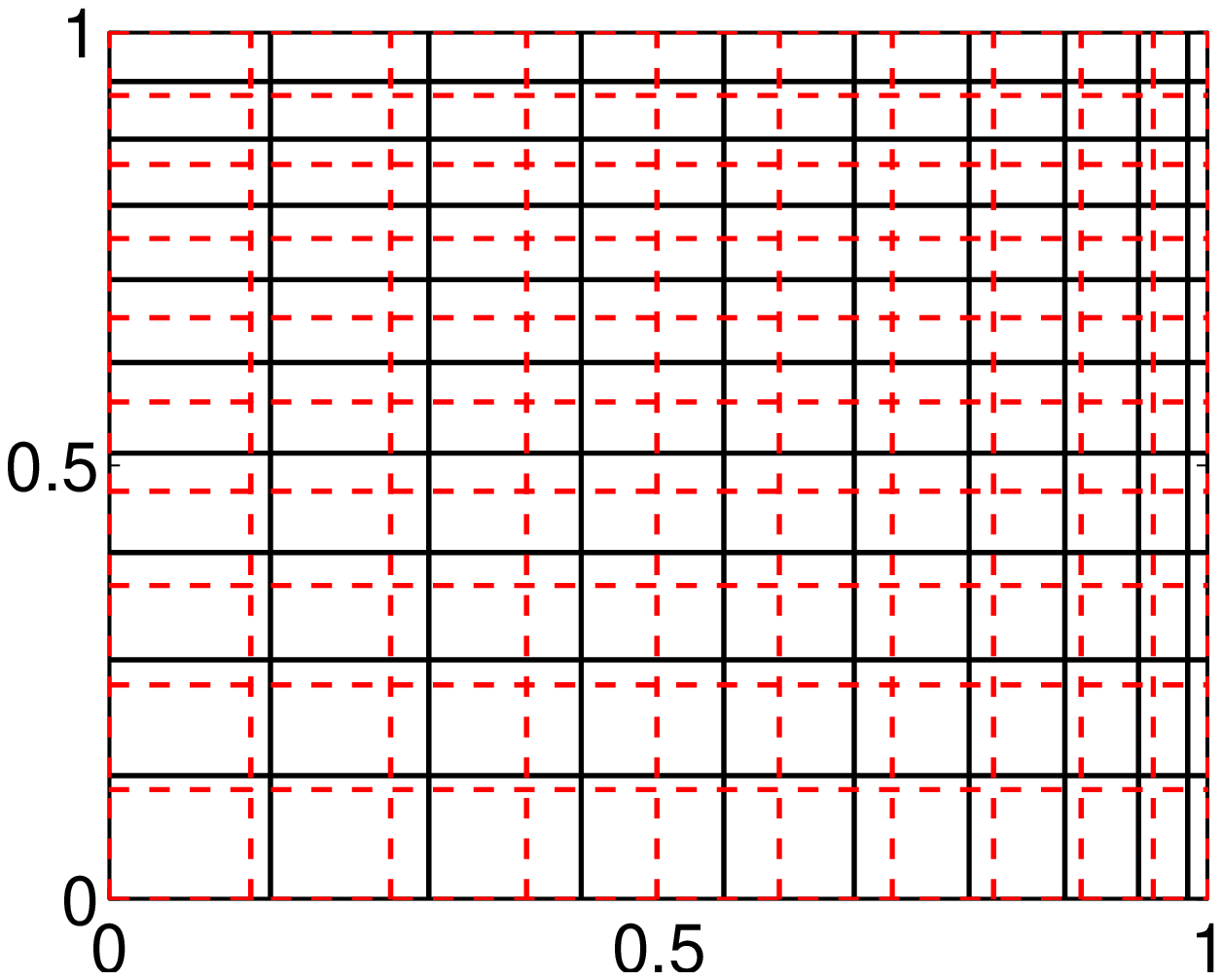}}
\subfloat[ Random grids ]{\includegraphics[width=0.45\textwidth]{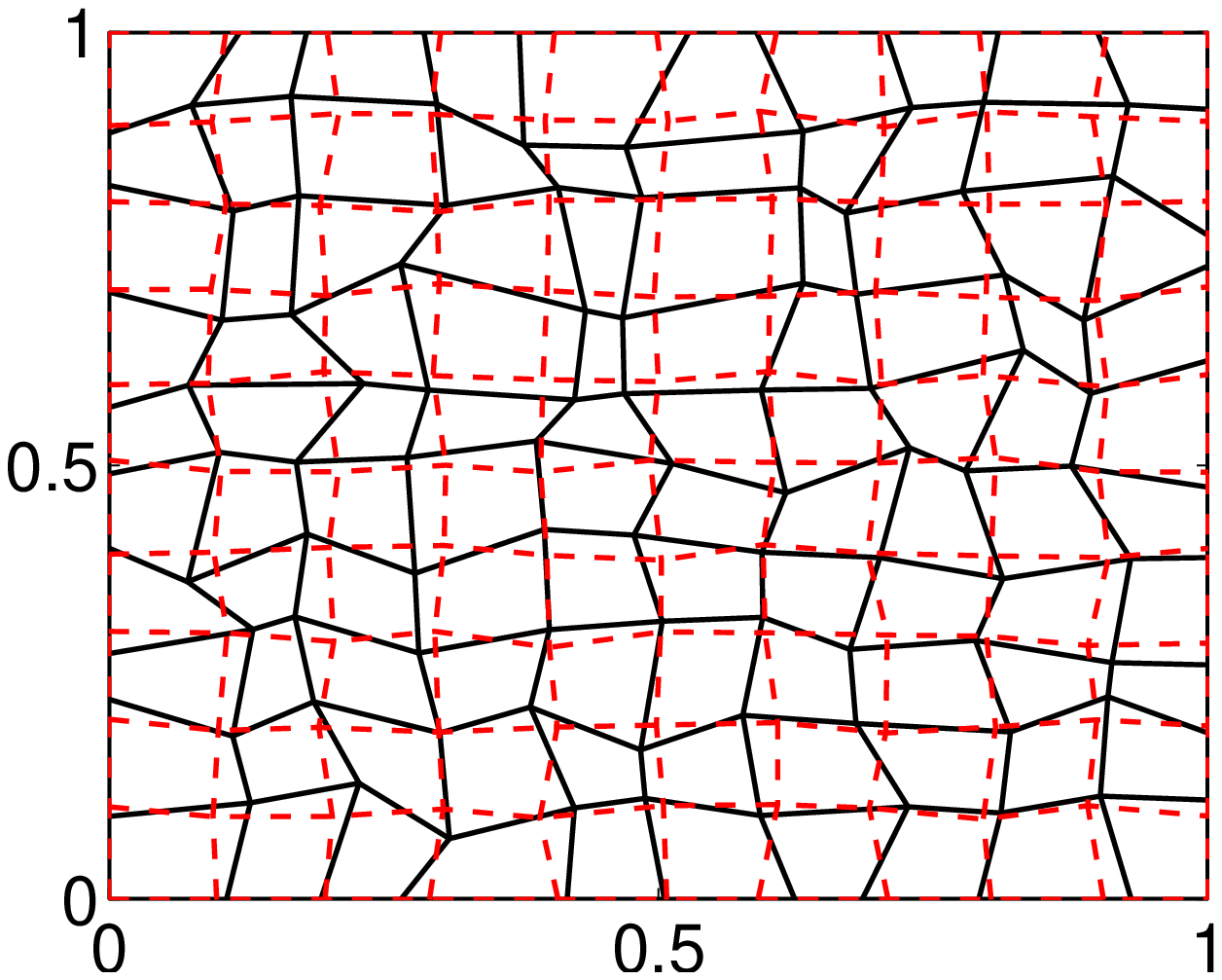}}
\caption{Illustration of the tensor grids and the random grids. }
\end{figure}
\subsection{Random grids}
A random grid is a perturbation of a uniform grid,
\begin{linenomath}
\begin{equation}
\begin{cases}
x_{ij}^n= \xi_i + \gamma r_i^n h, \\
y_{ij}^n= \eta_j+ \gamma r_j^n h.
\end{cases}
\end{equation}
\end{linenomath}
where $\xi_i$ and $\eta_j$ are constructed as that in the above tensor grids. $h=1/(nx-1)$. We use $\gamma=0.4$ as the old grid and $\gamma=0.1$ as the new grid, $nx=ny$.

\subsection{Testing functions}
\begin{figure}
\centering
\subfloat[Franke function \label{fig:franke} ]{\includegraphics[width=0.32\textwidth]{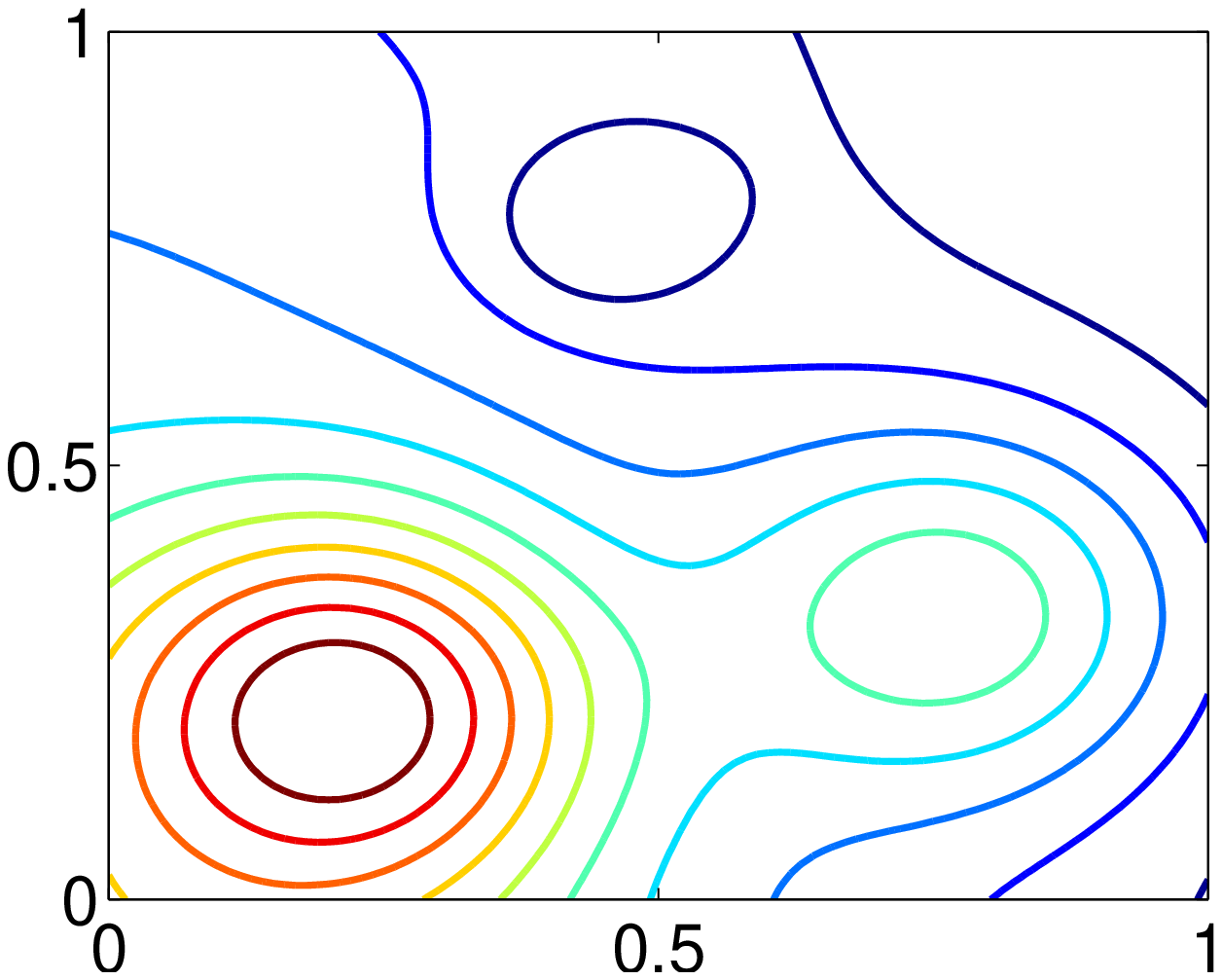}}
\subfloat[tanh function \label{fig:tanh} ]{\includegraphics[width=0.32\textwidth]{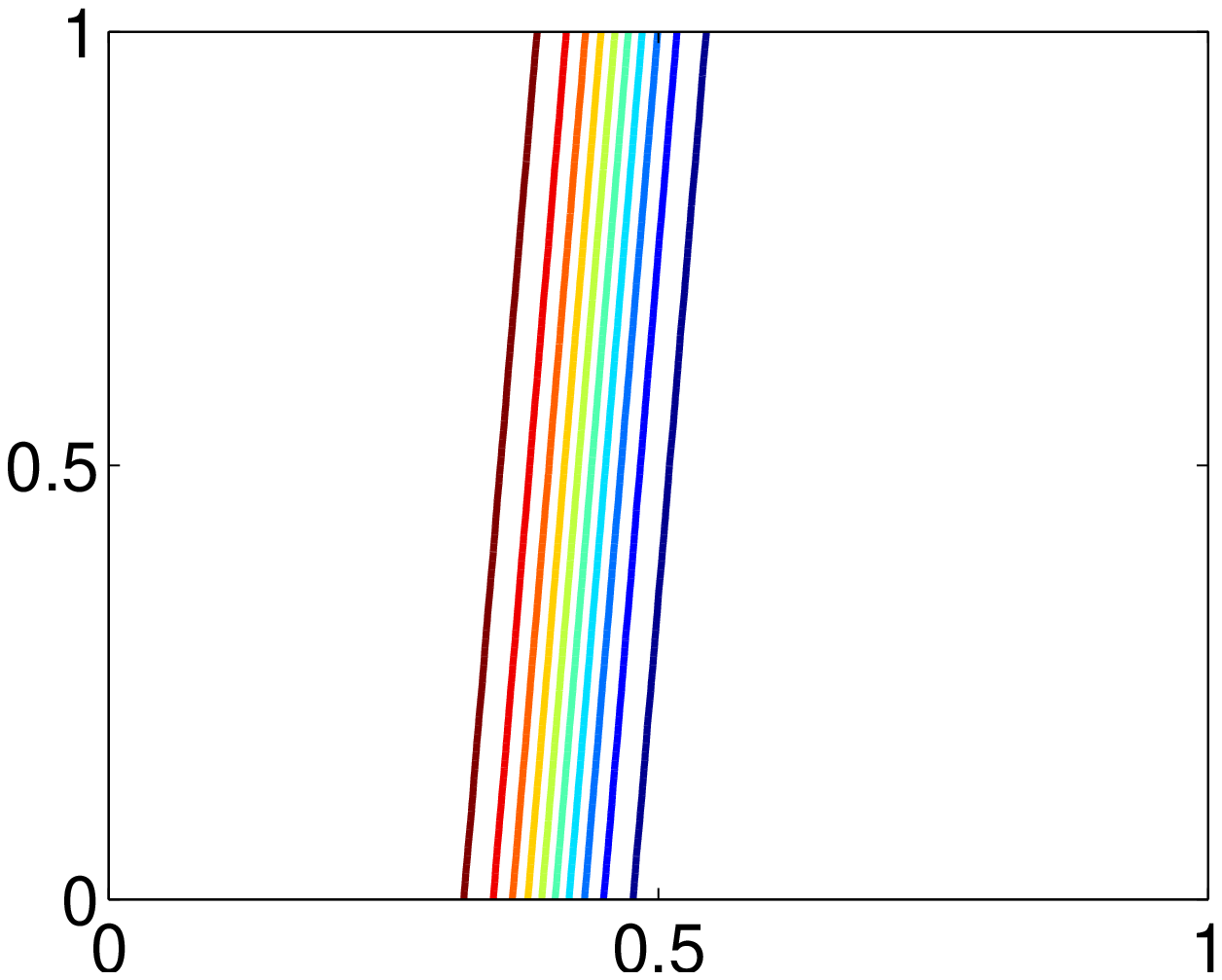}}
\subfloat[peak function \label{fig:peak} ]{\includegraphics[width=0.32\textwidth]{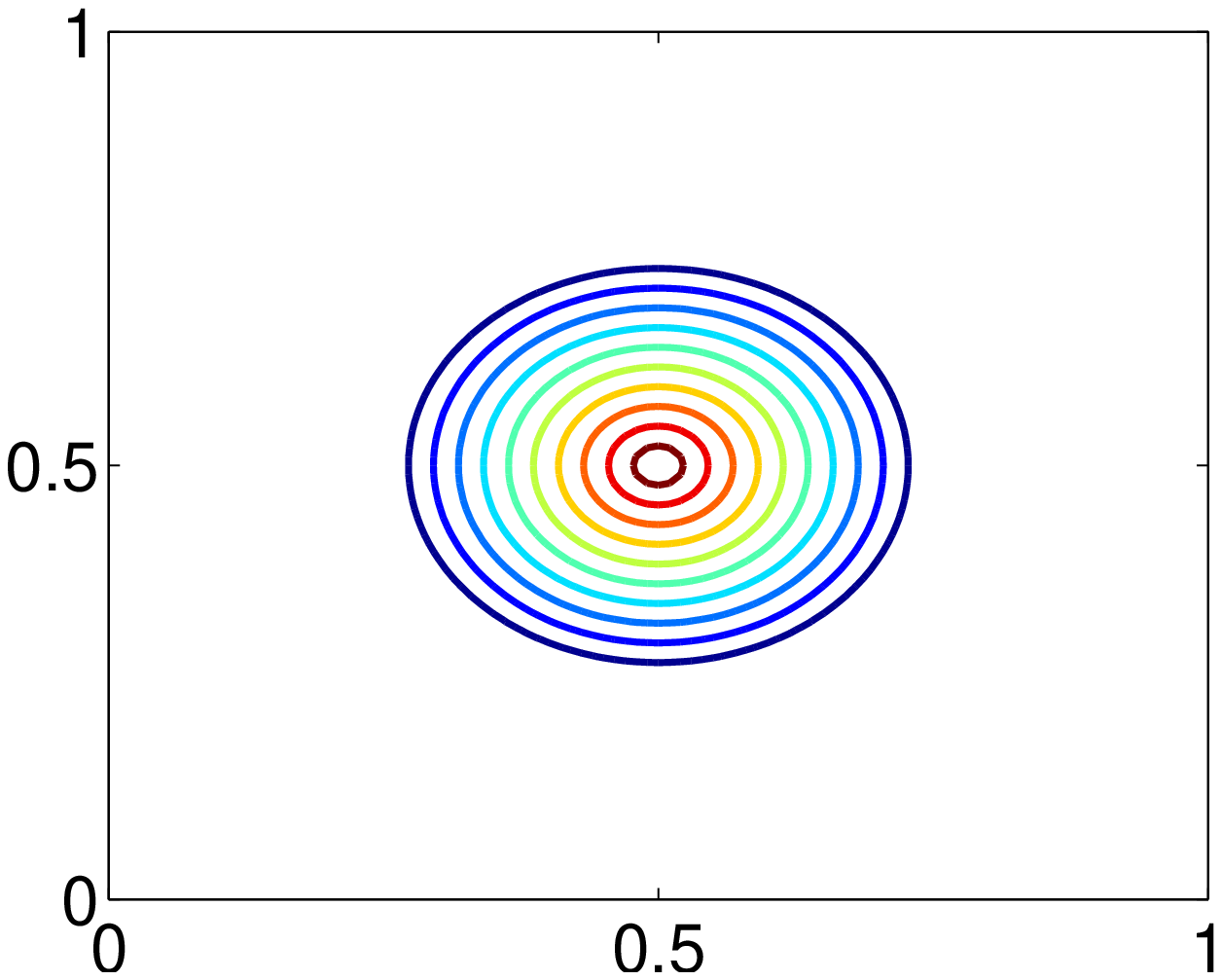}}
\caption{Contour lines of the test functions on  $101 \times 101$ equally spaced mesh in the unite square. }
\label{fig:test}
\end{figure}
We use three examples as the density function, the first one is the \texttt{franke} function in Matlab.
\begin{linenomath}
\begin{align}
\rho_1(x,y)=franke(x,y)= &.75\exp(-((9x-2)^2 + (9y-2)^2)/4) + \nonumber \\& .75\exp(-((9x+1).^2)/49 - (9y+1)/10) +  \nonumber  \\
             &.5\exp(-((9x-7)^2 + (9y-3)^2)/4) -  \nonumber \\& .2\exp(-(9x-4)^2 - (9y-7)^2).
\end{align}
\end{linenomath}
The second example is a continuous function with sharp gradient. It is a shock like function.
\begin{linenomath}
\begin{equation}
\rho_2(x,y)=\tanh(y-15x+6)+1.2.
\end{equation}
\end{linenomath}
The third example is the peak function used in \cite{margolin03}
\begin{linenomath}
\begin{equation}
\rho_3(x,y)=
\begin{cases}
0, & \sqrt{(x-0.5)^2+(y-0.5)^2} >0.25; \\
\max\{ 0.001, 4(0.25-r) \}, &  \sqrt{(x-0.5)^2+(y-0.5)^2} \leq 0.25.
\end{cases}
\end{equation}
\end{linenomath}
Fig.~\ref{fig:test} illustrates the contour lines of the test functions. The initial mass of on the old cell are calculated by a fourth order quadrature, the remapped density function is calculated by the exact FB/DC method: the swept region is calculated exactly. The density are assumed to be a piecewise constant on each old cell. Since the swept/flux area are calculated exactly. The remapped error only depends on the approximation scheme to the density function on the old cell. The $L_\infty$ norm
\begin{linenomath}
\begin{equation}
\|\rho^{*} -\rho \|_{\infty} =\max_{i_j}| \rho^{h}_{i+\frac{1}{2},j+\frac{1}{2}}- \rho(x_{i+\frac{1}{2},j+\frac{1}{2}}) |
\end{equation}
\end{linenomath}
is expected in the order of $\mathcal{O}(h)$ for piecewise constant approximation to the density function on the old mesh. While the $L_1$ norm
\begin{linenomath}
$$
\|m^*-m \|_{\infty}=\max_{i,j} | (\rho^{h}_{i+\frac{1}{2},j+\frac{1}{2}}- \rho(x_{i+\frac{1}{2},j+\frac{1}{2}}))\mu(C_{i+\frac{1}{2}, j+\frac{1}{2}}) |.
$$
\end{linenomath}
is expected to be in the order of $\mathcal{O}(h^3)$.  We don't use the $L_1$ normal like in other publications, because when plot the convergence curve in in the same figure, the $L_1$ norm and the $L_\infty$ norm for the density function converges also the same rate.
 Fig.~\ref{fig:error} demonstrates the convergence of the remmapping error based on the piecewise constant reconstruction of the density function in the old mesh.

\begin{figure}
\centering
\subfloat[ Tensor grids ]{\includegraphics[width=0.45\textwidth]{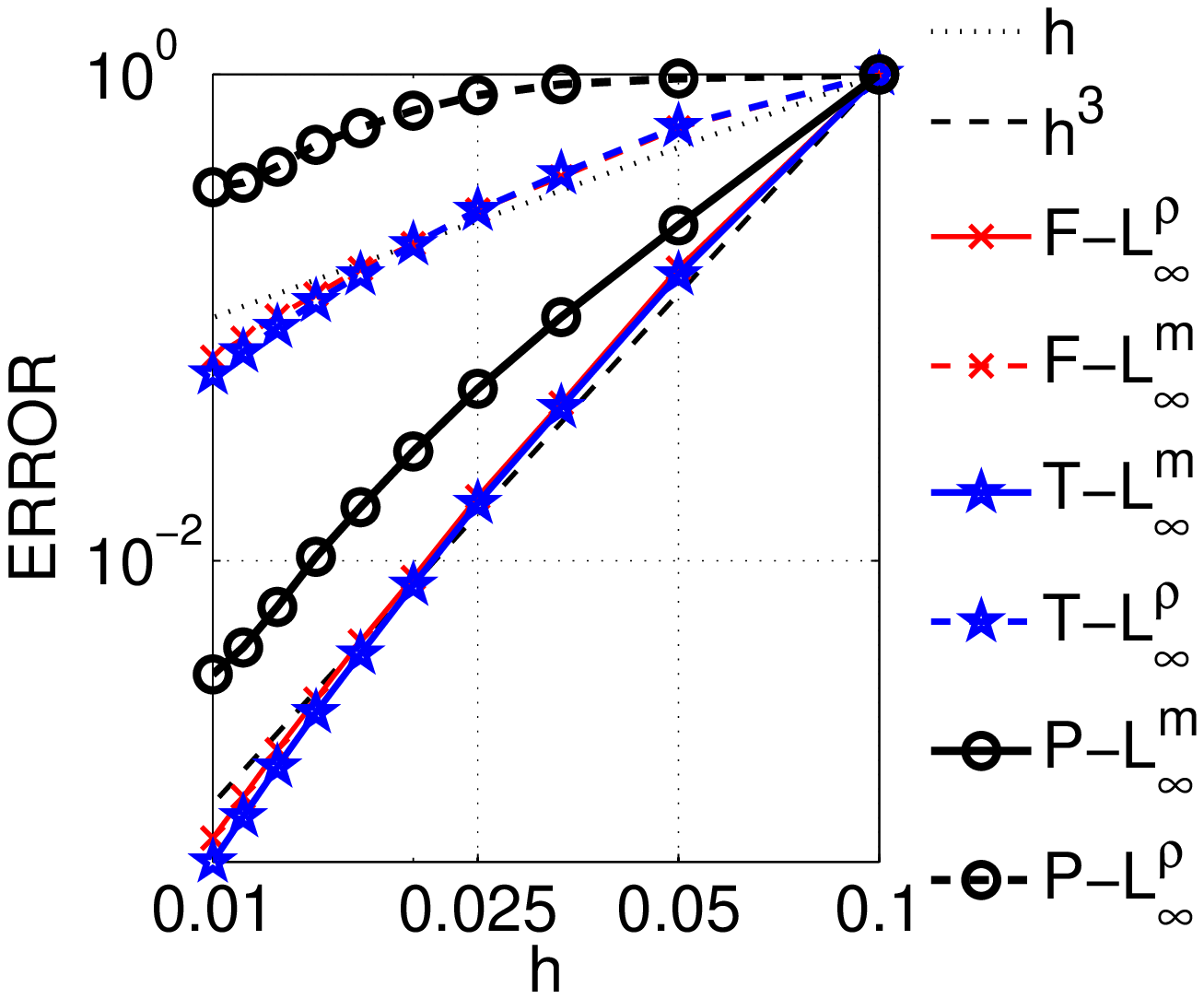}}
\subfloat[ Random grids ]{\includegraphics[width=0.45\textwidth]{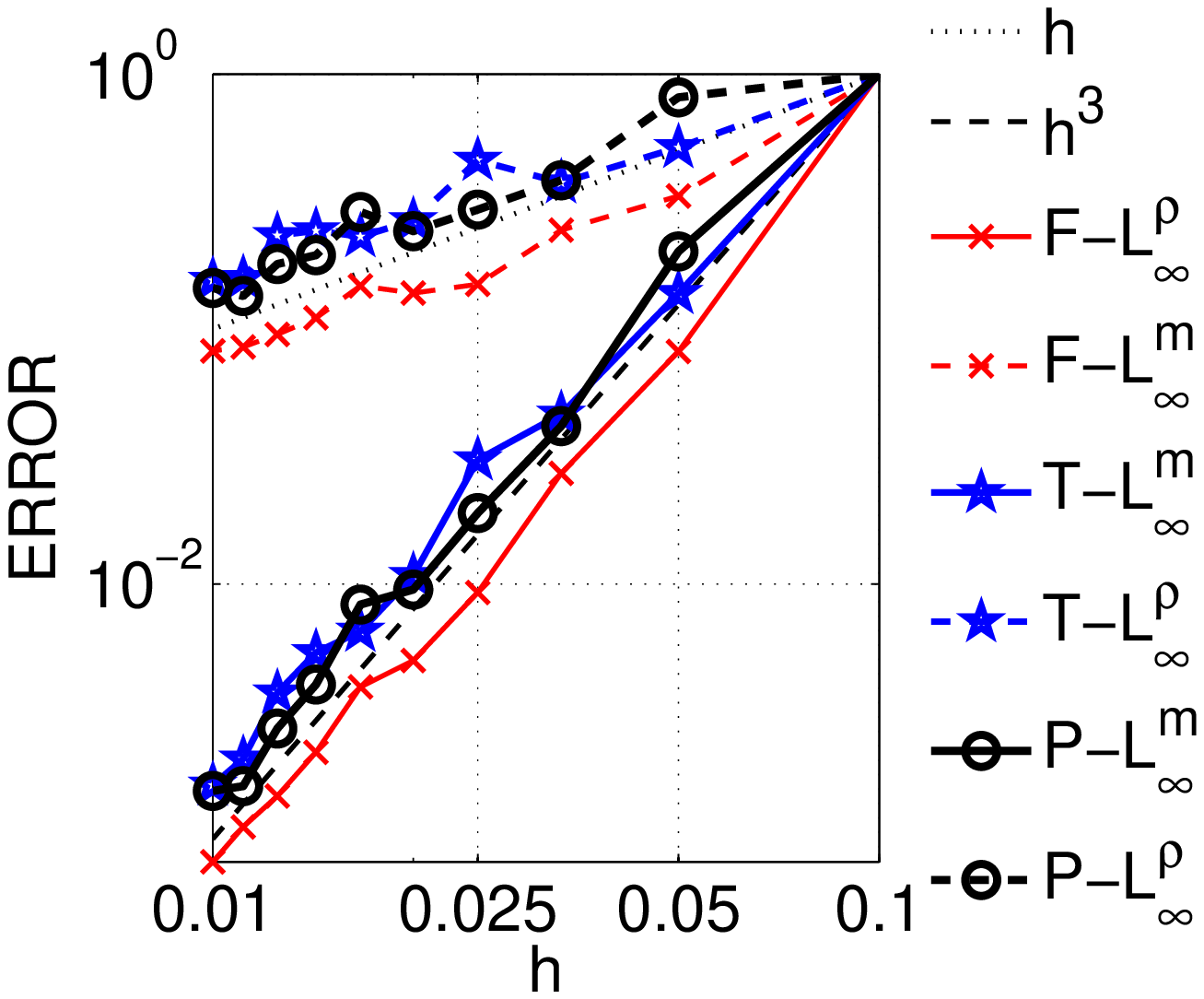}}
\caption{Convergence of the error between the remapped density functions and the true density functions.$F$: the Franke function, P: the peak function, and T: tanh function. The error are scaled by the level on the coarse level.}
\label{fig:error}
\end{figure}
\begin{figure}
\centering
\subfloat[ franke ]{\includegraphics[width=0.32\textwidth]{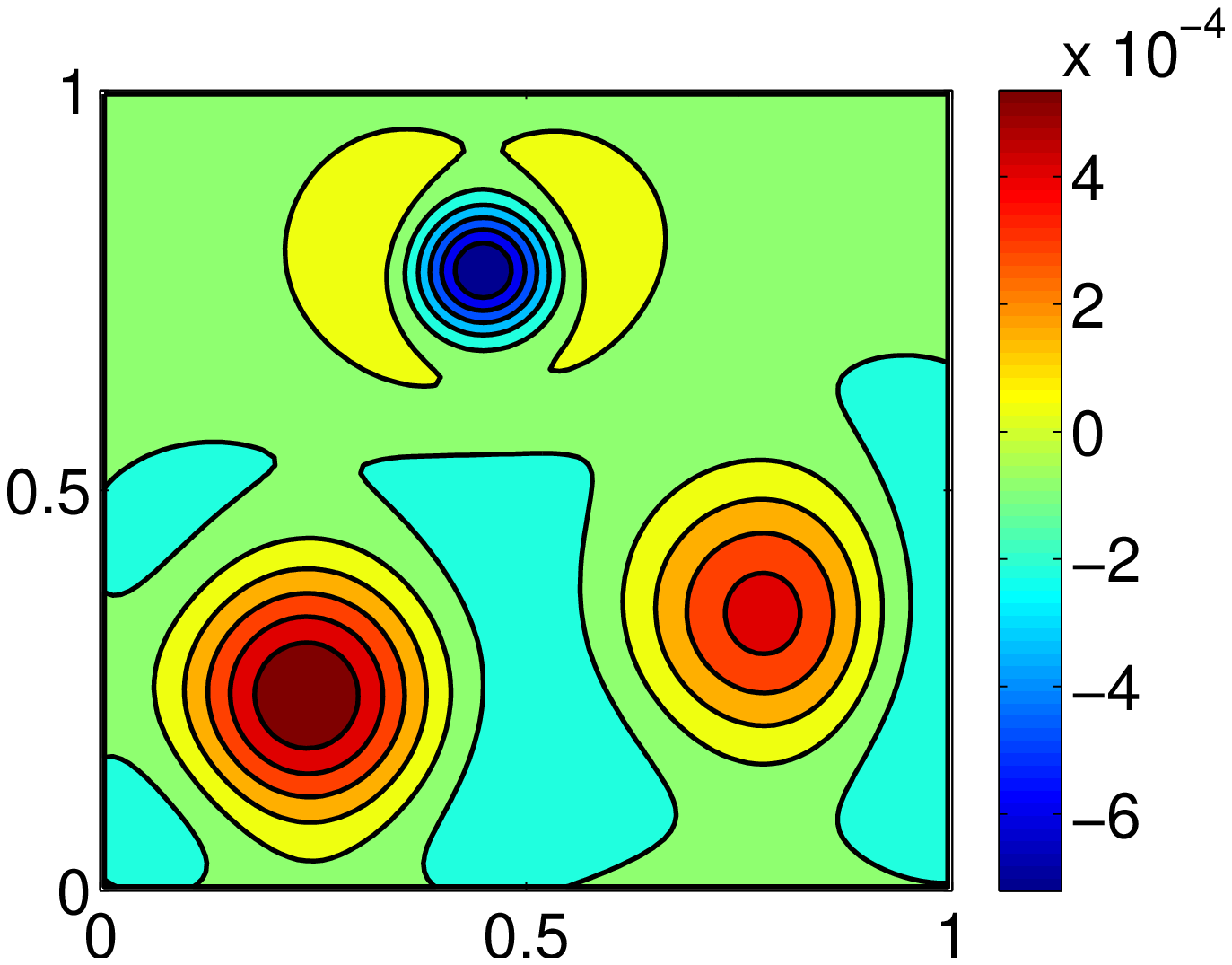}}
\subfloat[ tanh ]{\includegraphics[width=0.32\textwidth]{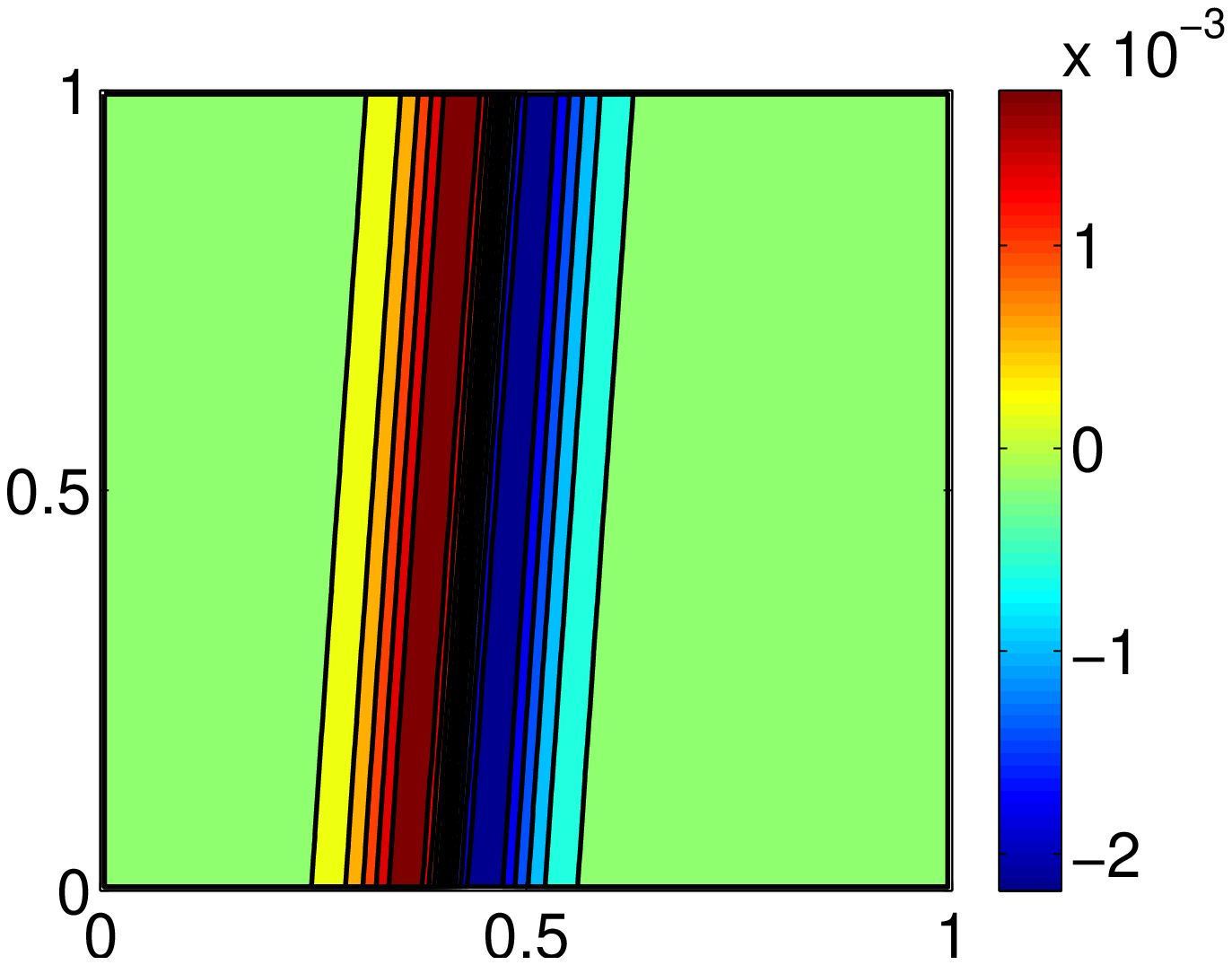}}
\subfloat[ peak ]{\includegraphics[width=0.32\textwidth]{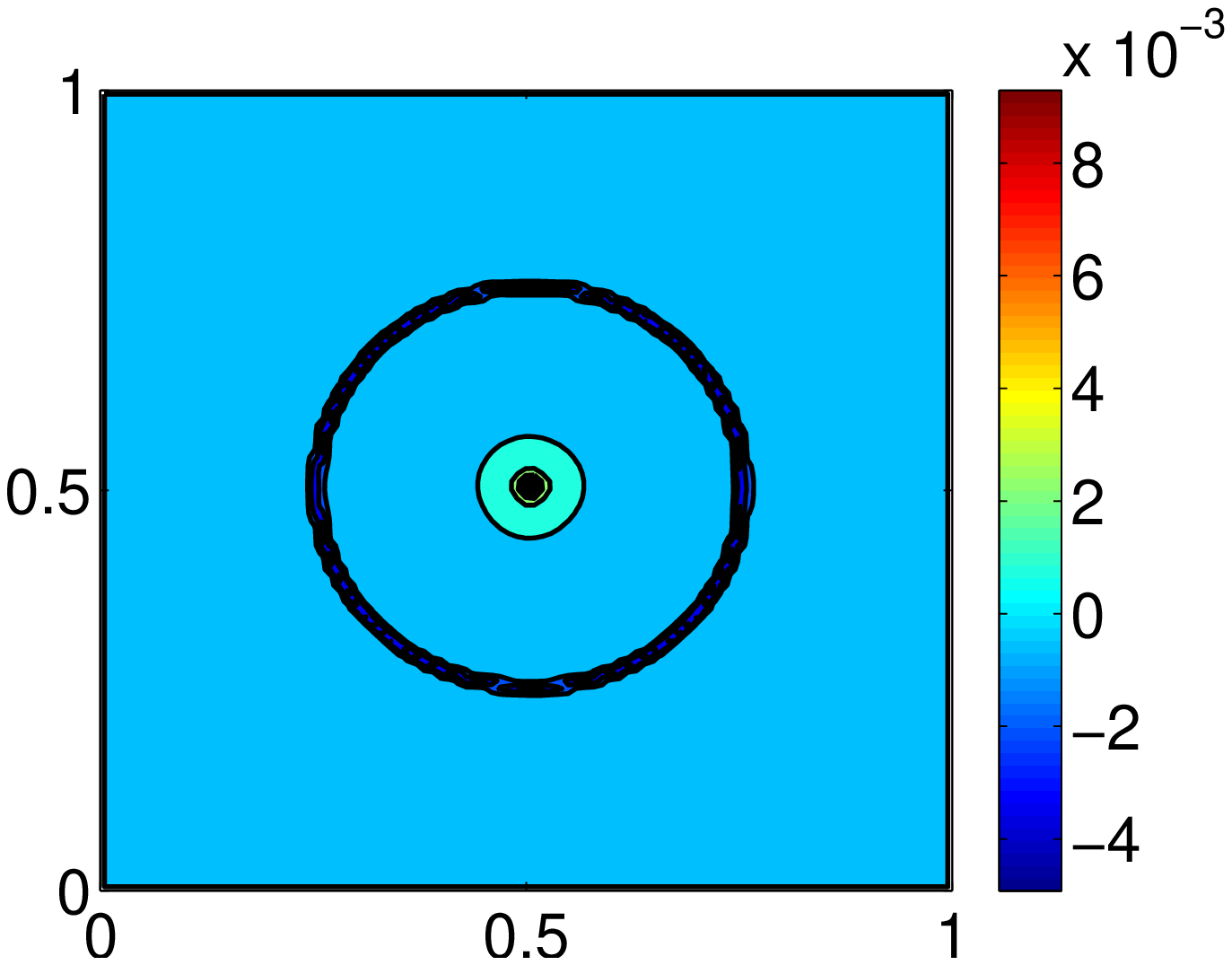}}
\caption{Remapped error of the density function on $101 \times 101$ tensor grids.}
\end{figure}

\begin{figure}
\centering
\subfloat[ franke ]{\includegraphics[width=0.31\textwidth]{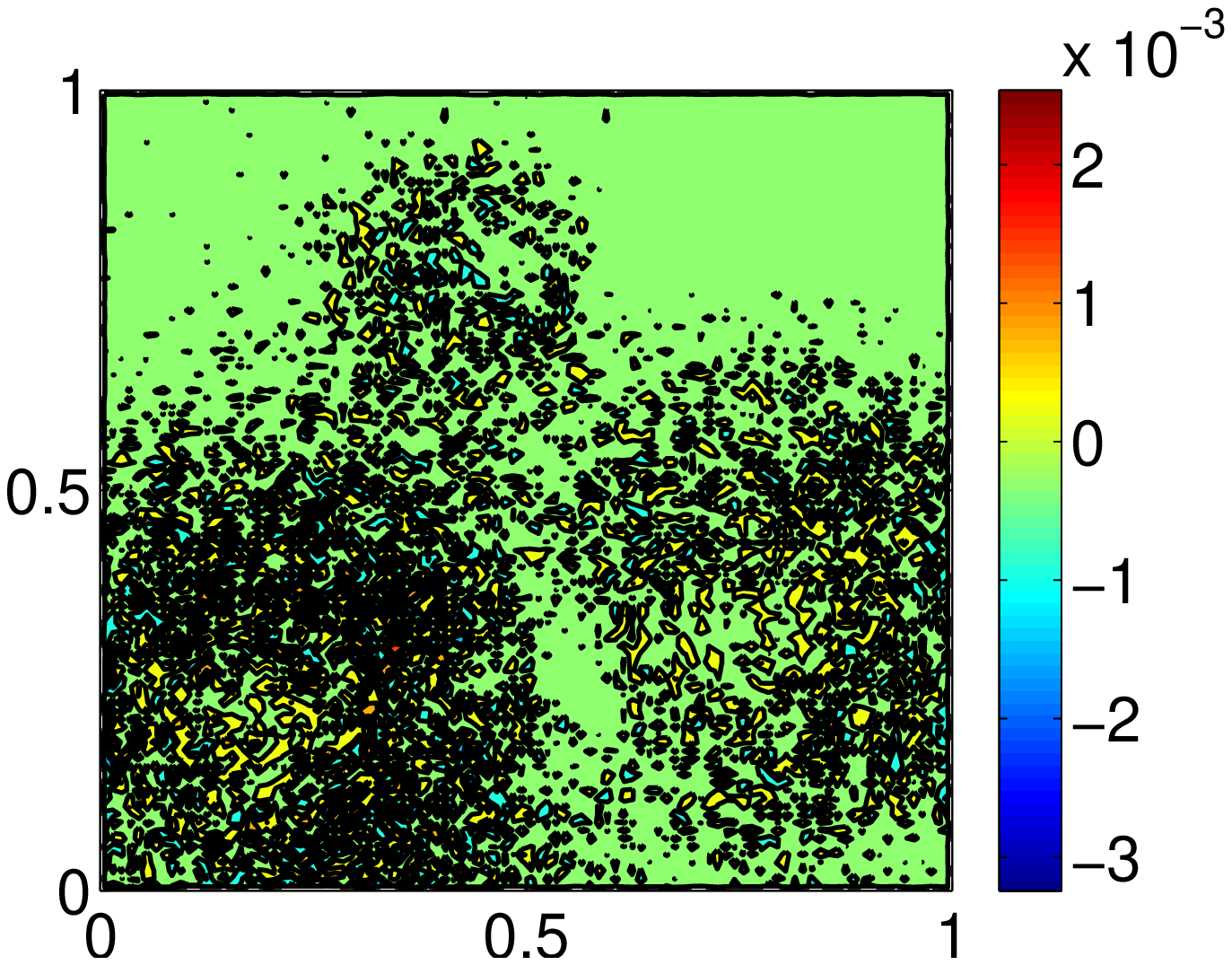}}
\subfloat[ tanh ]{\includegraphics[width=0.34\textwidth]{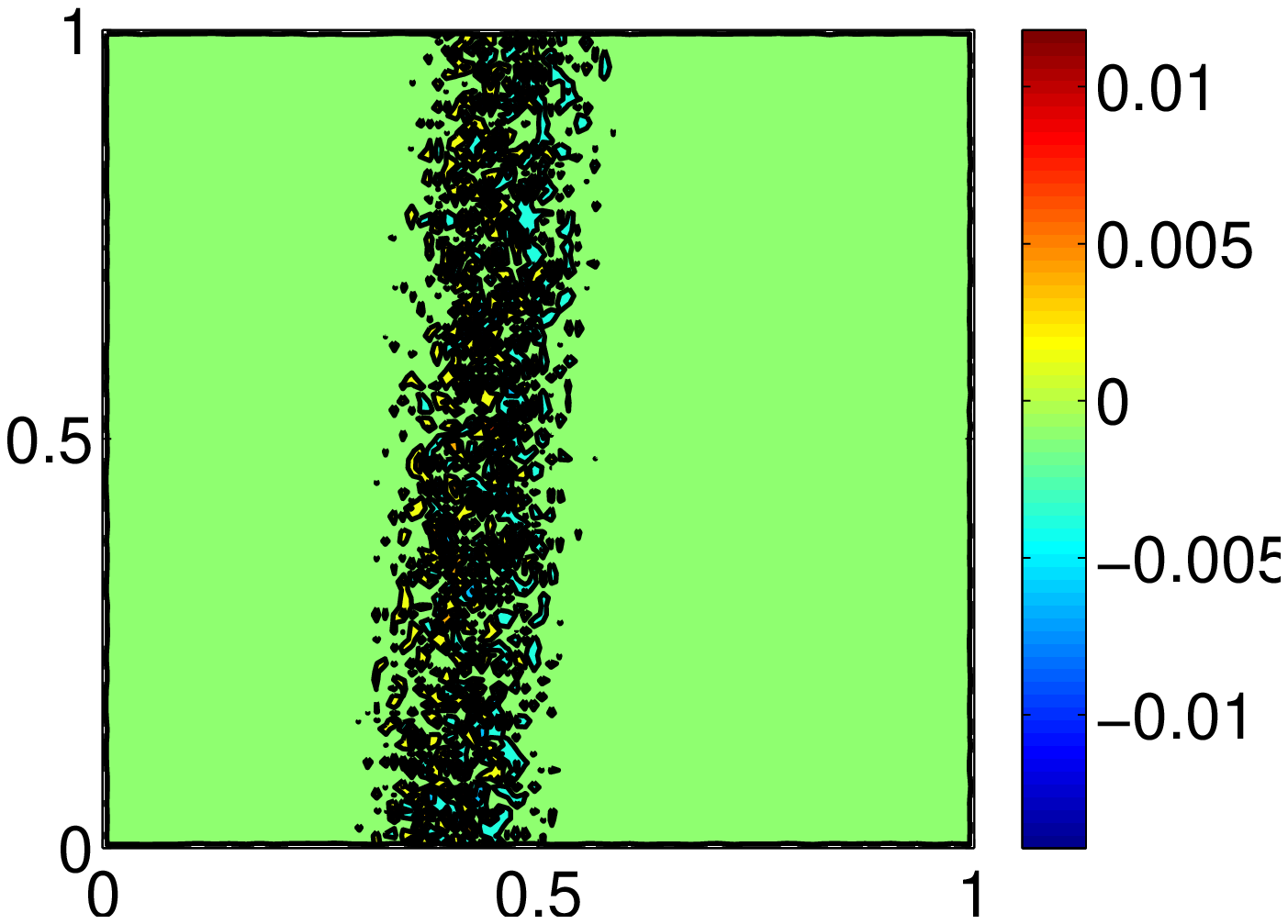}}
\subfloat[ peak ]{\includegraphics[width=0.31\textwidth]{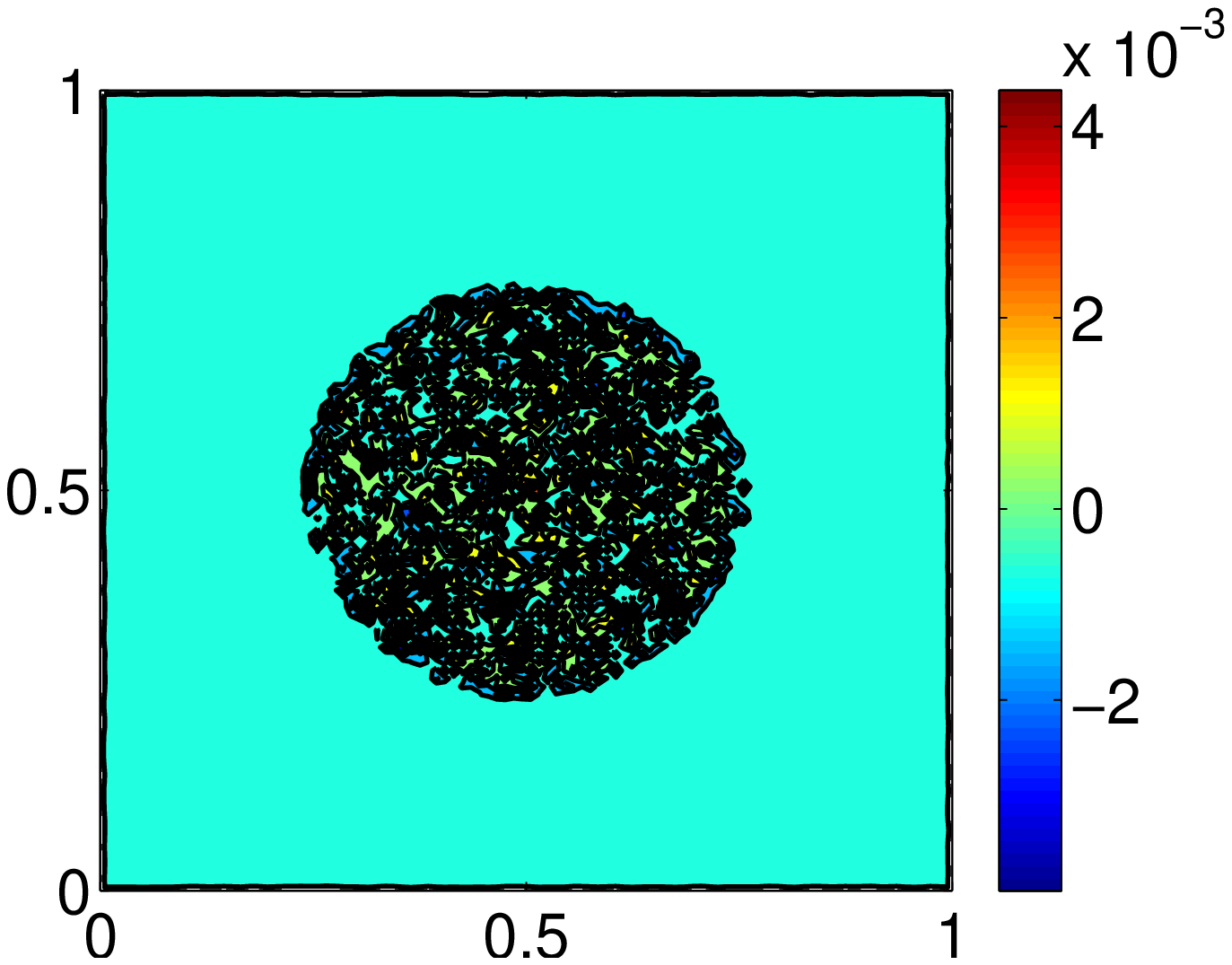}}
\caption{Remapped error of the density function on $101 \times 101$ random grinds.}
\end{figure}

\begin{figure}
\subfloat[ ]{\includegraphics[width=0.32\textwidth]{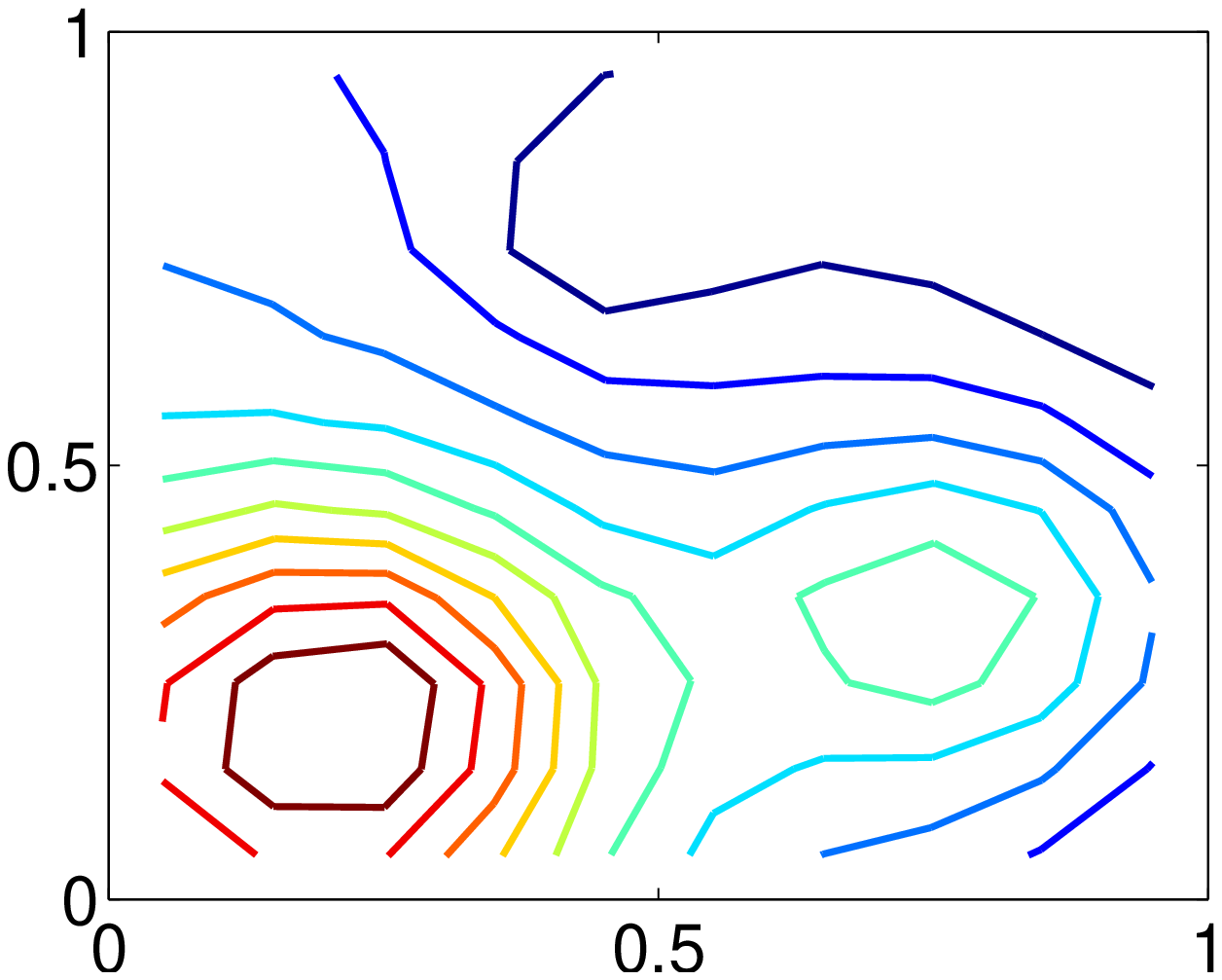}}
\subfloat[ ]{\includegraphics[width=0.32\textwidth]{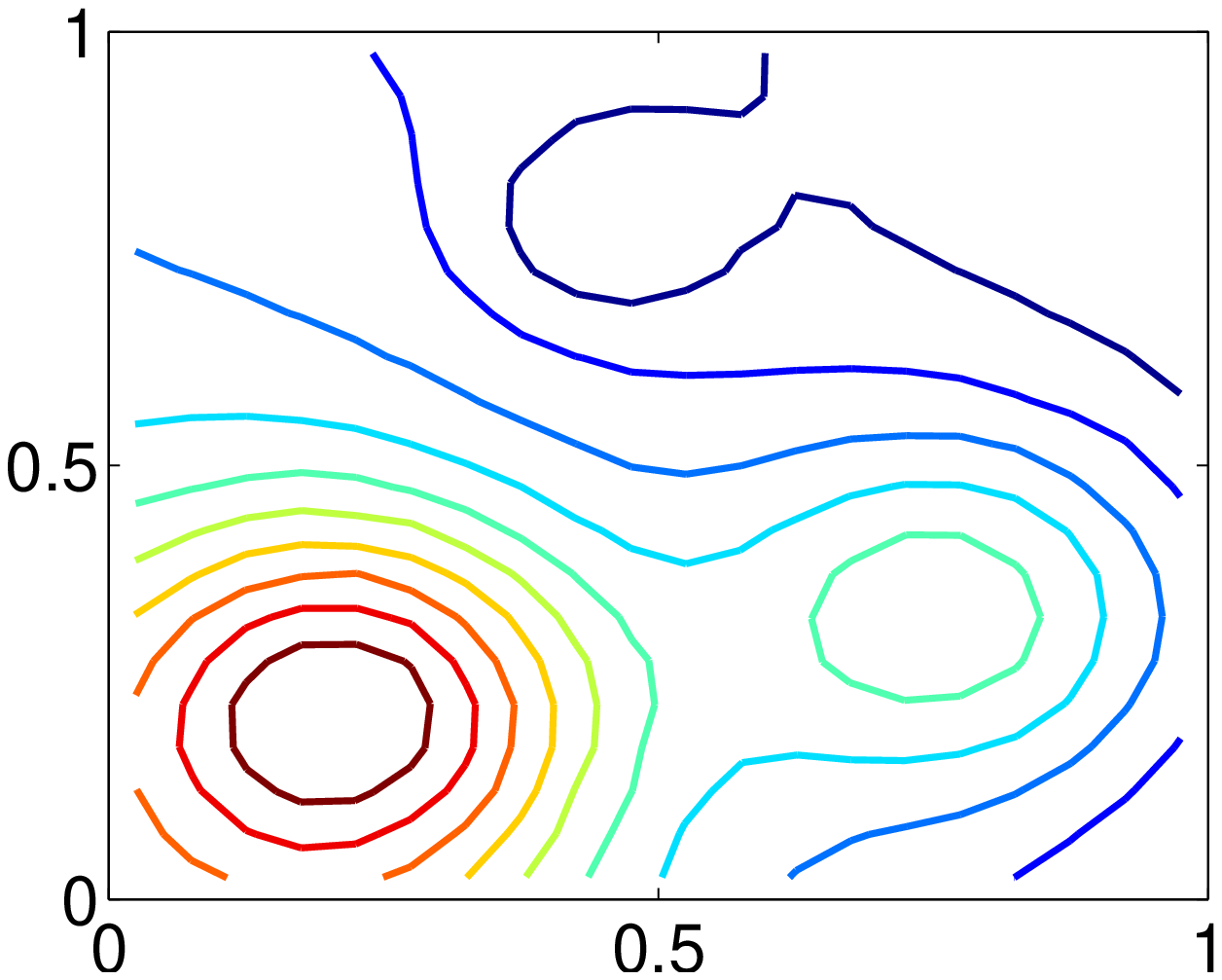}}
\subfloat[ ]{\includegraphics[width=0.32\textwidth]{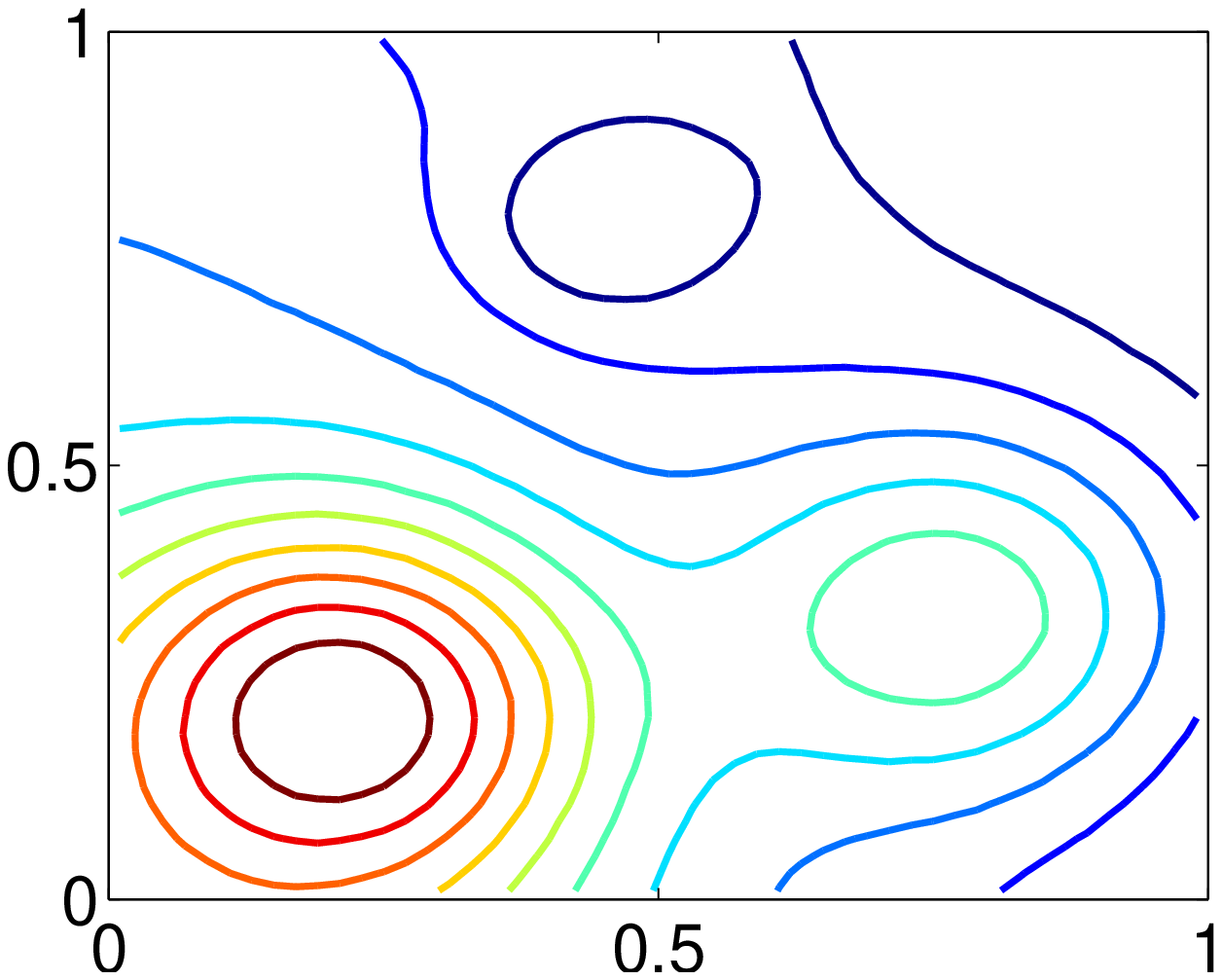}}

\subfloat[ ]{\includegraphics[width=0.32\textwidth]{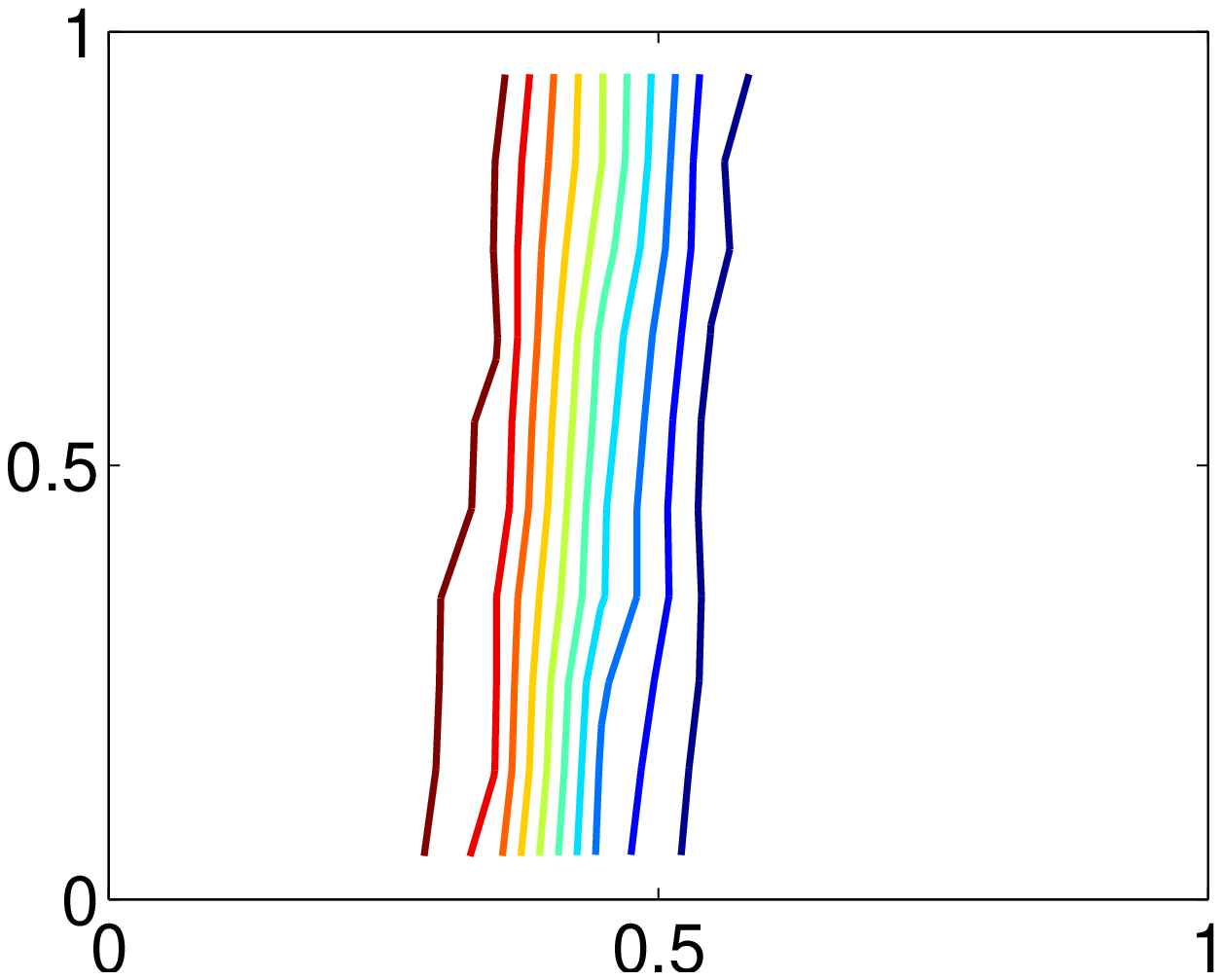}}
\subfloat[ ]{\includegraphics[width=0.32\textwidth]{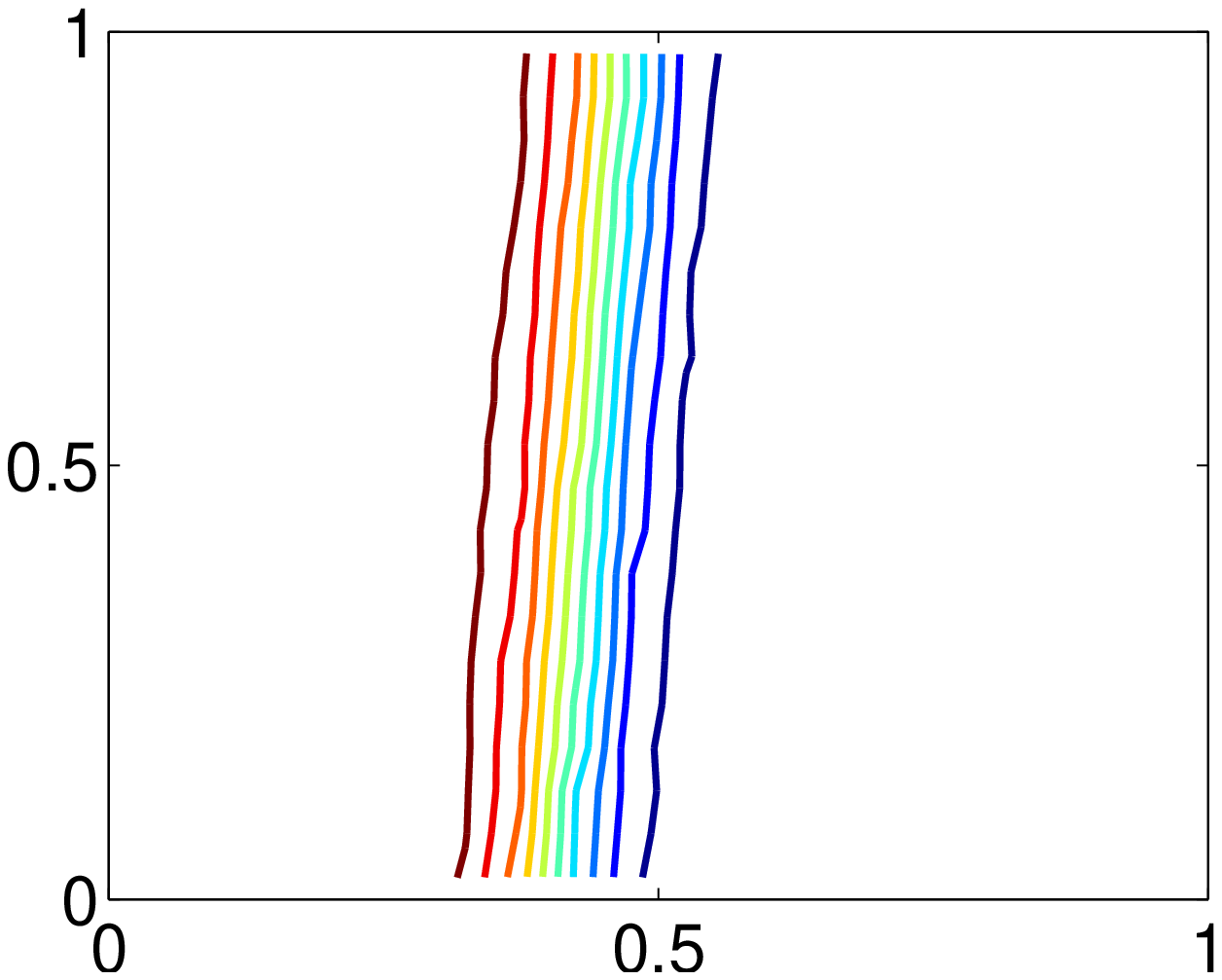}}
\subfloat[ ]{\includegraphics[width=0.32\textwidth]{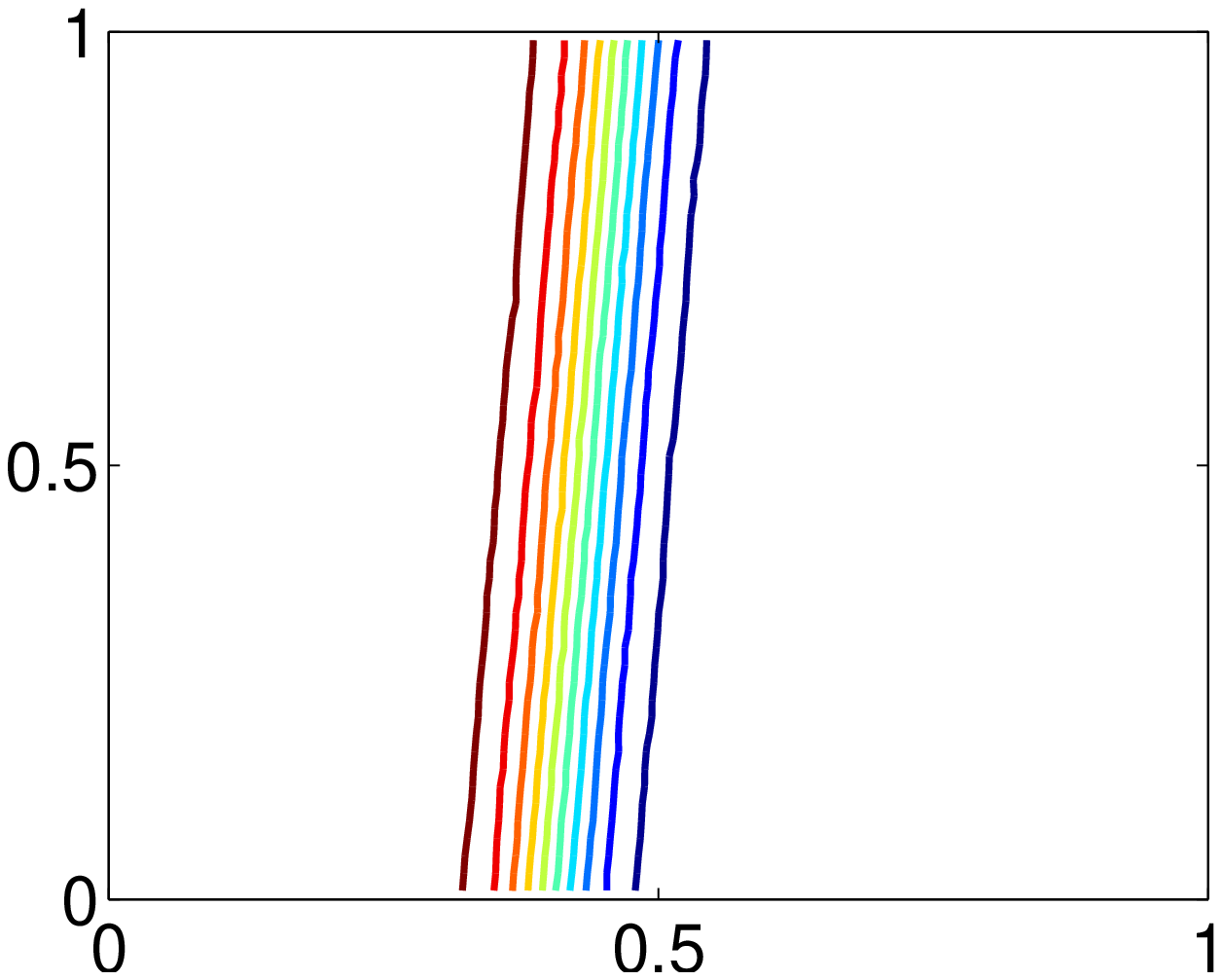}}

\subfloat[ ]{\includegraphics[width=0.32\textwidth]{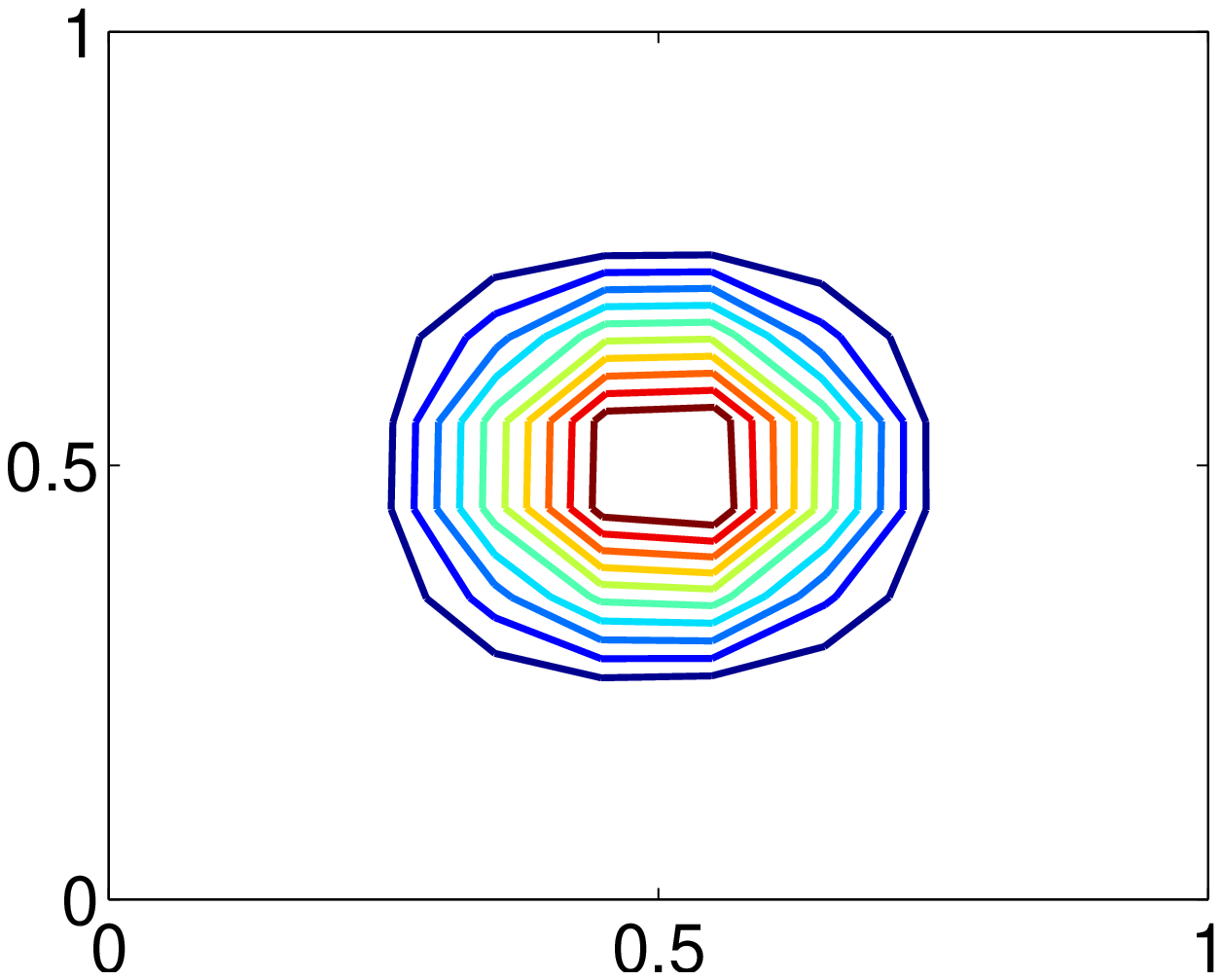}}
\subfloat[ ]{\includegraphics[width=0.32\textwidth]{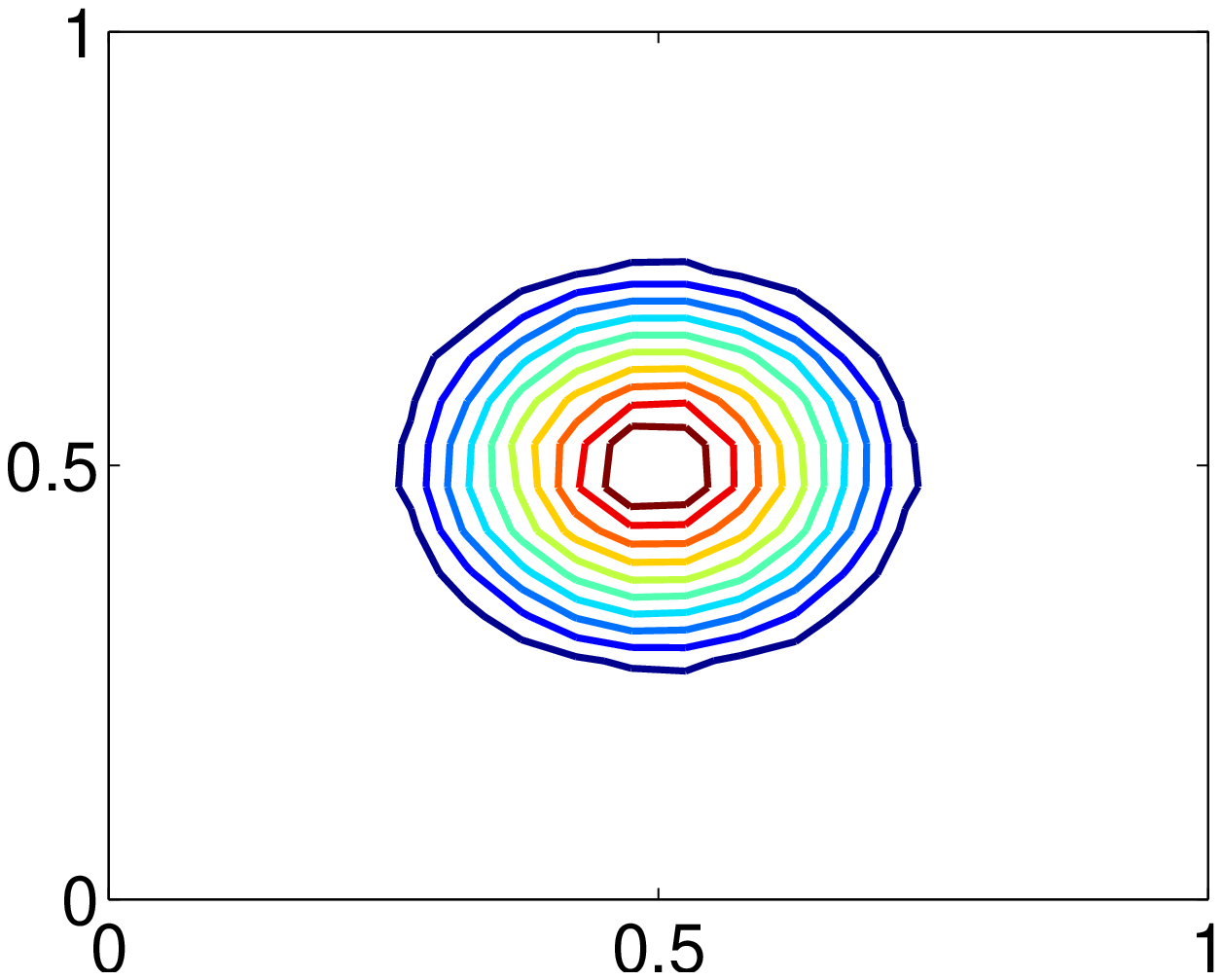}}
\subfloat[ ]{\includegraphics[width=0.32\textwidth]{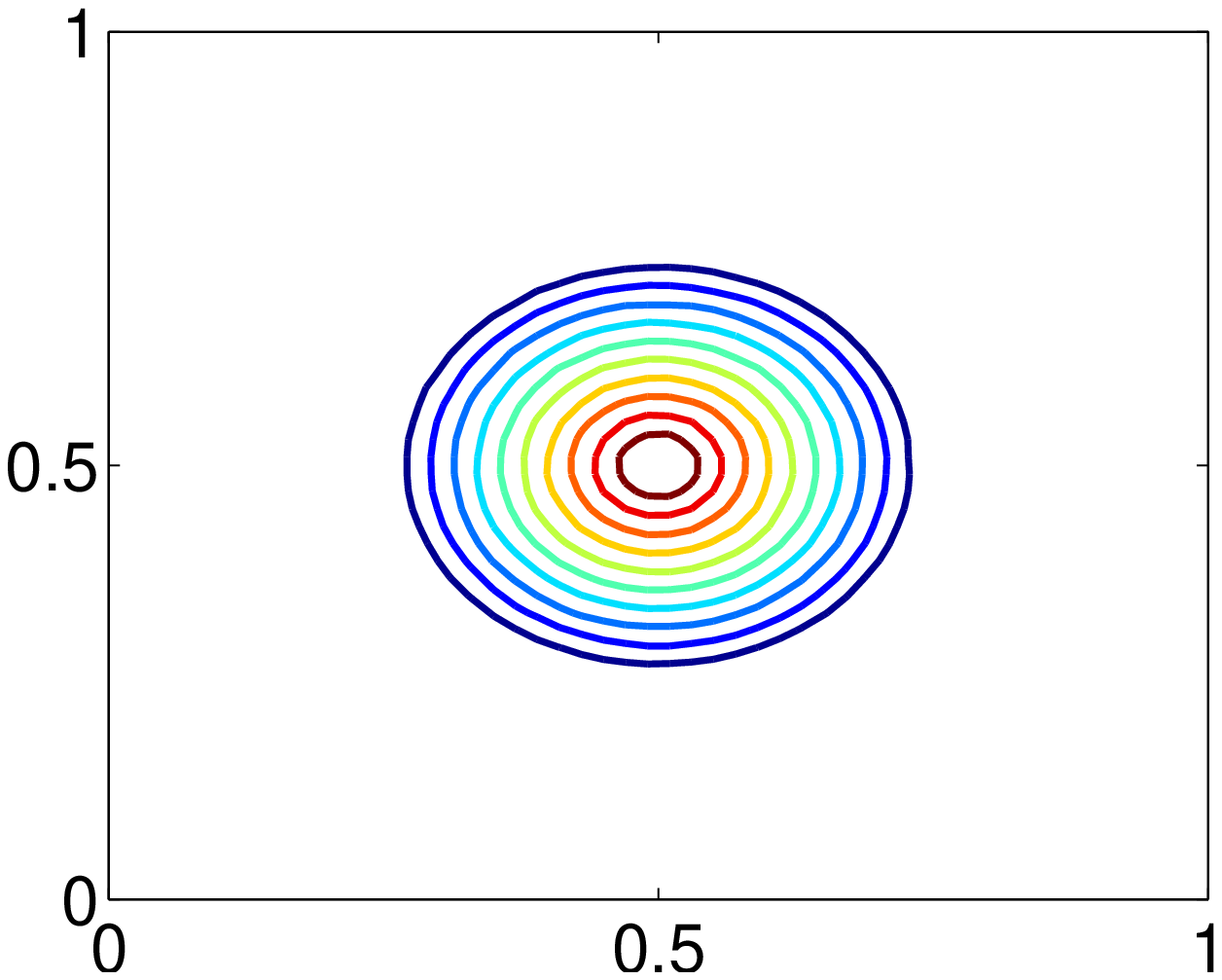}}

\caption{Contour lines of the remapped density functions on $nx \times ny $ random grids. $nx=ny=11$(left), 21(middle) and 51(right).}
\end{figure}

\section{Discussion}
Computing the overlapped region of a Lagrangian mesh(old mesh) and a rezoned mesh(new mesh) and reconstruction (the density or flux) on the Lagrangian mesh are two aspects of a remapping scheme. According to the way how the overlapped (vertical) strips are divided, a remmapping scheme can be either an CIB/DC approach or an FB/DC approach. Both approach are used in practice.  The CIB/DC methods is based on pure geometric or set operation. It is conceptually simple, however Fig.\ref{fig:branch} shows the complexity of computing all the intersections of the two quadrilateral mesh of the same connectivity, it requires 98 programming cases for the non-degenerate intersections to cover all the possible intersection cases which are more than 256.  On contrast, the FB/DC approach is based on the physical law, flux exchanges. Mathematically, this is based on the Green formula and the line integral formula. Fig. \ref{fig:case} shows for the FB/DC method, it only requires 34 programming cases for the non-degenerate intersections. This approach is attractive for the case when the two mesh share the same connectivity. Here we present method to calculate fluxing/swept area or the local swap area. They are calculated exact. The classification on the intersection types can help us to identify the possible degenerate cases, this is convenient when develop a robust remapping procedure.  Based on the Fact 3, we know there are at least 256 possible ways for a new cell to intersect with an old tessellation. But according to Fig.\ref{fig:case}, we can tell there are more cases than 256. What is the exactly possibilities?  This problem remains open as far as we know.


\bibliographystyle{spmpsci}      


\end{document}